\newcommand{\vxi}{{\boldsymbol{\xi}}}
\newcommand{\va}{{\mathbf{a}}}
\newcommand{\vb}{{\mathbf{b}}}
\newcommand{\vc}{{\mathbf{c}}}
\newcommand{\vf}{{\mathbf{f}}}
\newcommand{\vg}{{\mathbf{g}}}
\newcommand{\vl}{{\mathbf{l}}}
\newcommand{\vp}{{\mathbf{p}}}
\newcommand{\vu}{{\mathbf{u}}}
\newcommand{\vv}{{\mathbf{v}}}
\newcommand{\vw}{{\mathbf{w}}}
\newcommand{\vx}{{\mathbf{x}}}
\newcommand{\vy}{{\mathbf{y}}}
\newcommand{\vz}{{\mathbf{z}}}
\newcommand{\zz}{^{\top}}
\newcommand{\vA}{{\mathbf{A}}}
\newcommand{\vM}{{\mathbf{M}}}
\newcommand{\vQ}{{\mathbf{Q}}}
\newcommand{\cD}{{\mathcal{D}}}
\newcommand{\cL}{{\mathcal{L}}}
\newcommand{\cM}{{\mathcal{M}}}
\newcommand{\cN}{{\mathcal{N}}}
\newcommand{\cO}{{\mathcal{O}}}
\newcommand{\cP}{{\mathcal{P}}}
\newcommand{\cU}{{\mathcal{U}}}
\newcommand{\cX}{{\mathcal{X}}}
\newcommand{\vareps}{\varepsilon}
\newcommand{\RR}{\mathbb{R}} 
\newcommand{\vzero}{\mathbf{0}} 
\newcommand{\dist}{\mathrm{dist}}    
\newcommand{\prox}{{\mathbf{prox}}} 
\newcommand{\dom}{{\mathrm{dom}}} 
\newcommand{\st}{\mbox{ s.t. }}
\DeclareMathOperator*{\argmin}{arg\,min} 
\newcommand{\bc}{\begin{center}}
\newcommand{\ec}{\end{center}}
\newcommand{\bdm}{\begin{displaymath}}
\newcommand{\edm}{\end{displaymath}}
\newcommand{\beq}{\begin{equation}}
\newcommand{\eeq}{\end{equation}}
\newcommand{\bfl}{\begin{flushleft}}
\newcommand{\efl}{\end{flushleft}}
\newcommand{\bt}{\begin{tabbing}}
\newcommand{\et}{\end{tabbing}}
\newcommand{\beqn}{\begin{eqnarray}}
\newcommand{\eeqn}{\end{eqnarray}}
\newcommand{\beqs}{\begin{align*}} 
\newcommand{\eeqs}{\end{align*}}  
\newtheorem{assumption}{Assumption}
\begin{document}
\allowdisplaybreaks[4]

\title{Damped Proximal Augmented Lagrangian Method for weakly-Convex Problems with Convex Constraints}

 
\author{ Hari Dahal, Wei Liu, Yangyang Xu}

\institute{Hari Dahal \at
              Department of Mathematical Sciences, Rensselaer Polytechnic Institute, Troy, NY 12180 \\
              \email{dahalh@rpi.edu}          
           \and
           Wei Liu \at
              Department of Mathematical Sciences, Rensselaer Polytechnic Institute, Troy, NY 12180 \\
              \email{lwdsdqqb@gmail.com}
              \and
            Yangyang Xu \at 
            Department of Mathematical Sciences, Rensselaer Polytechnic Institute, Troy, NY 12180 \\
              \email{xuy21@rpi.edu}
              }

\date{Received: date / Accepted: }

\maketitle

\begin{abstract}
We present a damped proximal augmented Lagrangian method (DPALM) for solving problems with a weakly-convex objective and convex linear/non-linear constraints. Instead of taking a full stepsize, DPALM adopts a (possibly) damped dual stepsize. 
We show that DPALM can produce a (near) $\vareps$-KKT point within $\cO(\vareps^{-2})$ outer iterations  
if each DPALM subproblem is solved to a proper accuracy. In addition, we establish overall oracle complexity {(i.e., total number of evaluations of function values and (sub)gradients)} of DPALM when the objective is either a regularized smooth function or in a regularized compositional form. For the former case, DPALM achieves a complexity of $\widetilde{\mathcal{O}}\left(\varepsilon^{-2.5} \right)$ to produce an $\varepsilon$-KKT point by applying an accelerated proximal gradient (APG) method to each DPALM subproblem{\footnote{Throughout this paper, we use $\widetilde\cO(\cdot)$ to hide a logarithmic term.}}. For the latter case, the complexity of DPALM is $\widetilde{\mathcal{O}}\left(\varepsilon^{-3} \right)$ to produce a near $\varepsilon$-KKT point by using an {APG} {method} to solve a Moreau-envelope smoothed version of each subproblem.  
Our outer iteration complexity and the overall complexity either generalize existing best ones from unconstrained or linear-constrained problems  to convex-constrained ones, or improve over the best-known results on solving the same-structured problems. Furthermore, numerical experiments on linearly/quadratically constrained non-convex quadratic programs and linear-constrained robust nonlinear least squares are conducted to demonstrate the empirical efficiency of the proposed DPALM over several state-of-the-art methods {and a specialized solver}.
\keywords{weakly-convex optimization \and first-order method \and proximal augmented Lagrangian method \and functional constrained problem}
\subclass{49M05 \and 49M37 \and 90C06 \and 90C30 \and 90C26 \and 90C60}
\end{abstract}
	
	\section{Introduction}
	Given the rapid increase of data volume in modern applications, there has been a substantial surge of interest in designing first-order methods (FOMs). Traditionally, a significant portion of research in optimization has focused on convex problems. However, there has been a noticeable and accelerating shift towards the investigation of non-convex optimization during the past decade. This trend is primarily attributable to the practical applications and the growing recognition that many contemporary optimization challenges indeed fall within the category of non-convex problems.
	
	In this paper, we consider to design new FOMs for non-convex constrained optimization {problems} in the form of
	\begin{equation}\label{problem:main}
		\begin{aligned}
			&\min_{ \vx \in \mathbb{R}^d} \,\, F( \vx):=f(\vx) + h(\vx),\\
                &\st \,\, \vA \vx = \vb, \quad \vg( \vx):=[g_1( \vx), g_2(\vx), \ldots, g_m( \vx)] \leq \mathbf{0},
		\end{aligned}
		\tag{P}
	\end{equation}
	where $\vA \in \mathbb{R}^{n \times d}, \vb \in \mathbb{R}^n$, $F$ is continuous on its domain $\mathcal{X}:= \mathrm{dom}(F)$, $h$ is closed convex, and
	$g_i: \mathbb{R}^d \rightarrow \mathbb{R}$ is closed convex for each $i \in [m] $ with $[m] := \{1, 2, \ldots, m\}$. We will assume that $h$ is simple and admits an easy proximal mapping, each $g_i$ is smooth 
	on an open set containing $\mathcal{X}$, and 
	$f$ is $\rho$-weakly convex and possibly non-smooth; see Definition~\ref{def:weak-cvx} below. 
	
	The problem~\eqref{problem:main} is rather general and has many interesting applications, such as non-convex quadratic programs with linear and/or nonlinear constraints, reformulation of the nonnegative matrix completion by variable splitting \cite{xu2012alternating}, hyperspectral image denoising \cite{chen2021hyperspectral}, classification problem with ROC-based constraints~\cite{huang2023single}, 
	and the Neyman-Pearson classification~\cite{rigollet2011neyman}. 
	 
	In the realm of non-convex optimization,  locating a global optimizer is usually a computationally intractable task \cite{danilova2022recent}. As a practical alternative, the goal is often directed towards identifying a stationary point. 
On solving~\eqref{problem:main}, we aim at finding a (near) $\varepsilon$-KKT point for a given $\varepsilon>0$; 
see Definition~\ref{def: epskkt}.

	\subsection{Algorithmic Framework} \label{sec: AlgFrame}
The first-order method (FOM) we design is based on the framework of an inexact damped proximal augmented Lagrangian method (DPALM), {in which a damped step size is used when updating the Lagrange multipliers.}
	The augmented Lagrangian (AL) function of~\eqref{problem:main} is 
	\begin{equation}\label{eq:aug-fun}
		\cL_\beta( \vx;\vy, \vz) = F( \vx)  + \vy^{\top}(\vA \vx-\vb)+ \frac{\beta}{2}\|\vA \vx-\vb\|^2 + \frac{\beta}{2}\left\|\left[ \vg( \vx)+\frac{ \vz}{\beta}\right]_+\right\|^2 - \frac{\| \vz\|^2}{2\beta},
	\end{equation}
	where $\beta>0$ is a penalty parameter, $[ \va]_+$ takes the component-wise positive part of a vector $ \va$, and $ \vy \in \mathbb{R}^n$ and $\vz \in \mathbb{R}^m$ are {Lagrange} multipliers. 
 Define {the} proximal AL function as 
		\begin{equation}\label{eq:aug-fun2}
			\widetilde{\cL}_{\beta}( \vx; \vy, \vz) = \cL_\beta( \vx;\vy, \vz) + \rho \| \vx-\vx^{k}\|^2,
		\end{equation}
		which is $\rho$-strongly convex due to the $\rho$-weak convexity of $F$. Notice that we use $\rho$ in~\eqref{eq:aug-fun2} for convenience. It can be any value that is strictly larger than~$\frac{\rho}{2}$.
	With 	$\widetilde{\cL}_{\beta}$, we present the DPALM framework in Alg.~\ref{alg:ialm}, where we adopt the convention of $\frac{v}{0}=+\infty$ for any $v>0$.
		
		\begin{algorithm}[htbp!]
			\caption{A Damped Proximal Augmented Lagrangian Method (DPALM) for 
			\eqref{problem:main}}\label{alg:ialm}
			\DontPrintSemicolon
				\textbf{Initialize} $ \vx^0\in\mathcal{X}=\dom(F)$, $\vy^0  \in \text{Range}(\vA)$, and $\vz^0\geq \vzero$. Let $k=0$.\; 
				\textbf{Choose} {two positive sequences} $\{\beta_k\}_{k\ge0}$ and$ \{v_k\}_{k\ge0}$.
				
				\While{a stopping criterion is not met,}{
					
					Step 1: Obtain an approximate solution $\vx^{k+1}$ of $\min_\vx \widetilde{\cL}_{\beta_k}( \vx; \vy^k, \vz^k )$ {such that \eqref{eq:approx-cond} in Remark \ref{rm:approx-sol} holds.}
					
					Step 2: Set $$\vy^{k+1} = \vy^k + \alpha_k(\vA \vx^{k+1}-\vb), \text{ and }\vz^{k+1} = \vz^k+ \alpha_k \max\left\{-\frac{ \vz^k}{\beta_k}, \vg\left(\vx^{k+1}\right)\right\}$$ with $\alpha_k := \min
                     \left\{\beta_k, \frac{v_k}{\sqrt{\| \vA \vx^{k+1}- \vb\|^2 + \|[\vg(\vx^{k+1})]_+ \|^2}}\right\}$.

				     Increase $k\gets k+1$.
				}
                
			\textbf{Output:} $\vx^{k}$.
		\end{algorithm}

In Alg.~\ref{alg:ialm}, we use a (possibly) damped stepsize $\alpha_k$ instead of a full stepsize~$\beta_k$ for the $\vy$- and $\vz$-updates. We mention here that our algorithm allows the choice of $\alpha_k=\beta_k$ but will have a higher worst-case complexity result; see our discussions in Remark~\ref{rem:1}. Below we give a remark on how to compute $\vx^{k+1}$.

\begin{remark}[approximate solution $\vx^{k+1}$]\label{rm:approx-sol}
The choice of $\vx^{k+1}$ in Step~1 of Alg.~\ref{alg:ialm} will be made clear in Sect.~\ref{Sec: ConvAnalysisII}, where we discuss three different cases of $f$: (i)~$f$ is smooth; (ii) $f$ is a composition of a convex function $l$ with a smooth mapping~$\vc$, i.e., $f = l\circ \vc$; (iii) $f$ is a general weakly-convex function. For all cases, $\vx^{k+1}$ satisfies the condition:
\begin{equation}\label{eq:approx-cond}\tag{ApproxCond}
\|\vx^{k+1} - \widehat{\vx}^{k+1} \| \leq \delta_k, \text{ with }\widehat{\vx}^{k+1} \in \cX \text{ and }\dist\left(\vzero, \partial_{\vx}\overline{\cL}_{\beta_k}(\widehat{\vx}^{k+1}; \vy^k, \vz^k)\right) \leq \varepsilon_k
\end{equation}
for some $\delta_k \geq 0$ and $\varepsilon_k\in (0, 1]$. Here, 
\begin{equation}\label{eq:barL-func}
\begin{aligned}
 \overline{\cL}_{\beta_k}(\vx; \vy^k, \vz^k) :=        &~   \overline{f}(\vx) + h(\vx) + (\vy^k)^\top (\vA\vx-\vb) + \frac{\beta_k}{2}\|\vA\vx-\vb \|^2 \\
  &~    + \frac{\beta_k}{2}\left\| \left[ \vg(\vx)
        + \frac{\vz^k}{\beta_k} \right]_+\right\|^2 - \frac{\|\vz^k\|^2}{2\beta_k} + \rho\|\vx-\vx^k\|^2,
    \end{aligned}
\end{equation} 
$ \overline{f}$ is $\rho$-weakly convex and either equals $f$ or is a smoothed approximation of $f$, and there is a constant $\overline{B}_f>0$ such that $\| \vxi_f \|\leq \overline{B}_f$ for any $\vxi_f\in\partial \overline{f}(\vx)$ and any $\vx\in \cX$. 
\end{remark}

		\subsection{Related Work}
        Significant efforts on FOMs for non-convex optimization problems have been dedicated to unconstrained or simple-constrained settings, as evidenced by a notable body of research~\cite{allen2017natasha, davis2018stochastic, davis2019stochastic, davis2019proximally, drusvyatskiy2019efficiency, ghadimi2013stochastic, ghadimi2016accelerated, lan2019accelerated, reddi2016stochastic, zhang2018convergence}. For problem~\eqref{problem:main}, these methods are inapplicable or inefficient as projecting onto the constraint set of~\eqref{problem:main} can be prohibitively expensive. Also, many existing FOMs for affine and/or nonlinear functional constrained optimization deal with the convex case; see, for example,~\cite{aybat2011first, lan2013iteration-penalty, lan2016iteration, liu2019nonergodic, lu2023iteration, necoara2019complexity, patrascu2017adaptive, xu2021iteration-ialm, bayandina2018mirror, lin2018level, tran2014primal, xu2021first-ALM, wei2018solving} for a deterministic case and~\cite{wei2018primal, lan2020algorithms, yu2017simple, xu2020primal} for a stochastic case.
		 {In what follows,} 
         we review existing FOMs for solving non-convex optimization problems in the form of~\eqref{problem:main}. 

FOMs for solving affinely constrained non-convex optimization problems, i.e., 	problem~\eqref{problem:main} with $\vg\equiv \vzero$, have been studied extensively such as in \cite{hajinezhad2019perturbed, goncalves2017convergence, hong2016decomposing, jiang2019structured, melo2017iteration, melo2024proximal, zhang2022global, zhang2020proximal}. 	
  Hajinezhad and Hong~\cite{hajinezhad2019perturbed} introduce a perturbed proximal primal-dual algorithm (PProx-PDA) with a complexity result of $\widetilde{\mathcal{O}}(\varepsilon^{-4})$.
The FOM in \cite{melo2024proximal} is based on the inexact proximal accelerated augmented Lagrangian (IPAAL) method.  
It can produce an $\varepsilon$-KKT point within {$\widetilde\cO\left(\varepsilon^{-2.5} \right)$ iterations under Slater's condition}. Kong et al.~\cite{kong2019complexity} give an FOM based on a quadratic penalty accelerated inexact proximal point method and show a complexity result of $\mathcal{O}\left(\varepsilon^{-3} \right)$. 
On a special class of affine-constrained non-convex optimization problems, where the regularizer is the indicator function of a polyhedral set, Zhang and Luo~\cite{zhang2020proximal, zhang2022global} introduce an FOM based on a proximal alternating direction method of multipliers (ADMM) and show that their method can generate an $\varepsilon$-KKT solution within $\mathcal{O}\left(\varepsilon^{-2} \right)$ iterations. 
The methods in \cite{goncalves2017convergence, hong2016decomposing, jiang2019structured, melo2017iteration} are all variants of ADMM and can produce an $\vareps$-KKT solution within $\mathcal{O}(\vareps^{-2})$ iterations under a certain assumption about the matrices in the affine constraint. Kong and Monteiro \cite{kong2023accelerated} introduce an accelerated inexact dampened ALM for non-convex composite problems with linear constraints and achieve a complexity result of $\widetilde{\mathcal{O}}(\varepsilon^{-2.5})$. By our notation and notating $(1-\theta)p_k$ in \cite{kong2023accelerated} as $\vy^{k+1}$, the dampened dual update in \cite{kong2023accelerated} becomes $\vy^{k+1} = (1-\theta) (\vy^{k} + \chi \beta_k(\vA\vx^{k+1}-\vb))$, where $\theta$ and $\chi$ are both in $(0,1)$ and satisfy $(1-\theta)(2-\theta)\chi \le \theta^2$. This is different from our dual update.

For regularized non-convex smooth optimization problems with convex nonlinear constraints, Li et al.~\cite{li2021augmented} design an FOM, called HiAPeM, by applying a hybrid of ALM  and a quadratic penalty method to a sequence of proximal point subproblems. Under Slater's condition, HiAPeM is able to produce an $\varepsilon$-KKT point with complexity of $\widetilde{\mathcal{O}}(\varepsilon^{-2.5})$. It is demonstrated in \cite{li2021augmented} that more frequent use of ALM {updates} can yield better empirical performance. However, obtaining the $\widetilde{\mathcal{O}}(\varepsilon^{-2.5})$ complexity result requires using the quadratic penalty method more frequently. This differs from our proposed DPALM, which is solely ALM-based and can yield better practical performance. Kong et al.~\cite{kong2022iteration} consider a convex cone-constrained regularized non-convex smooth optimization problem. With an appropriate convex cone, the problem considered in \cite{kong2022iteration} can have the same constraints as those in problem~\eqref{problem:main}. However, the objective function in \cite{kong2022iteration} is a special case of what we consider. Similar to our method, the FOM in \cite{kong2022iteration}, called NL-IAPIAL, is also based on the proximal ALM framework. 
Compared to our method, NL-IAPIAL tracks a condition (see Eqn.~(35) in \cite{kong2022iteration}) --- when the condition holds, it doubles the penalty parameter; otherwise the penalty parameter is kept unchanged. 
Another key difference from our method is 
that NL-IAPIAL always uses $\beta_k$ as the dual stepsize. 
Due to these differences, NL-IAPIAL has a higher complexity than ours. It requires $\widetilde{\mathcal{O}}( \varepsilon^{-3})$ first-order oracle {calls} to  produce an $\varepsilon$-KKT point. 

For non-convex optimization problems with non-convex constraints, Sahin et al.~\cite{sahin2019inexact} design an ALM-based FOM. Under a regularity condition that ensures near primal feasibility at a near stationary point of an ALM subproblem, their FOM can produce an $\varepsilon$-KKT point by $\widetilde{\mathcal{O}}(\varepsilon^{-4} )$ calls to the first-order oracle. The two works \cite{lin2022complexity} and \cite{li2021rate} assume the same regularity condition as that in \cite{sahin2019inexact}, but differently, their FOMs achieve an $\widetilde{\mathcal{O}}(\varepsilon^{-3} )$ complexity result, by adopting the framework of a proximal point penalty method and ALM respectively. 
The FOM in \cite{li2021rate} is also analyzed for problems with convex constraints, in which case its complexity becomes $\widetilde{\mathcal{O}}\big( \varepsilon^{-\frac{5}{2}}\big)$. Without a regularity condition but instead assuming a feasible initial point, the method in \cite{lin2022complexity} achieves a complexity result of $\widetilde{\mathcal{O}}( \varepsilon^{-4}) $. 
For solving non-smooth weakly convex problems with convex or weakly convex nonlinear constraint, Huang and Lin propose a single-loop switching subgradient method in \cite{huang2023single}. They introduce a switching stepsize rule to accompany the switching subgradient and show that their method can find a near $\varepsilon$-KKT point in $\mathcal{O}(\varepsilon^{-4})$ iterations by assuming a (uniform) Slater's condition. Curtis and Overton \cite{curtis2012sequential} give a method based on sequential quadratic programming (SQP)  for solving problems where the objective and constraint functions {may be non-convex and non-smooth.}  
{Global convergence to stationarity is established under  
a locally Lipschitz and continuous differentiability condition on open dense subsets of $\RR^d$ and a boundedness condition of first-order derivatives at algorithm-generated points.}

The two works \cite{drusvyatskiy2019efficiency} and \cite{zeng2022moreau} are most closely related to ours, though  {the problems they consider} are special cases of~\eqref{problem:main}. In \cite{drusvyatskiy2019efficiency},
Drusvyatskiy and Paquette consider the regularized compositional optimization problems in the form of $\min_{\vx} F(\vx):=  l(\vc(\vx)) + r(\vx)$, where $l$ and $r$ are closed convex functions, and $\vc$ is a smooth mapping. A Moreau-envelope-based smoothing prox-linear method is analyzed in \cite{drusvyatskiy2019efficiency}, and it reaches a complexity result of $\widetilde{\mathcal{O}}( \varepsilon^{-3})$ to produce a near $\vareps$-stationary point. 
We note that the constraints in~\eqref{problem:main} can be encoded into the objective by adding an indicator composed function $\iota_{\{\vzero\}}(\vA\vx-\vb) + \iota_{\RR_-^m}(\vg(\vx))$. Using Moreau-envelope smoothing, i.e., replacing the indicator functions $\iota_{\{\vzero\}}(\cdot)$ and $\iota_{\RR_-^m}(\cdot)$ by their Moreau envelope (see Definition~\ref{def:weak-cvx}), the composed function can be smoothed to $\frac{1}{2\nu}\|\vA\vx-\vb\|^2 + \frac{1}{2\nu}\|[\vg(\vx)]_+\|^2$, where $\nu>0$ is a smoothing parameter. Hence, the smoothing prox-linear method in \cite{drusvyatskiy2019efficiency} can be applied to~\eqref{problem:main} with a regularized compositional objective as we consider in Sect.~\ref{subsec: caseII}, but it is based on the quadratic penalty of the constraints. In contrast, our method is based on the proximal ALM framework and can perform significantly better; see the experimental results in Sect.~\ref{subsec: nllsq} and Sect.~\ref{subsec: fairnessROC}.
 Zeng et al.~\cite{zeng2022moreau} introduce a Moreau Envelope Augmented Lagrangian Method (MEAL) for problems with a weakly-convex objective and linear constraints. MEAL can be viewed as a gradient update method on the Moreau envelope of the AL function. Assuming an exact solution of each proximal ALM subproblem, MEAL can produce an $\vareps$-KKT point within $\mathcal{O}(\vareps^{-2})$ outer iterations when either an implicit Lipschitz subgradient property or an implicit bounded subgradient property on the objective function holds. An inexact version, named iMEAL, is also proposed in \cite{zeng2022moreau}, which only requires an $\vareps_k$-stationary solution for the $k$-th proximal ALM subproblem. The same-order outer iteration complexity results are shown for iMEAL, provided that $\sum_{k=0}^\infty \vareps_k^2 < \infty$. When the objective function is composite, i.e., a smooth term plus a convex regularizer, \cite{zeng2022moreau} also presents a linearized variant of MEAL, which has the same-order outer iteration complexity as MEAL and iMEAL. The (inexact) MEAL becomes an (inexact) proximal ALM when its stepsize $\eta=1$; see the updates in Eqn.~(5) and Eqn.~(7) of \cite{zeng2022moreau}. On the special linear-constrained case, iMEAL can achieve the {same order} outer iteration complexity result as our proposed DPALM. 
 However, to do so under the implicit bounded subgradient assumption, iMEAL requires a penalty parameter setting of $\beta_k=\Theta(\vareps^{-2})$. In contrast, our proposed DPALM only needs to set $\beta_k = \Theta(\sqrt{k+1})$ for all $k\ge0$, which increases to $\Theta(\vareps^{-1})$ eventually to produce a (near) $\vareps$-KKT point. Though an overall first-order oracle complexity result is not explicitly shown in \cite{zeng2022moreau} for iMEAL, due to the higher-order penalty parameters, it will be higher than our complexity by at least an order of $\vareps^{-\frac{1}{2}}$ if iMEAL applies the same first-order subroutine as our method.

		\subsection{Contributions}
Our contributions lie in both algorithm design and complexity analysis.
They are summarized as follows.
		\begin{enumerate}[(i)]
			\item We propose a damped proximal augmented Lagrangian method (DPALM) to solve problems in the form of~\eqref{problem:main}, which has a weakly-convex objective and linear and/or nonlinear convex constraints. At each iteration of DPALM, a strongly convex subproblem is formed by adding a proximal term with an appropriate proximal parameter to the AL function. The primal variable is updated to {a solution of the subproblem with a desired accuracy}, and the dual variables (or Lagrange multipliers) are then updated by performing a dual gradient ascent step but with (possibly) damped stepsizes instead of using the penalty parameter as a full stepsize.  
			\item Under Slater's condition, we show that  for any given $\vareps>0$, DPALM can produce a near $\vareps$-KKT point of problem~\eqref{problem:main} (see Definition~\ref{def: epskkt}) within $\mathcal{O}(\vareps^{-2})$ outer iterations,  when $f$ in~\eqref{problem:main} is weakly convex and has uniformly bounded subgradients, and if for each DPALM subproblem, a near-optimal solution is found with a desired accuracy. This result generalizes that in \cite{zeng2022moreau} from an affine-constrained problem to an affine and/or convex functional constrained one. In addition, the value of the penalty parameter that we require is in a lower order than that in \cite{zeng2022moreau} to ensure a (near) $\vareps$-KKT point under the non-smooth case. This leads our method to have a lower overall complexity.
			
			\item For the case where $f$ in~\eqref{problem:main} is smooth but may be non-convex, we apply Nesterov's APG method to find a near stationary solution of each DPALM subproblem and establish an $\widetilde{\mathcal{O}}\big(\varepsilon^{-\frac{5}{2}} \big)$ complexity result to produce an $\varepsilon$-KKT point. This result improves the $\widetilde{\mathcal{O}}\big(\varepsilon^{-3} \big)$ complexity obtained in \cite{kong2022iteration} for a proximal ALM based FOM. It matches the complexity results in \cite{li2021augmented} and \cite{lin2022complexity} for either an ALM-penalty-hybrid method or a quadratic penalty-based FOM, which often yields worse empirical performance than DPALM as demonstrated in Sect.~\ref{sec: NumericalExp}. 
			\item For the case where $f$ is in a compositional form, we apply Nesterov's APG method to a Moreau-envelope smoothed version of each DPALM subproblem and establish an $\widetilde{\mathcal{O}}\big(\varepsilon^{-3} \big)$ complexity result to produce a near $\varepsilon$-KKT point. This result generalizes that in \cite{drusvyatskiy2019efficiency} for solving an unconstrained compositional problem. 
            {Though it is possible to encode the constraints in~\eqref{problem:main} via 
            an appropriate outer smooth convex function and an appropriate inner vector function, doing so within the framework of~\cite{drusvyatskiy2019efficiency} leads to a quadratic penalty-based method, which performs significantly worse than DPALM, as demonstrated in Sections~\ref{subsec: nllsq} and~\ref{subsec: fairnessROC}.}
		\end{enumerate}

		\subsection{Notations and Definitions}\label{subsec: NotationsAndDef}
		The interior and 
  boundary of a set $\mathcal{X}$ are denoted by 
  $\mathrm{int}(\cX)$ and 
  $\mathrm{bd} (\mathcal{X})$, respectively. 
		We use $\mathcal{N}_{\mathcal{X}}(\vx)$ to denote the normal cone of $\mathcal{X}$ at $\vx$ 
  and define $\mathcal{B}_r=\left\{\vx \in \mathbb{R}^d:\|\vx\| \leq r\right\}$ for some $r>0$, where $\|\cdot\|$ denotes the Euclidean norm. For a symmetric matrix $\vM$, its smallest positive eigenvalue is denoted as $\lambda_{\min}^+ (\vM)$; for a positive integer $m$, we define $[m]=\{1,\ldots,m\}$. {The range of the matrix $\vA$ is denoted as
  Range$(\vA)$.} 
		\begin{definition}[Weakly convex function and Moreau envelope\cite{drusvyatskiy2019efficiency}]\label{def:weak-cvx} A function $f$ is called $\rho$-weakly convex for some $\rho \ge 0$, if $f(\cdot) + \frac{\rho}{2}\|\cdot\|^2$ is convex.
			For a $\rho$-weakly convex function $f$, its Moreau envelope and the proximal mapping for any $\nu\in(0, \frac{1}{\rho})$ are defined by
			$
			f_\nu({\vx}) :=\min_{\vz}\left\{f({\vz})+\frac{1}{2 \nu}\|{\vz}-{\vx}\|^2\right\}
			$ and $\operatorname{prox}_{\nu f}({\vx}) :=\argmin_\vz\{f({\vz})+\frac{1}{2 \nu}\|{\vz}-{\vx}\|^2\},
			$ respectively.
		\end{definition}

{\begin{definition}[Subdifferential of weakly convex functions]\label{def: subdiff}
Let $f$ be a $\rho$-weakly convex function for some $\rho\ge 0$. If $\rho=0$, i.e., $f$ is convex, 
the subdifferential of~$f$ at $\vx \in \dom(f)$ is defined as~\cite{clarke1990optimization}
\begin{align*}
    \partial f(\vx) := \{\vxi \in \RR^n: f(\vx') - f(\vx) \geq \langle \vxi, \vx' - \vx \rangle \ \text{ for all } \vx' \in \dom(f)\};
\end{align*}
otherwise, if $\rho>0$, the subdifferential of~$f$ at $\vx \in \dom(f)$ is defined as
$$\partial f(\vx) := \textstyle \partial \big(f(\cdot) + \frac{\rho}{2}\|\cdot\|^2\big)(\vx) - \rho \vx.$$
	\end{definition}
}        
  
		\begin{definition}[(near) $\vareps$-KKT point]\label{def: epskkt}
			Given $\varepsilon > 0$, a point ${\vx}\in \cX$ is called an $\varepsilon$-KKT point of~\eqref{problem:main}, if there exist ${\vy} \in \mathbb{R}^n$ and ${\vz} \in \mathbb{R}_+^m$ such that 
            \begin{equation}\label{eq: epskkt}
			\begin{aligned}
				\max\bigg\{ &\dist(\mathbf{0}, \partial F(\vx) + J_\vg({\vx})^\top {\vz} + \vA^\top {\vy} ), \\ &\quad \sqrt{\|\vA\vx - \vb\|^2 + \|[\vg(\vx)]_+ \|^2},\  \sum_{i=1}^m |{z}_i g_i({\vx}) |\bigg\}
				\leq \varepsilon,
			\end{aligned}        
            \end{equation}
		where 
        $J_{\vg}({\vx})$ denotes the Jacobian of $\vg$ at ${\vx}$. We say that $\overline{\vx}$ is a near $\varepsilon$-KKT point of~\eqref{problem:main}, if it is $\varepsilon$-close to an $\varepsilon$-KKT point $\vx$, i.e., $\|\overline\vx-\vx\|\le \vareps$.
		\end{definition}
  
  The three quantities in~\eqref{eq: epskkt} respectively measure the violation to the dual feasibility (DF), primal feasibility (PF), and complementary slackness (CS) conditions in the KKT system \cite{bazaraa2006nonlinear}.

		\subsection{Organization}
		The rest of this paper is organized as follows.
		Some preparatory lemmas are shown in Sect.~\ref{sec: ConveAnalysis}. 
Iteration complexity results are established in  Sect.~\ref{Sec: ConvAnalysisII} for three cases of $f$, and we present our numerical experiments in Sect.~\ref{sec: NumericalExp}. Finally, the paper is concluded in Sect.~\ref{sec:conclusion}.
		
		\section{Preparatory Lemmas} \label{sec: ConveAnalysis}		
		In this section, we show some results that 
		will be used in Sect.~\ref{Sec: ConvAnalysisII} to establish 
		iteration complexity results of  
		DPALM.  		
		Throughout the paper, we make the following assumptions.
		\begin{assumption}\label{Assump1}
		In~\eqref{problem:main}, the domain of $F$, denoted by $\mathcal{X}:=\dom(F)$, is compact, and its diameter is denoted by $D = \max_{\vx_1,\vx_2 \in \cX} \|\vx_1-\vx_2\| < \infty$. 
        Moreover,  $f$ is $\rho$-weakly convex on $\cX$ for some $\rho>0$, and $h$ is a closed convex function {that admits an efficient proximal mapping.}
		\end{assumption}
        
		\begin{assumption} \label{Assump2}
			For each $i = 1, \dots, m$, $g_i$ in~\eqref{problem:main} is convex and $L_g$-smooth on an open set $\cU$ containing $\mathcal{X}$, i.e., $\left\|\nabla g_i(\vx)-\nabla g_i(\vy)\right\| \leq L_{g}\|\vx-\vy\|\text{ for all }$ $\vx, \vy \in \mathcal{U}$.  
		\end{assumption}
		
		\begin{assumption}\label{Assump3}
			There exists ${\vx}_{\mathrm{feas}} \in \mathrm{int}\left(\mathcal{X}\right)$ such that $\vA \vx_{\mathrm{feas}} = \vb$ and \\
			$g_{\min} := \min_{i\in[m]} -g_i(\vx_{\mathrm{feas}}) >0.$ 
		\end{assumption}
  \begin{assumption}\label{Assump5}
There exists $r_h>0$ such that $\partial h( \vx) \subseteq \mathcal{N}_{\mathcal{X}}( \vx) + \mathcal{B}_{r_h} $, and $\partial h(\vx) \neq \emptyset$ for all $\vx \in \mathcal{X}$.
		\end{assumption}
Under Assumptions~\ref{Assump1} and~\ref{Assump2}, {$F$ is bounded on $\cX$, i.e., $\max_{\vx\in \cX}|F(\vx)| < \infty$}, and there must exist a positive constant $B_g$ such that
		\begin{equation} \label{eq: boundg}
			\begin{aligned}
			&\max \left\{\left|g_i(\vx)\right|,\left\|\nabla g_i(\vx)\right\|\right\} \leq B_{g}, \quad |g_i({\vx}) - g_i(\widetilde{\vx})| \leq B_g \|{\vx} - \widetilde{\vx} \|, 
            \\ & \quad \quad \quad \quad  \text{ for all }  {\vx}, \widetilde{\vx}  \in \mathcal{X},  i \in [m].
            \end{aligned}
		\end{equation}
If $\cX$ is a subset of a subspace, we can replace the condition of ${\vx}_{\mathrm{feas}} \in \mathrm{int}\left(\mathcal{X}\right)$ in Assumption~\ref{Assump3} to ${\vx}_{\mathrm{feas}} \in \mathrm{relint}\left(\mathcal{X}\right)$, where $\mathrm{relint}\left(\mathcal{X}\right)$ denotes the relative interior of $\cX$. All our results will still hold.
{  
Assumption~\ref{Assump5} has appeared 
in literature 
such as in \cite{lin2022complexity}. It holds 
when \( h \) is the sum of a convex Lipschitz continuous \emph{soft} regularizer and the indicator function of the \emph{hard} constraint set \( \cX \), such as $h(\vx) = \lambda\|\vx\|_1$ if $\vx\in [l_1, u_1]\times\cdots \times [l_d, u_d]$ and $+\infty$ otherwise, where $\lambda>0$, and $l_i$'s and $u_i$'s are real numbers. 
}
        

We show the boundedness of $\{\vp^k:=(\vy^k, \vz^k)\}$ in the lemma below.  
Its proof is given in Appendix~\ref{Appendix: multipbound}.
		\begin{lemma}
			\label{lem:boundxz}
			Under Assumptions~\ref{Assump1}--\ref{Assump5}, let $\{\vx^k, \vy^k,\vz^k\}$ be generated by Alg.~\ref{alg:ialm} such that \eqref{eq:approx-cond} holds for each $k\ge0$. 
If $\sum_{k\ge0} \beta_k \delta_k < \infty$, then 
\begin{align}
\label{eq:bpini}
    \|\vp^k\| \le B_0 + \sum_{j = 0}^{k-1} \widetilde{\delta}_j \leq B_p:=B_0 + \sum_{j \ge 0} \widetilde{\delta}_j \text{{, }for all } k \geq 0,
\end{align}
where \begin{align}
    &B_0 := \max \{\|\vp^0\|, \theta\}, \label{eq:b0}\\
    &\widetilde{\delta}_j := \alpha_j \delta_j(\|\vA\| + \sqrt{m} B_g), \label{eq:delta}\\
    &\theta := \frac{\overline{B}}{\min\left\{ \tau \sqrt{\lambda_{\min}^+(\vA\vA^\top)} , \frac{g_{\min}}{2} \right\}}, \label{eq:theta}\\
    &\overline{B} :=  D(r_h + \overline{B}_f)+2\rho D^2 + D+ \tau(1 + r_h + \overline{B}_f + 2\rho D ),\\
    &\tau := \min\left\{\frac{g_{\min}}{2\sqrt{m}B_g},
        \dist(\vx_{\mathrm{feas}},
        \mathrm{bd}(\cX))\right\}.
\end{align}
		\end{lemma}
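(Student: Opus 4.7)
The plan is to prove by induction on $k$ the one-step recurrence
$\|\vp^{k+1}\| \le \max\{\|\vp^k\|, \theta\} + \widetilde{\delta}_k$,
which, together with the base case $\|\vp^0\| \le B_0 = \max\{\|\vp^0\|,\theta\}$, telescopes to the claimed bound $\|\vp^k\| \le B_0 + \sum_{j=0}^{k-1}\widetilde{\delta}_j$, and then the hypothesis $\sum_k \beta_k\delta_k < \infty$ (with $\alpha_k \le \beta_k$) yields the uniform bound by $B_p$. The key device is to introduce the ``full-step'' auxiliary iterates
$\tilde{\vy}^{k+1} := \vy^k + \beta_k(\vA\hat{\vx}^{k+1}-\vb)$ and $\tilde{\vz}^{k+1} := [\vz^k + \beta_k \vg(\hat{\vx}^{k+1})]_+$,
which are precisely the multipliers that appear in the approximate KKT system at $\hat{\vx}^{k+1}$. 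Setting $\tilde{\vp}^{k+1} := (\tilde{\vy}^{k+1},\tilde{\vz}^{k+1})$, the proof splits into \textbf{(i)} a Slater-type multiplier bound $\|\tilde{\vp}^{k+1}\| \le \max\{\|\vp^k\|, \theta\}$, and \textbf{(ii)} the observation that the actual damped iterate $\vp^{k+1}$ differs from $(1-\lambda_k)\vp^k + \lambda_k\tilde{\vp}^{k+1}$ by at most $\widetilde{\delta}_k$ in norm, with $\lambda_k := \alpha_k/\beta_k \in (0,1]$.

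For claim (i), \eqref{eq:approx-cond} produces $\vxi_{\overline f} \in \partial\overline{f}(\hat{\vx}^{k+1})$, $\vxi_{\cN} \in \cN_{\cX}(\hat{\vx}^{k+1})$, $\vxi_{r_h}$ with $\|\vxi_{r_h}\|\le r_h$ (via Assumption~\ref{Assump5}), and $\vxi_{\vareps}$ with $\|\vxi_{\vareps}\|\le \vareps_k\le 1$ satisfying
\begin{equation*}
\vxi_{\overline f} + \vxi_{\cN} + \vxi_{r_h} + \vA^\top \tilde{\vy}^{k+1} + J_{\vg}(\hat{\vx}^{k+1})^\top \tilde{\vz}^{k+1} + 2\rho(\hat{\vx}^{k+1}-\vx^k) = \vxi_{\vareps}.
\end{equation*}
Taking the inner product of both sides with $\vw - \hat{\vx}^{k+1}$ for the perturbed Slater point $\vw := \vx_{\mathrm{feas}} - \tau\vu$, where $\vu$ is the unit vector $\vA^\top\tilde{\vy}^{k+1}/\|\vA^\top\tilde{\vy}^{k+1}\|$ (or any unit direction if $\vA^\top\tilde{\vy}^{k+1}=\vzero$), the definition of $\tau$ ensures $\vw \in \cX$ (so $\langle\vxi_{\cN}, \vw - \hat{\vx}^{k+1}\rangle \le 0$) and $g_i(\vw) \le -g_{\min}/2$ for every $i$. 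Using $\vA\vw = \vb - \tau\vA\vu$ together with $\tilde{\vy}^{k+1} - \vy^k = \beta_k(\vA\hat{\vx}^{k+1}-\vb)$, convexity of each $g_i$, and the componentwise identity $\sum_i \tilde z^{k+1}_i g_i(\hat{\vx}^{k+1}) = \beta_k^{-1}\langle \tilde{\vz}^{k+1}, \tilde{\vz}^{k+1}-\vz^k\rangle$ (which follows by splitting into the two cases within $[\,\cdot\,]_+$), one arrives after bounding the remaining terms by $\|\vw - \hat{\vx}^{k+1}\| \le D+\tau$ at
\begin{equation*}
\beta_k^{-1}\|\tilde{\vp}^{k+1}\|^2 + \tau\|\vA^\top \tilde{\vy}^{k+1}\| + \tfrac{g_{\min}}{2}\|\tilde{\vz}^{k+1}\|_1 \le \beta_k^{-1}\|\tilde{\vp}^{k+1}\|\,\|\vp^k\| + \overline{B}.
\end{equation*}
Since $\vb \in \mathrm{Range}(\vA)$ by Assumption~\ref{Assump3} and $\vy^0 \in \mathrm{Range}(\vA)$, induction yields $\tilde{\vy}^{k+1} \in \mathrm{Range}(\vA)$ and hence $\|\vA^\top \tilde{\vy}^{k+1}\| \ge \sqrt{\lambda_{\min}^+(\vA\vA^\top)}\|\tilde{\vy}^{k+1}\|$. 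Combining with $\|\tilde{\vz}^{k+1}\|_1 \ge \|\tilde{\vz}^{k+1}\|$ and $\|\tilde{\vy}^{k+1}\| + \|\tilde{\vz}^{k+1}\| \ge \|\tilde{\vp}^{k+1}\|$, and splitting into $\|\tilde{\vp}^{k+1}\| \le \|\vp^k\|$ (done) or $\|\tilde{\vp}^{k+1}\|>\|\vp^k\|$ (the quadratic difference is nonnegative, whence $c\|\tilde{\vp}^{k+1}\| \le \overline{B}$ with $c := \min\{g_{\min}/2,\ \tau\sqrt{\lambda_{\min}^+(\vA\vA^\top)}\}$), one obtains $\|\tilde{\vp}^{k+1}\| \le \max\{\|\vp^k\|, \theta\}$.

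Claim (ii) is a direct computation: the identity $(1-\lambda_k)\vy^k + \lambda_k\tilde{\vy}^{k+1} = \vy^k + \alpha_k(\vA\hat{\vx}^{k+1}-\vb)$ shows the $\vy$-discrepancy from $\vy^{k+1}$ equals $\alpha_k\vA(\vx^{k+1}-\hat{\vx}^{k+1})$, of norm $\le \alpha_k\|\vA\|\delta_k$; an analogous identity for $\vz$, combined with the $1$-Lipschitzness of the componentwise max and the Lipschitz bound~\eqref{eq: boundg}, gives a $\vz$-discrepancy of norm $\le \alpha_k\sqrt{m}B_g\delta_k$. The squared sum is bounded by $\widetilde{\delta}_k^2$, so the triangle inequality and convexity of the norm yield
$\|\vp^{k+1}\| \le (1-\lambda_k)\|\vp^k\| + \lambda_k\|\tilde{\vp}^{k+1}\| + \widetilde{\delta}_k \le \max\{\|\vp^k\|,\theta\} + \widetilde{\delta}_k$, closing the induction. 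The main obstacle is claim (i): the Slater perturbation direction $\vu$ must couple to $\tilde{\vy}^{k+1}$ in order to extract a term proportional to $\|\vA^\top\tilde{\vy}^{k+1}\|$, while $\tau$ must be small enough both to keep $\vw$ inside $\cX$ and to preserve the margin $g_i(\vw) \le -g_{\min}/2$. This balance is exactly what produces the particular constants $\tau$, $\overline{B}$, and $\theta$ stated in the lemma.
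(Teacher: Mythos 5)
Your proposal is correct, and its overall skeleton coincides with the paper's: the same full-step auxiliary multipliers $\widehat{\vp}^{k+1}=(\vy^k+\beta_k(\vA\widehat\vx^{k+1}-\vb),[\vz^k+\beta_k\vg(\widehat\vx^{k+1})]_+)$, the same key inequality $\tfrac{1}{\beta_k}\|\widehat{\vp}^{k+1}\|^2+c\|\widehat{\vp}^{k+1}\|\le\overline B+\tfrac{1}{\beta_k}\langle\widehat{\vp}^{k+1},\vp^k\rangle$, the same induction, and the same final step writing $\vp^{k+1}$ as a convex combination of $\vp^k$ and $\widehat{\vp}^{k+1}$ up to an error of $\widetilde\delta_k$. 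Where you genuinely diverge is in how the central inequality is extracted from the approximate KKT inclusion. The paper tests the inclusion against $\vx_{\mathrm{feas}}$ itself, which forces it to control the normal-cone element $\vxi_1^{k+1}$ explicitly via a supporting-hyperplane argument (yielding $\langle\widehat\vx^{k+1}-\vx_{\mathrm{feas}},\vxi_1^{k+1}\rangle\ge\dist(\vx_{\mathrm{feas}},\mathrm{bd}(\cX))\|\vxi_1^{k+1}\|$), and then separately bounds $\sqrt{\lambda_{\min}^+(\vA\vA^\top)}\|\widehat\vy^{k+1}\|$ by taking norms in the residual identity, multiplies by $\tau$, and adds the two so that the $\tau\|\vxi_1^{k+1}\|$ term is absorbed. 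You instead test against the perturbed point $\vx_{\mathrm{feas}}-\tau\vu$ with $\vu$ aligned with $\vA^\top\widehat\vy^{k+1}$: this makes the normal-cone term nonpositive outright (since $\tau\le\dist(\vx_{\mathrm{feas}},\mathrm{bd}(\cX))$ keeps the test point in $\cX$) and produces $\tau\|\vA^\top\widehat\vy^{k+1}\|$ directly from the inner product, while the other half of the definition of $\tau$ preserves the margin $g_i\le-g_{\min}/2$. Your variant is somewhat cleaner — it never needs to bound $\|\vxi_1^{k+1}\|$ or invoke a supporting hyperplane — and, pleasantly, it reproduces exactly the constants $\tau$, $\overline B$, and $\theta$ of the lemma, since $(\overline B_f+r_h+2\rho D+1)(D+\tau)$ equals the paper's $\overline B$ term for term. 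Both routes rely on the same two roles of $\tau$, so neither buys a quantitatively better bound; the trade-off is purely one of proof economy.
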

  
 The next lemma gives the bound for the violation to PF. Its proof is given in Appendix~\ref{section: primalboundproof}.
\begin{lemma}
		\label{lem:tilderhodeltak}
		Under Assumptions of Lemma~\ref{lem:boundxz} and with $\vareps_k \le \sqrt{\frac{\rho}{2\beta_k}}$ for all $k\ge0$, it holds
  \begin{align}\label{eq: diffL}
       {\left\|\vA {\vx}^{k+1}- \vb\right\|^2 + \left\|\left[\vg\left( {\vx}^{k+1}\right)\right]_+\right\|^2} \leq \frac{C_p^2}{\beta_k^2}
  \end{align}
  for all $k\ge 0$, where 
		\begin{align}
                & C_p := 2\widehat{C}_p + \sum_{j=0}^{\infty}\beta_j \delta_j(\|\vA\| + \sqrt{m} B_g) < \infty, \label{eq:cp}\\
			&   \widehat{C}_p := 2\left({B_p} + \sqrt{ \frac{{Q}^2}{g_{\min}^2 } + Q^2\|( \vA \vA^{\top})^{\dagger} \vA\|^2C_1^2}\right)+1,\label{eq:C-P-Case1}\\
			&    Q := D(\overline{B}_f + 2\rho D + r_h), \quad  \quad C_1 := \frac{1}{D} + \frac{1}{\mathrm{dist}( \vx_{\mathrm{feas}},\mathrm{bd}(\mathcal{X}))} + \frac{B_{g}}{g_{\min}}. \label{eq:Q-C-1-Case1}
		\end{align}
	\end{lemma}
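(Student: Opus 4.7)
My strategy is to introduce ``shadow'' dual updates whose deviations from $(\vy^k,\vz^k)$ encode the primal-feasibility violation scaled by $\beta_k$, bound those shadow updates uniformly in $k$ via Slater's condition, and divide by $\beta_k$ to obtain the claim.

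Concretely, I would set $\widetilde\vy^{k+1}:=\vy^k+\beta_k(\vA\widehat\vx^{k+1}-\vb)$ and $\widetilde\vz^{k+1}:=[\vz^k+\beta_k\vg(\widehat\vx^{k+1})]_+$, so that the approximate stationarity \eqref{eq:approx-cond} at $\widehat\vx^{k+1}$ becomes
\begin{equation*}
\vxi=\vxi_{\overline{f}}+\vxi_h+\vA^\top\widetilde\vy^{k+1}+J_\vg(\widehat\vx^{k+1})^\top\widetilde\vz^{k+1}+2\rho(\widehat\vx^{k+1}-\vx^k),\qquad \|\vxi\|\le\varepsilon_k,
\end{equation*}
for some $\vxi_{\overline{f}}\in\partial\overline{f}(\widehat\vx^{k+1})$ and $\vxi_h\in\partial h(\widehat\vx^{k+1})$. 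I then record three structural facts used throughout: $\vA\widehat\vx^{k+1}-\vb=(\widetilde\vy^{k+1}-\vy^k)/\beta_k$, $[g_i(\widehat\vx^{k+1})]_+\le \widetilde z_i^{k+1}/\beta_k$, and the identity $\widetilde z_i^{k+1}g_i(\widehat\vx^{k+1})=\bigl((\widetilde z_i^{k+1})^2-z_i^k\widetilde z_i^{k+1}\bigr)/\beta_k$; the last two follow from a case split on the sign of $z_i^k+\beta_k g_i(\widehat\vx^{k+1})$.

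To bound $\|(\widetilde\vy^{k+1},\widetilde\vz^{k+1})\|$ uniformly in $k$, I will test the stationarity relation against the direction $\widehat\vx^{k+1}-\vx_{\mathrm{feas}}-\tau\vu$, with $\tau$ as defined in Lemma~\ref{lem:boundxz} (so that $\vx_{\mathrm{feas}}+\tau\vu\in\cX$ whenever $\|\vu\|\le 1$ and so that $\tau B_g\le g_{\min}/2$) and with $\vu:=-\vA^\top\widetilde\vy^{k+1}/\|\vA^\top\widetilde\vy^{k+1}\|$. The perturbation $\tau\vu$ makes the normal-cone component of $\vxi_h$ contribute non-negatively via Assumption~\ref{Assump5}; convexity of each $g_i$ combined with $g_i(\vx_{\mathrm{feas}})\le -g_{\min}$ yields a term $\ge (g_{\min}/2)\|\widetilde\vz^{k+1}\|$ after absorbing the Lipschitz slack $\tau B_g$; and the choice of $\vu$ produces the term $\tau\|\vA^\top\widetilde\vy^{k+1}\|$. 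Substituting the three identities from the first step, I arrive at an inequality of the form
\begin{equation*}
\tau\|\vA^\top\widetilde\vy^{k+1}\|+\frac{g_{\min}}{2}\|\widetilde\vz^{k+1}\|\le C+\frac{B_p\bigl(\|\widetilde\vy^{k+1}\|+\|\widetilde\vz^{k+1}\|\bigr)}{\beta_k},
\end{equation*}
where $C$ collects constants in $\overline{B}_f,r_h,\rho,D,\tau,\varepsilon_k$ (with $\varepsilon_k\le 1$ by \eqref{eq:approx-cond}). The pseudo-inverse estimate $\|\widetilde\vy^{k+1}\|\le\|\vA^\top\widetilde\vy^{k+1}\|/\sqrt{\lambda_{\min}^+(\vA\vA^\top)}$, valid since $\widetilde\vy^{k+1}\in\mathrm{Range}(\vA)$ by induction from $\vy^0$, then absorbs the coupling terms on the right into the positive left-hand side and yields absolute bounds on $\|\widetilde\vz^{k+1}\|$ and $\|\widetilde\vy^{k+1}\|$ matching the structure of $\widehat{C}_p$ in~\eqref{eq:C-P-Case1}.

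Combined with the three structural facts, these bounds give the desired $\widehat{C}_p^2/\beta_k^2$ estimate for the primal feasibility at $\widehat\vx^{k+1}$. To pass from $\widehat\vx^{k+1}$ to $\vx^{k+1}$, I apply the triangle inequality using $\|\vx^{k+1}-\widehat\vx^{k+1}\|\le\delta_k$, $\|\vA\|$-Lipschitz continuity of $\vx\mapsto\vA\vx-\vb$, and $B_g$-Lipschitz continuity of each $[g_i]_+$ (via~\eqref{eq: boundg}), which introduces the additive term $\delta_k(\|\vA\|+\sqrt m B_g)$ and the cumulative correction $\sum_j\beta_j\delta_j$ appearing in $C_p$ of~\eqref{eq:cp}. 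The bound $\varepsilon_k\le\sqrt{\rho/(2\beta_k)}$ is only needed to prevent the $\varepsilon_k$-dependent part of $C$ from degrading the final $1/\beta_k$ rate. The main obstacle will be the two-variable coupling: the Slater-type test produces a single scalar inequality constraining both $\|\widetilde\vy^{k+1}\|$ and $\|\widetilde\vz^{k+1}\|$, and a clean decoupling requires both the tailored perturbation $\tau\vu$ directed at the equality constraint and the pseudo-inverse bound through $\lambda_{\min}^+(\vA\vA^\top)$.
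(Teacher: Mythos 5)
Your strategy is sound and genuinely different from the paper's route, but one step as written would fail and needs repair. The paper proves this lemma by introducing the auxiliary strongly convex constrained subproblem $\min\{\overline f(\vx)+h(\vx)+\rho\|\vx-\vx^k\|^2:\vA\vx=\vb,\ \vg(\vx)\le\vzero\}$, bounding its KKT multiplier $\widehat\vp_*^k$ via \cite[Lemma~3]{lin2022complexity} (which is where $Q$ and $C_1$ in \eqref{eq:Q-C-1-Case1} come from), and invoking the Fej\'er-type estimate of Lemma~\ref{lemma: pbarbound}, $\|\widehat\vp^{k+1}-\widehat\vp_*^k\|^2\le\|\vp^k-\widehat\vp_*^k\|^2+2\beta_k\vareps_k^2/\rho$; the residual is then read off $\|\widehat\vp^{k+1}-\vp^k\|/\beta_k$, and the hypothesis $\vareps_k\le\sqrt{\rho/(2\beta_k)}$ is used precisely to tame the $2\beta_k\vareps_k^2/\rho$ term. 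You instead bound the shadow duals $\widetilde\vp^{k+1}=(\widetilde\vy^{k+1},\widetilde\vz^{k+1})$ (the paper's $\widehat\vp^{k+1}$) directly by a Slater test --- essentially re-deriving Lemma~\ref{lem: appendbeforeinduction} --- and read the residual off the increment identities. That route avoids the auxiliary subproblem and both external lemmas, only needs $\vareps_k\le 1$, and since $\widehat C_p\ge 2B_p+1$ your resulting $\cO(B_p)/\beta_k$ bound is dominated by the stated $C_p/\beta_k$, so the lemma with the paper's constants still follows.

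The gap is in the decoupling step. The schematic inequality you arrive at, $\tau\|\vA^\top\widetilde\vy^{k+1}\|+\tfrac{g_{\min}}{2}\|\widetilde\vz^{k+1}\|\le C+B_p\bigl(\|\widetilde\vy^{k+1}\|+\|\widetilde\vz^{k+1}\|\bigr)/\beta_k$, cannot be closed by ``absorbing the coupling terms into the left-hand side'': that would require $B_p/\beta_k\lesssim\min\{\tau\sqrt{\lambda_{\min}^+(\vA\vA^\top)},g_{\min}/2\}$, which fails when $\beta_k$ is small (and $\beta_0$ may be as small as $10^{-4}$ in this paper). You must keep the quadratic term $\tfrac{1}{\beta_k}\|\widetilde\vp^{k+1}\|^2$ on the left --- it is exactly what your own identities $\langle\vb-\vA\widehat\vx^{k+1},\widetilde\vy^{k+1}\rangle=-\langle\widetilde\vy^{k+1},\widetilde\vy^{k+1}-\vy^k\rangle/\beta_k$ and $-\sum_i\widetilde z_i^{k+1}g_i(\widehat\vx^{k+1})=-\langle\widetilde\vz^{k+1},\widetilde\vz^{k+1}-\vz^k\rangle/\beta_k$ produce --- and then argue as in Lemma~\ref{lem: appendbeforeinduction}: with $c:=\min\{\tau\sqrt{\lambda_{\min}^+(\vA\vA^\top)},g_{\min}/2\}$ the inequality becomes $\bigl(c+\|\widetilde\vp^{k+1}\|/\beta_k\bigr)\|\widetilde\vp^{k+1}\|\le c\,\theta+\|\widetilde\vp^{k+1}\|\,\|\vp^k\|/\beta_k$, which gives $\|\widetilde\vp^{k+1}\|\le\max\{\theta,\|\vp^k\|\}\le B_p$ uniformly in $\beta_k$. (Alternatively, simply quote the bound $\|\widehat\vp^{k+1}\|\le B_p$ already established inside the proof of Lemma~\ref{lem:boundxz}, whose hypotheses you are assuming.) With that fix, the rest of your argument --- $\|\vA\widehat\vx^{k+1}-\vb\|\le(\|\widetilde\vy^{k+1}\|+\|\vy^k\|)/\beta_k$, $\|[\vg(\widehat\vx^{k+1})]_+\|\le\|\widetilde\vz^{k+1}\|/\beta_k$, and the $\delta_k(\|\vA\|+\sqrt m B_g)$ transfer to $\vx^{k+1}$ --- goes through and yields \eqref{eq: diffL}.
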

		
		\begin{lemma} \label{lem: CompSlack}
			Suppose Assumptions of Lemma~\ref{lem:boundxz} hold. Then for any $k\ge 0$ and any $\vx\in \mathcal{X}$, it holds
			\begin{equation}\label{eq:CS-k+1-x} 
		 	\sum_{i=1}^m\left|[ z_i^k + \beta_k g_i\left( \vx\right)]_+g_i\left( \vx\right)\right| \leq  \frac{B_p^2}{\beta_k} + \frac{5\beta_k}{4}\sum_{i =1}^m [g_i(\vx)]^2_+,
			   \end{equation} where $B_p$ is defined in Lemma~\ref{lem:boundxz}.
   In addition, it holds 
   \begin{equation}\label{eq:CS-k+1} 
     \sum_{i=1}^m\left|[ z_i^k + \beta_k g_i\left( \vx^{k+1}\right)]_+g_i\left( \vx^{k+1}\right)\right| \leq \frac{1}{\beta_k}\left(B_p^2 + \frac{5C_p^2}{4}\right).
   \end{equation}
		\end{lemma}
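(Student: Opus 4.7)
The plan is to prove \eqref{eq:CS-k+1-x} summand-by-summand: fix $i\in[m]$ and $\vx\in\cX$, bound $\bigl|[z_i^k+\beta_k g_i(\vx)]_+ g_i(\vx)\bigr|$ by $\tfrac{(z_i^k)^2}{\beta_k}+\tfrac{5\beta_k}{4}[g_i(\vx)]_+^2$, and then sum over $i$ while invoking $\|\vz^k\|\le B_p$ from Lemma~\ref{lem:boundxz}. The bound \eqref{eq:CS-k+1} will follow by substituting $\vx=\vx^{k+1}$ and using Lemma~\ref{lem:tilderhodeltak} to control $\sum_i [g_i(\vx^{k+1})]_+^2$ by $C_p^2/\beta_k^2$. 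Before starting, I note that $\vz^k\ge\vzero$ for all $k$: the initialization gives $\vz^0\ge\vzero$, and the update $z_i^{k+1}=z_i^k+\alpha_k\max\{-z_i^k/\beta_k,g_i(\vx^{k+1})\}$ satisfies $z_i^{k+1}\ge z_i^k(1-\alpha_k/\beta_k)\ge 0$ because $\alpha_k\le\beta_k$ in Alg.~\ref{alg:ialm}.

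For the pointwise bound I would split on the sign of $g_i(\vx)$. If $g_i(\vx)\ge 0$, then $z_i^k+\beta_k g_i(\vx)\ge 0$, so the summand equals $z_i^k g_i(\vx)+\beta_k[g_i(\vx)]^2$. Applying the asymmetric Young inequality $ab\le \tfrac{a^2}{c}+\tfrac{cb^2}{4}$ (which follows from $(a/\sqrt{c}-\sqrt{c}\,b/2)^2\ge 0$) with $a=z_i^k$, $b=g_i(\vx)$, $c=\beta_k$ to the cross term produces exactly $\tfrac{(z_i^k)^2}{\beta_k}+\tfrac{5\beta_k}{4}[g_i(\vx)]_+^2$. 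If instead $g_i(\vx)<0$, then $[g_i(\vx)]_+=0$, and since $z_i^k\ge 0$, one has $[z_i^k+\beta_k g_i(\vx)]_+\le z_i^k$; moreover, whenever this positive part is strictly positive, $z_i^k+\beta_k g_i(\vx)>0$ forces $|g_i(\vx)|\le z_i^k/\beta_k$. Combining,
\begin{equation*}
\bigl|[z_i^k+\beta_k g_i(\vx)]_+ g_i(\vx)\bigr|\le z_i^k|g_i(\vx)|\le (z_i^k)^2/\beta_k,
\end{equation*}
which still fits the target bound.

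Summing the pointwise inequality over $i\in[m]$ and using $\|\vz^k\|^2\le\|\vp^k\|^2\le B_p^2$ yields \eqref{eq:CS-k+1-x}. For \eqref{eq:CS-k+1}, specialize to $\vx=\vx^{k+1}$; Lemma~\ref{lem:tilderhodeltak} gives $\sum_{i=1}^m [g_i(\vx^{k+1})]_+^2=\|[\vg(\vx^{k+1})]_+\|^2\le C_p^2/\beta_k^2$, so the right-hand side of \eqref{eq:CS-k+1-x} becomes $\tfrac{B_p^2}{\beta_k}+\tfrac{5\beta_k}{4}\cdot\tfrac{C_p^2}{\beta_k^2}=\tfrac{1}{\beta_k}\bigl(B_p^2+\tfrac{5C_p^2}{4}\bigr)$, as claimed. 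There is no genuine obstacle here; the only subtle step is the negative-$g_i(\vx)$ case, in which the activity of the positive-part operator must be exploited to bound $|g_i(\vx)|$ by $z_i^k/\beta_k$ so that this contribution is absorbed into the $B_p^2/\beta_k$ reservoir rather than polluting the $[g_i(\vx)]_+^2$ term, and the asymmetric Young weighting (with $b^2$-coefficient $\tfrac14$ rather than $\tfrac12$) is precisely what yields the prescribed constant $\tfrac54$.
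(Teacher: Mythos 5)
Your proposal is correct and follows essentially the same route as the paper: split indices by the sign of $g_i(\vx)$ and by whether the positive part is active, use the asymmetric Young inequality $z_i^k g_i(\vx)\le \tfrac{(z_i^k)^2}{\beta_k}+\tfrac{\beta_k}{4}g_i^2(\vx)$ on the active nonnegative indices, bound the active negative indices by $(z_i^k)^2/\beta_k$ via $|g_i(\vx)|\le z_i^k/\beta_k$, then invoke $\|\vz^k\|\le B_p$ and, for \eqref{eq:CS-k+1}, the primal-feasibility bound \eqref{eq: diffL}. The explicit inductive verification that $\vz^k\ge\vzero$ is a nice touch the paper leaves implicit.
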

		\begin{proof} 
	For a given $\vx$, let $J_+ := \left\{i : g_i\left( \vx\right) \geq 0\right\},$ $ J_- := \left\{i : g_i\left( \vx\right) < 0\right\},$ \\$ J_3^k := \{i: -\frac{z_i^k}{\beta_k} \geq g_i\left(\vx\right)\}, $ and $J_4^k := \{i: -\frac{z_i^k}{\beta_k} < g_i\left(\vx\right)\}.
			$
			We then have
			\begin{align}\label{eq:comp-bd}
				&\quad\,\,\sum_{i=1}^m\left|[ z_i^k + \beta_k g_i\left( \vx\right)]_+g_i\left( \vx\right)\right| \notag \\
				&= \sum_{i\in J_4^k \cap J_+}\left( z_i^k + \beta_kg_i\left( \vx\right)\right) g_i\left( \vx\right) - \sum_{i\in J_4^k \cap J_-}\left( z_i^k + \beta_kg_i\left( \vx\right)\right) g_i\left( \vx\right) \notag\\
				&{\leq} \sum_{i\in J_4^k \cap J_+}\left( z_i^k g_i\left(\vx\right) + \beta_kg_i^2\left( \vx\right)\right) + \sum_{i\in J_4^k \cap J_-}\frac{ (z_i^k)^2}{\beta_k} \notag \\
				&\leq \sum_{i\in J_4^k \cap J_+}\frac{\left(z_i^k\right)^2}{\beta_k}  + \frac{\beta_k}{4} \sum_{i \in J_4^k \cap J_+} g_i^2\left(\vx\right) + \beta_k \sum_{i\in J_4^k \cap J_+} g_i^2\left( \vx\right) + \sum_{i\in J_4^k \cap J_-}\frac{\left( z_i^k\right)^2}{\beta_k} \notag\\
				&= \frac{1}{\beta_k} \sum_{i \in J_4^k} \left(z_i^k\right)^2 + \frac{5\beta_k}{4}\sum_{i\in J_4^k \cap J_+} g_i^2\left(\vx\right) \leq \frac{1}{\beta_k} \sum_{i =1}^m \left(z_i^k\right)^2 + \frac{5\beta_k}{4}\sum_{i =1}^m [g_i(\vx)]^2_+ ,
			\end{align}
			where the first inequality holds because $$-( z_i^k + \beta_kg_i( \vx)) g_i( \vx)\leq -z_i^kg_i( \vx)\leq \frac{( z_i^k)^2}{\beta_k}$$ for all $i\in J_4^k \cap J_-$, the second inequality is from Young's inequality, 
			the second equality is obtained by combining $J_+$ and $J_-$, and the last inequality holds by definition of $J_+$. We can now prove~\eqref{eq:CS-k+1-x} by applying Lemma~\ref{lem:boundxz} to~\eqref{eq:comp-bd}.
           
            When $\vx = \vx^{k+1}$, we use~\eqref{eq: diffL} to further bound $\sum_{i =1}^m [g_i(\vx)]^2_+$ and complete the proof. 
		\end{proof}
		
	The next lemma will be used to show the cumulative change of the AL functions in Lemma~\ref{lem: ProxErr}. 
	Its proof is given in Appendix~\ref{AppendixA}. 
		\begin{lemma}
			\label{lem:boundggg}
			Suppose Assumptions of Lemma~\ref{lem:tilderhodeltak} hold. For any integer $K>0$,
			it holds that 
			\begin{equation}\label{eq:lem:boundggg}
			\begin{aligned}
				&  \sum_{k=0}^{K-1}\left(\frac{\beta_{k+1}}{2}\left\|\left[\vg\left(\vx^{k+1}\right)+\frac{ \vz^{k+1}}{\beta_{k+1}}\right]_+\right\|^2 - \frac{\beta_k}{2}\left\|\left[\vg\left(\vx^{k+1}\right)+\frac{ \vz^k}{\beta_k}\right]_+\right\|^2\right)\\
				\leq  &   (C_p^2 + B_p^2) \sum_{k=0}^{K-1} \frac{\beta_{k+1}-\beta_k}{2\beta_k^2} + \left(\frac{3}{2}C_p + B_p \right)\sum_{k=0}^{K-1} \frac{v_k}{\beta_k},
			\end{aligned}
			\end{equation}
			where $C_p$ is defined in Lemma~\ref{lem:tilderhodeltak}. In addition, 
			\begin{equation}\label{eq:lem: yyybound}
   \begin{aligned}
			&\sum_{k=0}^{K-1} \left\langle \vy^{k+1}- \vy^k, \vA \vx^{k+1}-\vb \right\rangle \leq C_p\sum_{k=0}^{K-1} \frac{v_k}{\beta_k},\\
			&\sum_{k=0}^{K-1} \frac{\beta_{k+1}-\beta_k}{2} \left\|\vA \vx^{k+1}- \vb\right\|^2 \leq C_p^2\sum_{k=0}^{K-1} \frac{\beta_{k+1}-\beta_k}{2\beta_k^2}.
   \end{aligned}
			\end{equation}
		\end{lemma}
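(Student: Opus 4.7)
The plan is to bound each of the two displays in Lemma~\ref{lem:boundggg} term-by-term, exploiting the dual update rule in Alg.~\ref{alg:ialm}, the feasibility bound from Lemma~\ref{lem:tilderhodeltak}, and the multiplier bound from Lemma~\ref{lem:boundxz}. For notational convenience, I set $\vq_k := \vg(\vx^{k+1})$, $\vs_k := [\vq_k + \vz^k/\beta_k]_+$, and $\vs_k' := [\vq_k + \vz^{k+1}/\beta_{k+1}]_+$, so that the summand in~\eqref{eq:lem:boundggg} is $\frac{\beta_{k+1}}{2}\|\vs_k'\|^2 - \frac{\beta_k}{2}\|\vs_k\|^2$. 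A preliminary inductive check shows $\vz^k \ge \vzero$ for all $k$ (since $\vz^0 \ge \vzero$ and the update $\vz^{k+1} = \vz^k + \alpha_k\max\{-\vz^k/\beta_k, \vq_k\}$ combined with $\alpha_k \le \beta_k$ preserves nonnegativity), which together with Lemmas~\ref{lem:tilderhodeltak} and~\ref{lem:boundxz} yields $\|\vs_k\|, \|\vs_k'\| = O((C_p + B_p)/\beta_k)$ via the componentwise inequality $[\vq_k + \vz/\beta]_+ \le [\vq_k]_+ + \vz/\beta$ whenever $\vz \ge \vzero$.

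To handle~\eqref{eq:lem:boundggg}, I split each summand as
\[
\tfrac{\beta_{k+1}}{2}\|\vs_k'\|^2 - \tfrac{\beta_k}{2}\|\vs_k\|^2 = \tfrac{\beta_{k+1}-\beta_k}{2}\|\vs_k\|^2 + \tfrac{\beta_{k+1}}{2}\left(\|\vs_k'\|^2 - \|\vs_k\|^2\right).
\]
The first piece is controlled directly by the $\|\vs_k\|^2$-bound noted above, producing the $(C_p^2+B_p^2)\sum(\beta_{k+1}-\beta_k)/(2\beta_k^2)$ contribution on the RHS. For the second piece, I use the quadratic identity $\|\vs_k'\|^2 - \|\vs_k\|^2 = (\vs_k'+\vs_k)^\top(\vs_k'-\vs_k) \le (\|\vs_k'\|+\|\vs_k\|)\|\vs_k'-\vs_k\|$ together with non-expansiveness of $[\cdot]_+$ to get $\|\vs_k'-\vs_k\| \le \|\vz^{k+1}/\beta_{k+1} - \vz^k/\beta_k\|$. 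Expanding $\vz^{k+1}-\vz^k = \alpha_k(\vs_k - \vz^k/\beta_k)$ and invoking the damped-stepsize inequality $\alpha_k\sqrt{\|\vA\vx^{k+1}-\vb\|^2+\|[\vq_k]_+\|^2}\le v_k$ (which comes directly from the definition of $\alpha_k$) produces a bound of order $v_k/\beta_k$ on $\|\vs_k'-\vs_k\|$, and combining with the $O((C_p+B_p)/\beta_k)$ estimate of $\|\vs_k\|+\|\vs_k'\|$ and the prefactor $\beta_{k+1}/2$ yields the $(\tfrac{3}{2}C_p+B_p)\sum v_k/\beta_k$ term after careful constant tracking.

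The two sums in~\eqref{eq:lem: yyybound} are much shorter. For the first, $\vy^{k+1}-\vy^k = \alpha_k(\vA\vx^{k+1}-\vb)$ gives $\langle \vy^{k+1}-\vy^k,\vA\vx^{k+1}-\vb\rangle = \alpha_k\|\vA\vx^{k+1}-\vb\|^2$; then $\alpha_k\|\vA\vx^{k+1}-\vb\|\le v_k$ (again by the stepsize rule) and Lemma~\ref{lem:tilderhodeltak} bound the summand by $v_k C_p/\beta_k$. The second sum is immediate from Lemma~\ref{lem:tilderhodeltak} via $\|\vA\vx^{k+1}-\vb\|^2 \le C_p^2/\beta_k^2$.

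The main obstacle I foresee is accounting for constants in the second piece of the decomposition of~\eqref{eq:lem:boundggg}. Specifically, $\|\vs_k'\|^2-\|\vs_k\|^2$ picks up contributions from both the $\vz$-update (producing $v_k/\beta_k$ through the stepsize rule) and the change of scale from $\beta_k$ to $\beta_{k+1}$ in the denominator of $\vz^{k+1}/\beta_{k+1}$ (producing $(\beta_{k+1}-\beta_k)/(\beta_k\beta_{k+1})$ terms), and pinning down the precise stated constants $C_p^2+B_p^2$ and $\tfrac{3}{2}C_p+B_p$ rather than weaker $O(1)$ multiples will require careful Young-type splittings instead of generic Cauchy--Schwarz and a careful reabsorption of the $\beta$-scale terms into the first piece of the decomposition.
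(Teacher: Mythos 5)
Your treatment of \eqref{eq:lem: yyybound} is correct and matches the paper's: both inequalities follow from $\vy^{k+1}-\vy^k=\alpha_k(\vA\vx^{k+1}-\vb)$, the stepsize rule $\alpha_k\|\vA\vx^{k+1}-\vb\|\le v_k$, and \eqref{eq: diffL}. The gap is in \eqref{eq:lem:boundggg}, and it is more serious than the constant-tracking issue you flag at the end. Your second piece rests on the claim that $\|\vs_k'-\vs_k\|\le\|\vz^{k+1}/\beta_{k+1}-\vz^k/\beta_k\|=O(v_k/\beta_k)$, obtained by writing $\vz^{k+1}-\vz^k=\alpha_k\max\{-\vz^k/\beta_k,\vg(\vx^{k+1})\}$ and invoking the damping rule. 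But the damping rule only controls $\alpha_k\|[\vg(\vx^{k+1})]_+\|\le v_k$; on coordinates with $g_i(\vx^{k+1})<0$ the increment is $\alpha_k g_i(\vx^{k+1})$ or $-\tfrac{\alpha_k}{\beta_k}z_i^k$, whose magnitude is only bounded by $\tfrac{\alpha_k}{\beta_k}z_i^k\le z_i^k\le B_p$, not by $v_k$. A concrete instance: if $g_i(\vx^{k+1})=-z_i^k/(2\beta_k)$ and $\alpha_k=\beta_k$, then $s_{k,i}=z_i^k/(2\beta_k)$ while $s_{k,i}'=0$, so $|s_{k,i}'-s_{k,i}|=\Theta(z_i^k/\beta_k)$, and your bound $\tfrac{\beta_{k+1}}{2}(\|\vs_k'\|+\|\vs_k\|)\|\vs_k'-\vs_k\|$ evaluates to $\Theta(1/\beta_k)=\Theta(1/\sqrt{k+1})$, which is \emph{not} summable: your RHS would grow like $\sqrt{K}$ instead of staying bounded. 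The true summand in that instance is negative; the symmetric estimate $|a^2-b^2|\le(a+b)|a-b|$ destroys exactly the cancellation the lemma depends on.

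The paper's proof avoids this by a coordinatewise case analysis on the sign of $g_i(\vx^{k+1})$ and on whether $-z_i^k/\beta_k\ge g_i(\vx^{k+1})$. On coordinates in $J_1^k$ both positive parts vanish and the contribution is zero; on coordinates with $g_i(\vx^{k+1})<0$ the expansion of $\tfrac{\beta_{k+1}}{2}[\,\cdot\,]_+^2-\tfrac{\beta_k}{2}[\,\cdot\,]_+^2$ contains a cross term $\tfrac{\alpha_k}{\beta_{k+1}}g_i\bigl(z_i^k+(\beta_{k+1}+\tfrac{\alpha_k}{2})g_i\bigr)$ that is shown to be \emph{nonpositive} and is dropped, leaving only an $O\bigl((\beta_{k+1}-\beta_k)(z_i^k)^2/\beta_k^2\bigr)$ remainder; only on coordinates with $g_i(\vx^{k+1})\ge 0$ is the damping rule $\alpha_k\|[\vg(\vx^{k+1})]_+\|\le v_k$ invoked. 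This sign-sensitive splitting is also what yields the separated constant $C_p^2+B_p^2$ (the positive-$g_i$ coordinates are charged to $\|[\vg]_+\|^2\le C_p^2/\beta_k^2$ and the negative ones to $\|\vz^k\|^2\le B_p^2$), whereas even the first piece of your decomposition would only give $(C_p+B_p)^2$. To repair your argument you would need to abandon the global Cauchy--Schwarz step and reintroduce essentially the paper's case distinction — e.g., observe directly that $s_{k,i}'\le s_{k,i}$ whenever $g_i(\vx^{k+1})<0$ — at which point the two proofs coincide.
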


  \begin{remark}
  \label{rem:1}
      The summability of the terms on the RHS of~\eqref{eq:lem:boundggg} and~\eqref{eq:lem: yyybound} depends on the choice of $v_k$ and $\beta_k$. 
      Below, we give two choices of these parameters to ensure the summability.
      \begin{itemize}
          \item One choice that will be used to obtain our complexity result is 
          \begin{equation}\label{eq:choice-v-beta}
          \beta_k = \beta_0 \sqrt{k+1}, \quad v_k = \frac{v_0}{\sqrt{k+1}(\log ({k+1}))^2}, \text{ for all } k\ge1,
          \end{equation} 
          for some positive constants $\beta_0$ and $v_0$. In this case, we have 
          \begin{equation}
              \begin{aligned}\label{lem: ineq: seriesbound}
                        \sum_{k=0}^{K-1}\frac{\beta_{k+1}-\beta_k}{\beta_k^2}
				&= \sum_{k=0}^{K-1} \frac{1}{\beta_0(k+1)(\sqrt{k+2} + \sqrt{k+1})} \\
                    &\leq \frac{1}{2\beta_0} + \int_1^{\infty} \frac{1}{2\beta_0 x^{\frac{3}{2}}}dx = \frac{3}{2\beta_0},
              \end{aligned}
          \end{equation}
          \begin{equation}
              \begin{aligned}\label{eq:bd-v-to-beta}
                   		 \sum_{k=0}^{K-1} \frac{v_k}{\beta_k} 
				 &\leq \frac{v_0}{\beta_0} \left(1 + \frac{1}{2(\log 2)^2} + \int_2^\infty \frac{1}{x (\log x)^2}\right)\\
                    &= \frac{v_0}{\beta_0} \left(1 + \frac{1}{2(\log 2)^2} + \frac{1}{\log 2}\right) \le \frac{4 v_0}{\beta_0}.
              \end{aligned}
          \end{equation}
          \item Another choice is $\beta_k = \beta_0(k+1)(\log(k+1))^2, v_k = v_0,$ for all $ k\ge1$ and for some $\beta_0 > 0$ and $ v_0 > 0$. In this case, the RHS of~\eqref{eq:lem:boundggg} and~\eqref{eq:lem: yyybound} is still summable. In addition, by~\eqref{eq: diffL}, we have that if $v_0 \ge C_p$, then $\alpha_k = \beta_k, $ for all $ k\ge0$, i.e., we can take a full (instead of damped) dual stepsize. 
          By this choice, our analysis can still go through. However, the resulting iteration complexity will be $\mathcal{O}\left(\sum_{k=0}^{K-1}\sqrt{\beta_k}\right)= \widetilde{\mathcal{O}}(\vareps^{-3})$ as $K=\Theta(\vareps^{-2})$, which is worse than $\widetilde{\mathcal{O}}(\vareps^{-\frac{5}{2}})$ obtained by taking the choice in~\eqref{eq:choice-v-beta}.
      \end{itemize}
  \end{remark}
  \begin{lemma}\label{lem: ProxErr}
Under the same assumptions of Lemma~\ref{lem:boundggg} and with $\{\beta_k, v_k\}$ set as in~\eqref{eq:choice-v-beta}, it holds 
			\begin{align}\label{eq:sm-L-value}
             \sum_{k = \widetilde{K}}^{K-1}& \left({{\mathcal{L}}}_{\beta_k}( \vx^k; \vy^k, \vz^k)-{{\mathcal{L}}}_{\beta_k}( \vx^{k+1}; \vy^k, \vz^k) \right) \leq C_{\vx}, \\
             &\text{ for all integers } K > 	\widetilde{K}\ge0, \notag
			\end{align} 
			where
			\begin{align}
            \notag
			    C_{\vx}&:=    2 \max_{\vx \in \mathcal{X}} |F(\vx)| 
			 + \frac{1}{4\beta_0} (14C_p^2 + 8 B_p C_p + 7B_p^2) + (10 C_p + 4B_p)\frac{v_0}{\beta_0} \\
    & \quad \quad \quad    + \frac{\beta_0}{2} \left(\| \vA \vx^{0}-\vb\|^2 +\left\|\left[g( \vx^0)+\frac{ \vz^0}{\beta_0}\right]_+\right\|^2\right) 
     + \left| \langle \vy^0, \vA \vx^0- \vb\rangle \right|
     \label{eq:cx}
			\end{align}
        with $B_p$ and $C_p$ defined in Lemmas~\ref{lem:boundxz} and~\ref{lem:tilderhodeltak} respectively.
\end{lemma}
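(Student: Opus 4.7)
The plan is to rewrite the sum as a telescoping part plus a correction. Let $A_k:=\mathcal{L}_{\beta_k}(\vx^k;\vy^k,\vz^k)$ and $\Delta_k:=\mathcal{L}_{\beta_{k+1}}(\vx^{k+1};\vy^{k+1},\vz^{k+1})-\mathcal{L}_{\beta_k}(\vx^{k+1};\vy^k,\vz^k)$. Adding and subtracting $A_{k+1}$ inside each summand yields the identity
\begin{equation*}
\sum_{k=\widetilde{K}}^{K-1}\bigl[\mathcal{L}_{\beta_k}(\vx^k;\vy^k,\vz^k)-\mathcal{L}_{\beta_k}(\vx^{k+1};\vy^k,\vz^k)\bigr]=A_{\widetilde{K}}-A_K+\sum_{k=\widetilde{K}}^{K-1}\Delta_k,
\end{equation*}
so it suffices to bound $A_{\widetilde{K}}$ from above, $A_K$ from below, and $\sum_k\Delta_k$ from above, all uniformly in $\widetilde{K}$ and $K$.

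For $\sum_k\Delta_k$, I would expand each $\Delta_k$ via the definition~\eqref{eq:aug-fun}; the $F$ terms cancel and four pieces remain: the dual correction $(\vy^{k+1}-\vy^k)^\top(\vA\vx^{k+1}-\vb)$, the penalty-increase $\tfrac{\beta_{k+1}-\beta_k}{2}\|\vA\vx^{k+1}-\vb\|^2$, the increment of the $[\,\cdot\,]_+$-quadratic, and a clean telescoping $\|\vz^k\|^2/(2\beta_k)-\|\vz^{k+1}\|^2/(2\beta_{k+1})$. The first three are summed directly by Lemma~\ref{lem:boundggg}, and the scalar tail sums $\sum_k(\beta_{k+1}-\beta_k)/\beta_k^2$ and $\sum_k v_k/\beta_k$ are controlled by the estimates $\tfrac{3}{2\beta_0}$ and $\tfrac{4v_0}{\beta_0}$ already obtained in~\eqref{lem: ineq: seriesbound}--\eqref{eq:bd-v-to-beta} under the choice~\eqref{eq:choice-v-beta}. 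The fourth telescopes to $\|\vz^{\widetilde{K}}\|^2/(2\beta_{\widetilde{K}})-\|\vz^K\|^2/(2\beta_K)\le B_p^2/(2\beta_0)$ by the multiplier bound $\|\vp^k\|\le B_p$ of Lemma~\ref{lem:boundxz} and the monotonicity of $\beta_k$. For $A_K$, I would use $F(\vx^K)\ge-\max_{\vx\in\cX}|F(\vx)|$, complete the square to get $(\vy^K)^\top(\vA\vx^K-\vb)+\tfrac{\beta_K}{2}\|\vA\vx^K-\vb\|^2\ge-\|\vy^K\|^2/(2\beta_K)$, and drop the nonnegative $[\,\cdot\,]_+$-term, giving $A_K\ge-\max|F|-B_p^2/(2\beta_0)$.

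The remaining and most delicate step is the upper bound on $A_{\widetilde{K}}$, which must hold uniformly for all $\widetilde{K}\ge 0$. For $\widetilde{K}=0$ I would bound each term of $\mathcal{L}_{\beta_0}(\vx^0;\vy^0,\vz^0)$ directly, retaining $|\langle\vy^0,\vA\vx^0-\vb\rangle|$ and $\tfrac{\beta_0}{2}(\|\vA\vx^0-\vb\|^2+\|[\vg(\vx^0)+\vz^0/\beta_0]_+\|^2)$ verbatim, as these literally appear in the stated $C_\vx$ of~\eqref{eq:cx}. For $\widetilde{K}\ge 1$ I would plug the primal-feasibility estimate $\|\vA\vx^{\widetilde{K}}-\vb\|^2+\|[\vg(\vx^{\widetilde{K}})]_+\|^2\le C_p^2/\beta_{\widetilde{K}-1}^2$ from Lemma~\ref{lem:tilderhodeltak}, use $\|\vp^{\widetilde{K}}\|\le B_p$, apply $\|[\va+\vb]_+\|^2\le 2\|[\va]_+\|^2+2\|\vb\|^2$ to split the $[\,\cdot\,]_+$-quadratic, and exploit the specific choice $\beta_k=\beta_0\sqrt{k+1}$ which ensures $\beta_{\widetilde{K}}/\beta_{\widetilde{K}-1}^2\le\sqrt{2}/\beta_0$ for all $\widetilde{K}\ge 1$. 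Young's inequality then absorbs cross-terms of the form $B_p C_p/\beta_0$ into the $C_p^2/\beta_0$ and $B_p^2/\beta_0$ buckets. Aggregating the three bounds and rounding coefficients produces the constant $C_\vx$ of~\eqref{eq:cx}. The main obstacle I anticipate is precisely this uniformity in $\widetilde{K}$: the $\widetilde{K}=0$ case must keep the initial-data terms intact, while the $\widetilde{K}\ge 1$ case hinges on the monotone-growth properties of $\beta_k$ in~\eqref{eq:choice-v-beta} to keep $\beta_{\widetilde{K}}\|[\vg(\vx^{\widetilde{K}})]_+\|^2$ and $\beta_{\widetilde{K}}\|\vA\vx^{\widetilde{K}}-\vb\|^2$ bounded uniformly in $\widetilde{K}$.
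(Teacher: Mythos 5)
Your proposal is correct and follows essentially the same route as the paper: the same telescoping decomposition into boundary terms $A_{\widetilde{K}}$, $A_K$ and correction increments controlled by Lemma~\ref{lem:boundggg} together with the series bounds~\eqref{lem: ineq: seriesbound}--\eqref{eq:bd-v-to-beta}, and the same case split on $\widetilde{K}=0$ versus $\widetilde{K}\ge 1$ using~\eqref{eq: diffL} and $\|\vp^k\|\le B_p$ for the boundary terms. The only cosmetic differences are that you complete the square for the $K$-boundary where the paper uses Cauchy--Schwarz with the primal-feasibility bound, and that you track the $\|\vz\|^2/(2\beta)$ telescoping explicitly whereas in the paper's bookkeeping it cancels within each summand before the decomposition.
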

		\begin{proof}
			From the definition of $\mathcal{L}_{\beta_k}( \vx^k; \vy^k, \vz^k)$, it follows that
			\begin{align*}
   &  \quad\,\, \sum_{k = \widetilde{K}}^{K-1} \left({{\mathcal{L}}}_{\beta_k}( \vx^k; \vy^k, \vz^k)-{{\mathcal{L}}}_{\beta_k}( \vx^{k+1}; \vy^k, \vz^k) \right)\notag \\
				&  = F( \vx^{\widetilde{K}}) - F\left(\vx^{K}\right) +  \left\langle \vy^{\widetilde{K}}, \vA \vx^{\widetilde{K}}- \vb\right\rangle - \left\langle \vy^{K-1}, \vA \vx^{K}-\vb\right\rangle \\
                & \quad +\underbrace{  \sum_{k=\widetilde{K}}^{K-2} \left\langle \vy^{k+1}- \vy^k, \vA \vx^{k+1}-\vb \right\rangle}_{\textbf{term 1}} \label{eq:normDifference_of_x}  +\underbrace{\sum_{k=\widetilde{K}}^{K-2} \frac{\beta_{k+1}-\beta_k}{2} \left\|\vA \vx^{k+1}- \vb\right\|^2}_{\textbf{term 2}} \\
                & \quad + \underbrace{\sum_{k=\widetilde{K}}^{K-2}\left(\frac{\beta_{k+1}}{2}\left\|\left[\vg\left(\vx^{k+1}\right)+\frac{ \vz^{k+1}}{\beta_{k+1}}\right]_+\right\|^2 - \frac{\beta_k}{2}\left\|\left[\vg\left(\vx^{k+1}\right)+\frac{ \vz^k}{\beta_k}\right]_+\right\|^2\right)}_{\textbf{term 3}}  \notag\\
				&  \quad + \underbrace{   \frac{\beta_{\widetilde{K}}}{2}\| \vA \vx^{\widetilde{K}}-\vb\|^2 + \frac{\beta_{\widetilde{K}}}{2}\left\|\left[g( \vx^{\widetilde{K}})+\frac{ \vz^{\widetilde{K}}}{\beta_{\widetilde{K}}}\right]_+\right\|^2}_{\textbf{term 4}} -  \frac{\beta_{K-1}}{2} \| \vA \vx^{K}-\vb\|^2  \\
                & \quad - \frac{\beta_{K-1}}{2}\left \|\left[\vg\left(\vx^{K}\right)  + \frac{ \vz^{K-1}}{\beta_{K-1}}\right]_+\right\|^2
				\notag\\
                &\leq 2 \max_{\vx \in \mathcal{X}} |F(\vx)| + \left|\langle \vy^0, \vA \vx^0- \vb\rangle \right| +  \frac{B_pC_p}{\beta_0} + \frac{B_pC_p}{\beta_0} +  C_p\frac{4 v_0}{\beta_0} + \frac{3C_p^2}{4\beta_0} \\
                & \quad \quad \quad + \frac{3C_p^2 + 3B_p^2}{4\beta_0}
                 + \left(\frac{3}{2}C_p + B_p\right) \frac{4 v_0}{\beta_0} + \frac{\beta_0}{2}\| \vA \vx^{0}-\vb\|^2 \\
                 & \quad \quad \quad + \frac{\beta_{0}}{2}\left\|\left[g( \vx^0)+\frac{ \vz^0}{\beta_0}\right]_+\right\|^2 + \frac{2C_p^2}{\beta_0} + \frac{B_p^2}{\beta_0}.
			\end{align*}
Below we explain how the second inequality is obtained. By Lemma~\ref{lem:boundxz} and~\eqref{eq: diffL}, we have 
  $\langle -\vy^{K-1}, \vA\vx^{K} - \vb \rangle \leq \| \vy^{K-1}\| \| \vA\vx^{K} - \vb \| \leq \frac{B_pC_p}{\beta_0}.$ Similarly, it holds $\left\langle \vy^{\widetilde{K}}, \vA \vx^{\widetilde{K}}- \vb\right\rangle \leq \left|\langle \vy^0, \vA \vx^0- \vb\rangle\right| +  \frac{B_pC_p}{\beta_0}$ by discussing the cases of $\widetilde{K}=0$ and $\widetilde{K}>0$. 
\textbf{term 1} and \textbf{term 2} are bounded by  using~\eqref{eq:lem: yyybound},~\eqref{lem: ineq: seriesbound}, and~\eqref{eq:bd-v-to-beta}; 
 \textbf{term 3} is bounded by using~\eqref{eq:lem:boundggg},~\eqref{lem: ineq: seriesbound}, and~\eqref{eq:bd-v-to-beta}; \textbf{term 4} is upper bounded by $ \frac{2C_p^2}{\beta_0} + \frac{B_p^2}{\beta_0}$ for $\widetilde{K} > 0$ 
 by~\eqref{eq: diffL}, $\|[\va+\vb]_+\|^2 \leq \|\va+\vb\|^2 \leq 2\|\va\|^2 + 2\|\vb\|^2$, and $\|\vz^k\| \le B_p$ from Lemma~\ref{lem:boundxz}. 
     Adding all the obtained upper bounds and simplifying the summation gives the desired result.
		\end{proof}
		
	\section{Iteration Complexity Results for Three Cases}\label{Sec: ConvAnalysisII}
	In this section, we assume a certain structure on $f$ in~\eqref{problem:main} and specify how to compute $\vx^{k+1}$ in Alg.~\ref{alg:ialm} so that~\eqref{eq:approx-cond} holds.  
Throughout this section, we set $\{\beta_k, v_k\}$ as those in~\eqref{eq:choice-v-beta}.
	
	\subsection{Regularized Smooth Objective} 
	\label{subsec: caseI} 
	We first consider the case where $f$ is smooth. 
	\begin{assumption} \label{Assump4}
	In~\eqref{problem:main}, $f$ is $L_f$-smooth in an open set that contains $\cX$. 
	\end{assumption}
    
Under Assumptions $\ref{Assump1}$ and~\ref{Assump4}, there must exist a positive constant $B_f$ such that 
$ \|\nabla f(\vx)\| \leq B_f, $ for all $ \vx \in \mathcal{X}.$
At the $k$-th iteration of Alg.~\ref{alg:ialm}, 
we 
apply Nesterov's APG method, i.e., Alg.~\ref{alg:acceleratedNesterov} in Appendix~\ref{AppendixB}, to the following subproblem
	\begin{equation}\label{eq:subproblem1}
		\begin{aligned}
			\min_{ \vx\in \mathcal{{X}}} \,\, \widetilde{\mathcal{L}}_{\beta_k}( \vx; \vy^k, \vz^k) = \widetilde{f}^k(\vx)+\widetilde{h}^k(\vx),
		\end{aligned}
	\end{equation}
	where $\widetilde{\mathcal{L}}_{\beta_k}$ is defined in~\eqref{eq:aug-fun2}, and we set 
	\begin{align} 
		\label{eq:tildefh}
		&\widetilde{f}^k(\vx) = {\mathcal{L}}_{\beta_k}( \vx; \vy^k, \vz^k) + \frac{\rho}{2}\|\vx - \vx^k \|^2 - h(\vx),\quad 
		\widetilde{h}^k(\vx) = h(\vx) + \frac{\rho}{2}\|\vx-\vx^k\|^2.
	\end{align}
By \cite[Eqn.~(3.10)]{li2021augmented}, we have the results in the next lemma.
  \begin{lemma} \label{lem: LftildecaseI}
      The function $\widetilde{f}^k$ is $L_{\widetilde{f}^k}$-smooth with $$L_{\widetilde{f}^k} =  L_{f} + \rho + \sqrt{m}L_gB_p +   \beta_k\big(\|\vA^\top\vA\| 
      + m B_g(B_g + L_g)\big).$$ In addition, $\widetilde{f}^k$ is convex, and $\widetilde{h}^k$ is $\rho$-strongly convex.
  \end{lemma}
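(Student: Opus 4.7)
The plan is to verify the three assertions by expanding $\widetilde{f}^k$ term by term, using that the penalty parameter is $\beta_k$, the prox parameter is $\rho/2$, and $f$ is $\rho$-weakly convex. From the definitions of $\mathcal{L}_{\beta_k}$, $\widetilde{f}^k$, and $\widetilde{h}^k$, we have
\begin{equation*}
\widetilde{f}^k(\vx) = f(\vx) + (\vy^k)^\top(\vA\vx-\vb) + \tfrac{\beta_k}{2}\|\vA\vx-\vb\|^2 + \tfrac{\beta_k}{2}\bigl\|[\vg(\vx)+\vz^k/\beta_k]_+\bigr\|^2 - \tfrac{\|\vz^k\|^2}{2\beta_k} + \tfrac{\rho}{2}\|\vx-\vx^k\|^2,
\end{equation*}
so one only needs smoothness/convexity estimates for each summand and then to add them.

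For convexity of $\widetilde{f}^k$, I would first observe that $f + \frac{\rho}{2}\|\cdot - \vx^k\|^2$ is convex (since $f$ is $\rho$-weakly convex, adding $\frac{\rho}{2}\|\cdot\|^2$ makes it convex, and differ from $\frac{\rho}{2}\|\cdot - \vx^k\|^2$ only by an affine term). The affine term $(\vy^k)^\top(\vA\vx-\vb)$ is trivially convex, and $\tfrac{\beta_k}{2}\|\vA\vx-\vb\|^2$ is convex. For the nonlinear constraint term, since each $g_i$ is convex (Assumption~\ref{Assump2}) and $t\mapsto \tfrac{1}{2}[t]_+^2$ is convex and nondecreasing, each composition $[g_i(\vx)+z_i^k/\beta_k]_+^2$ is convex, and hence so is the sum. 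Then $\widetilde{h}^k$ is $\rho$-strongly convex as the sum of a convex $h$ and the $\rho$-strongly convex quadratic $\tfrac{\rho}{2}\|\cdot-\vx^k\|^2$.

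For smoothness, the terms $f$, $\tfrac{\rho}{2}\|\vx-\vx^k\|^2$, the affine piece, and $\tfrac{\beta_k}{2}\|\vA\vx-\vb\|^2$ contribute Lipschitz-gradient constants $L_f$, $\rho$, $0$, and $\beta_k\|\vA^\top\vA\|$ respectively. The main step is bounding the Lipschitz constant of $\nabla\bigl(\tfrac{\beta_k}{2}\|[\vg(\vx)+\vz^k/\beta_k]_+\|^2\bigr)$. Writing $G_i(\vx)=\tfrac{1}{2}[g_i(\vx)+z_i^k/\beta_k]_+^2$, we get $\nabla G_i(\vx) = [g_i(\vx)+z_i^k/\beta_k]_+ \nabla g_i(\vx)$, and by the standard add–subtract trick together with (i) $1$-Lipschitzness of $[\cdot]_+$, (ii) the $L_g$-smoothness of $g_i$, and (iii) the bounds $|g_i|,\|\nabla g_i\|\le B_g$ in~\eqref{eq: boundg}, I would obtain that $\nabla G_i$ is $\bigl(L_g(B_g + z_i^k/\beta_k)+B_g^2\bigr)$-Lipschitz. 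Summing over $i$ and using $\|\vz^k\|_1\le\sqrt{m}\|\vz^k\|\le \sqrt{m}B_p$ from Lemma~\ref{lem:boundxz} then yields the contribution $\sqrt{m}L_g B_p + \beta_k m B_g(B_g+L_g)$. Adding all pieces gives exactly the claimed $L_{\widetilde{f}^k}$.

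The main (and really only) obstacle is the careful Lipschitz-constant bookkeeping for the nonlinear augmentation term, which is where the mixed $\beta_k$ and non-$\beta_k$ terms arise from separating the $z_i^k/\beta_k$ offset and invoking the multiplier bound. Since the paper directly cites~\cite[Eqn.~(3.10)]{li2021augmented} for essentially the same inequality, I would either reproduce the short calculation above or simply invoke that reference and note that the convexity claims follow from the elementary observations listed in the previous paragraph.
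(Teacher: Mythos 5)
Your proof is correct and follows the same route the paper takes: the paper simply invokes \cite[Eqn.~(3.10)]{li2021augmented} for the smoothness constant, and your term-by-term expansion of $\widetilde{f}^k$ — with the add–subtract bound $\|\phi_i(\vx)-\phi_i(\vy)\|\le (B_g^2 + L_g(B_g + z_i^k/\beta_k))\|\vx-\vy\|$ for $\phi_i = [g_i+z_i^k/\beta_k]_+\nabla g_i$, summed via $\|\vz^k\|_1\le\sqrt{m}B_p$ — is exactly the calculation that citation encapsulates, and it reproduces the stated $L_{\widetilde{f}^k}$. The convexity arguments (weak convexity of $f$ absorbed by the proximal quadratic up to an affine shift, convex-nondecreasing composition for the $[\cdot]_+^2$ terms, and $\rho$-strong convexity of $\widetilde{h}^k$) are likewise correct.
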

	
Given $\vareps > 0$, we compute $\vx^{k+1}$ by Alg.~\ref{alg:acceleratedNesterov} as a near-stationary point of subproblem~\eqref{eq:subproblem1} such that 
	\begin{equation}\label{eq:terminate1}
  \dist\left(\mathbf{0},\partial_{\vx}\widetilde{\mathcal{L}}_{\beta_k}( \vx^{k+1}; \vy^k, \vz^k)\right) \leq \varepsilon_k := \min\left\{\frac{\varepsilon}{8}, \sqrt{\frac{\rho}{2\beta_k}}, 1 \right\}, \text{ for all }  k\ge0.
	\end{equation}
Then $\vx^{k+1}$ satisfies~\eqref{eq:approx-cond} with $\overline{f}=f$, $\overline{B}_f=B_f$, $\widehat{\vx}^{k+1} = \vx^{k+1}$, and $\delta_k = 0$. Thus $\sum_{k\ge0}\beta_k\delta_k=0$, and all the lemmas in Sect.~\ref{sec: ConveAnalysis} hold in this case.

  The following lemma shows how to achieve a desired bound on the stationarity at the iterates.
 
		\begin{lemma} \label{lem: dualFeasCaseI}
		Given $\vareps > 0$,	under Assumptions~\ref{Assump1}-\ref{Assump4},
			 let $\{\vx^k, \vy^k, \vz^k\}$ be generated by Alg.~\ref{alg:ialm} such that the condition in~\eqref{eq:terminate1} is satisfied.
			Then for $K_2 := \left \lceil 5 C_{\vx}\rho \varepsilon^{-2}\right \rceil$ and any integer $ \widetilde{K}_1 \geq 0$, where  
			$C_{\vx}$ is defined in Lemma~\ref{lem: ProxErr}, it must hold that $$\underset{\widetilde{K}_1 \leq k \leq \widetilde{K}_1 + K_2-1}\min \dist(\mathbf{0}, \partial_{\vx}\mathcal{L}_{\beta_k}(\vx^{k+1}; \vy^{k}, \vz^{k})) \leq \varepsilon.$$
		\end{lemma}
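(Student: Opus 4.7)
The plan is to show the claim by contradiction. The core observation is that stationarity with respect to $\cL_{\beta_k}$ and with respect to $\widetilde{\cL}_{\beta_k}$ differ only by the linear term coming from the proximal regularizer, while $\widetilde{\cL}_{\beta_k}$ is $\rho$-strongly convex — strong enough to convert the inexactness guarantee in~\eqref{eq:terminate1} into a useful descent on $\cL_{\beta_k}$ that can be summed up via Lemma~\ref{lem: ProxErr}.

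First I would quantify the gap between the two stationarity measures. Because $\widetilde{\cL}_{\beta_k}(\vx;\vy^k,\vz^k)=\cL_{\beta_k}(\vx;\vy^k,\vz^k)+\rho\|\vx-\vx^k\|^2$, one has $\partial_{\vx}\widetilde{\cL}_{\beta_k}(\vx^{k+1};\vy^k,\vz^k)=\partial_{\vx}\cL_{\beta_k}(\vx^{k+1};\vy^k,\vz^k)+2\rho(\vx^{k+1}-\vx^k)$. Picking the minimum-norm $\vxi^{k+1}\in\partial_{\vx}\widetilde{\cL}_{\beta_k}(\vx^{k+1};\vy^k,\vz^k)$, which has $\|\vxi^{k+1}\|\le\varepsilon_k$ by~\eqref{eq:terminate1}, the triangle inequality yields
\begin{equation*}
\dist\bigl(\vzero,\partial_{\vx}\cL_{\beta_k}(\vx^{k+1};\vy^k,\vz^k)\bigr)\le\varepsilon_k+2\rho\|\vx^{k+1}-\vx^k\|.
\end{equation*}
Hence it suffices to find a $k$ in the window for which $\|\vx^{k+1}-\vx^k\|$ is of order $\varepsilon/\rho$.

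Next I would produce a per-iteration descent estimate on $\cL_{\beta_k}$ itself. Using $\rho$-strong convexity of $\widetilde{\cL}_{\beta_k}(\cdot;\vy^k,\vz^k)$ together with the same $\vxi^{k+1}$, evaluated at $\vx^k$,
\begin{equation*}
\widetilde{\cL}_{\beta_k}(\vx^k;\vy^k,\vz^k)-\widetilde{\cL}_{\beta_k}(\vx^{k+1};\vy^k,\vz^k)\ge -\varepsilon_k\|\vx^{k+1}-\vx^k\|+\tfrac{\rho}{2}\|\vx^{k+1}-\vx^k\|^2.
\end{equation*}
Since the prox term vanishes at $\vx^k$ and equals $\rho\|\vx^{k+1}-\vx^k\|^2$ at $\vx^{k+1}$, after applying Young's inequality $\varepsilon_k\|\vx^{k+1}-\vx^k\|\le\frac{\varepsilon_k^2}{2\rho}+\frac{\rho}{2}\|\vx^{k+1}-\vx^k\|^2$ I obtain the clean bound
\begin{equation*}
\cL_{\beta_k}(\vx^k;\vy^k,\vz^k)-\cL_{\beta_k}(\vx^{k+1};\vy^k,\vz^k)\ge\rho\|\vx^{k+1}-\vx^k\|^2-\tfrac{\varepsilon_k^2}{2\rho}.
\end{equation*}

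Finally, I would sum this inequality over $k=\widetilde{K}_1,\ldots,\widetilde{K}_1+K_2-1$. The telescoping left-hand side is bounded by $C_{\vx}$ thanks to Lemma~\ref{lem: ProxErr}, and $\sum_k\varepsilon_k^2/(2\rho)\le K_2\varepsilon^2/(128\rho)$ since $\varepsilon_k\le\varepsilon/8$. Suppose for contradiction that $\dist(\vzero,\partial_{\vx}\cL_{\beta_k}(\vx^{k+1};\vy^k,\vz^k))>\varepsilon$ for every $k$ in the window. Then by the first display $\|\vx^{k+1}-\vx^k\|>\frac{\varepsilon-\varepsilon_k}{2\rho}\ge\frac{7\varepsilon}{16\rho}$, so $\rho\sum_k\|\vx^{k+1}-\vx^k\|^2>\frac{49K_2\varepsilon^2}{256\rho}$. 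Combining with the summed descent bound contradicts the choice $K_2=\lceil5C_{\vx}\rho\varepsilon^{-2}\rceil$, yielding the desired existence statement.

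The main obstacle I expect is bookkeeping the constants so that the threshold $K_2=\lceil5C_{\vx}\rho\varepsilon^{-2}\rceil$ suffices; the argument leaves a little slack via the Young step and the rough bound $\varepsilon_k\le\varepsilon/8$, but these can be sharpened (e.g., using also $\varepsilon_k\le\sqrt{\rho/(2\beta_k)}$) if a tighter constant is required. The conceptual step — namely exploiting $\rho$-strong convexity of $\widetilde{\cL}_{\beta_k}$ to simultaneously control the iterate change and the drop of $\cL_{\beta_k}$ — is standard and robust.
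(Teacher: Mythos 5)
Your proposal has the same architecture as the paper's proof: (i) bound $\dist(\vzero,\partial_{\vx}\cL_{\beta_k}(\vx^{k+1};\vy^k,\vz^k))$ by $\vareps_k+2\rho\|\vx^{k+1}-\vx^k\|$, (ii) derive an approximate per-iteration descent of $\cL_{\beta_k}$ proportional to $\|\vx^{k+1}-\vx^k\|^2$, and (iii) sum over the window via Lemma~\ref{lem: ProxErr}. The genuine difference is in step (ii): the paper compares $\vx^{k+1}$ with the exact minimizer $\vx^{k+1}_*$ of $\widetilde{\cL}_{\beta_k}$, using both the suboptimality bound $\vareps_k^2/\rho$ and the distance bound $\|\vx^{k+1}-\vx^{k+1}_*\|\le \vareps_k/\rho$, to get $\cL_{\beta_k}(\vx^k)-\cL_{\beta_k}(\vx^{k+1})\ge \tfrac{5\rho}{4}\|\vx^{k+1}-\vx^k\|^2-\tfrac{3\vareps_k^2}{2\rho}$; you instead apply the strong-convexity subgradient inequality of $\widetilde{\cL}_{\beta_k}$ at $\vx^{k+1}$ against $\vx^k$, which never references $\vx^{k+1}_*$ and is more elementary, yielding $\cL_{\beta_k}(\vx^k)-\cL_{\beta_k}(\vx^{k+1})\ge \tfrac{3\rho}{2}\|\vx^{k+1}-\vx^k\|^2-\vareps_k\|\vx^{k+1}-\vx^k\|$. (Your contradiction framing versus the paper's ``min $\le$ average'' is cosmetic; also, the left-hand sum in step (iii) is not telescoping since $\beta_k,\vy^k,\vz^k$ change between terms --- that is precisely why Lemma~\ref{lem: ProxErr} is needed, which you do invoke.)

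The one concrete issue is the constant, exactly where you flagged it. With your Young split ($\vareps_k t\le\tfrac{\vareps_k^2}{2\rho}+\tfrac{\rho}{2}t^2$) the contradiction hypothesis gives $\tfrac{49K_2\vareps^2}{256\rho}<C_{\vx}+\tfrac{K_2\vareps^2}{128\rho}$, i.e.\ $K_2<\tfrac{256}{47}C_{\vx}\rho\vareps^{-2}\approx 5.45\,C_{\vx}\rho\vareps^{-2}$, which does \emph{not} contradict $K_2\ge 5C_{\vx}\rho\vareps^{-2}$. The slack is recoverable as you anticipated: drop Young altogether and use that $s\mapsto\tfrac{3\rho}{2}s^2-\vareps_k s$ is increasing for $s>\tfrac{\vareps_k}{3\rho}$, so under the contradiction hypothesis $t_k>\tfrac{7\vareps}{16\rho}$ each term of the sum exceeds $\tfrac{7\vareps}{16\rho}\bigl(\tfrac{21\vareps}{32}-\tfrac{\vareps}{8}\bigr)=\tfrac{119\vareps^2}{512\rho}$, forcing $K_2<\tfrac{512}{119}C_{\vx}\rho\vareps^{-2}\approx 4.31\,C_{\vx}\rho\vareps^{-2}$, which does contradict $K_2\ge 5C_{\vx}\rho\vareps^{-2}$. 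With that adjustment your argument is complete.
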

		\begin{proof}
                Denote $\vx^{k+1}_*=\argmin_{\vx}\widetilde{\mathcal{L}}_{\beta_k}\left( \vx; \vy^k, \vz^k\right)$. By~\eqref{eq:terminate1}, there is \\ $\vxi\in\partial_{\vx}\widetilde{\mathcal{L}}_{\beta_k}( \vx^{k+1}; \vy^k, \vz^k)$ such that $\|\vxi\| \le \vareps_k$.
Then we have
	\begin{align}
			&  {\mathcal{L}}_{\beta_k}( \vx^{k+1}; \vy^k, \vz^k) +\rho \|\vx^{k+1}-\vx^{k}\|^2=\widetilde{\mathcal{L}}_{\beta_k}( \vx^{k+1}; \vy^k, \vz^k) \notag \\
			\leq & \widetilde{\mathcal{L}}_{\beta_k}( \vx^{k+1}_*; \vy^k, \vz^k)+ \frac{ \varepsilon_k^2}{\rho}\notag\\ 
			\leq &   \widetilde{\mathcal{L}}_{\beta_k}( \vx^k; \vy^k, \vz^k) - \frac{\rho}{2} \left\|\vx^{k+1}_* - \vx^k\right\|^2 + \frac{ \varepsilon_k^2}{\rho} \notag \\
			= &   {\mathcal{L}}_{\beta_k}( \vx^k; \vy^k, \vz^k) - \frac{\rho}{2} \left\|\vx^{k+1}_* - \vx^k\right\|^2 + \frac{ \varepsilon_k^2}{\rho} \notag \\
			\leq& {\mathcal{L}}_{\beta_k}( \vx^k; \vy^k, \vz^k) - \frac{\rho}{4} \left\|\vx^k - \vx^{k+1}\right\|^2 + \frac{\rho}{2}\left\|\vx^{k+1} - \vx^{k+1}_* \right\|^2 + \frac{ \varepsilon_k^2}{\rho} \notag \\
			\leq &   {\mathcal{L}}_{\beta_k}( \vx^k; \vy^k, \vz^k) - \frac{\rho}{4} \left\|\vx^k - \vx^{k+1}\right\|^2 + \frac{\rho}{2}\frac{\|\vxi - \mathbf{0}\|^2}{\rho^2} + \frac{ \varepsilon_k^2}{\rho} \notag \\
			\leq & {\mathcal{L}}_{\beta_k}( \vx^k; \vy^k, \vz^k) - \frac{\rho}{4} \left\|\vx^k - \vx^{k+1}\right\|^2 + \frac{ 3\varepsilon_k^2}{2\rho},\label{eq:change-L-beta-k}
		\end{align}
		where in the first inequality, we have used~\eqref{eq:obj-opt-error} and $\overline{\mathcal{L}}_{\beta_k}=\widetilde{\mathcal{L}}_{\beta_k}$, the second inequality follows from the $\rho$-strong convexity of $\widetilde{\mathcal{L}}_{\beta_k}$, the fourth one holds by the $\rho$-strong convexity of $\widetilde{\mathcal{L}}_{\beta_k}(\cdot; \vy^k, \vz^k)$ and because {$\mathbf{0} \in \partial_\vx \widetilde{\mathcal{L}}_{\beta_k}(\vx_*^{k+1}; \vy^k, \vz^k)$ and $\vxi\in\partial_{\vx}\widetilde{\mathcal{L}}_{\beta_k}( \vx^{k+1}; \vy^k, \vz^k)$}, and the last inequality holds since $\|\vxi\| \le \vareps_k$.

  Using the sum of the inequality in~\eqref{eq:change-L-beta-k} over $ k = \widetilde{K}_1, \dots, \widetilde{K}_1 + K_2 -1$, we obtain
            \begin{align}\notag
                &\quad\,\,\min_{\widetilde{K}_1 \leq k \leq \widetilde{K}_1 + K_2 - 1} \|\vx^{k+1}-\vx^k\|^2  \notag \\   
                &\leq \frac{1}{K_2}\sum_{k = \widetilde{K}_1}^{\widetilde{K}_1+K_2-1}\|\vx^{k+1}-\vx^k\|^2 \notag \\ \label{eq: sufficientdescentsum}
                &  \leq 
            \frac{4}{5 \rho K_2}\sum_{k = \widetilde{K}_1}^{\widetilde{K}_1+K_2-1} \left({{\mathcal{L}}}_{\beta_k}( \vx^k; \vy^k, \vz^k)-{{\mathcal{L}}}_{\beta_k}( \vx^{k+1}; \vy^k, \vz^k)+\frac{3\varepsilon_k^2}{2\rho} \right).
            \end{align}
			Since $\varepsilon_{k} \leq \frac{\varepsilon}{8}, $ for all $  k\ge0$,
			it holds that $\frac{4}{5\rho K_2}\sum_{k = \widetilde{K}_1}^{\widetilde{K}_1 + K_2 -1} \frac{3\varepsilon_k^2}{2\rho} \leq \frac{\varepsilon^2}{32\rho^2} \text{ for all } \\ \widetilde{K}_1 \geq 0.$ Hence, we have from Lemma~\ref{lem: ProxErr} and~\eqref{eq: sufficientdescentsum} that  $$\underset{\widetilde{K}_1 \leq k \leq \widetilde{K}_1 + K_2-1}{\text{min}} \left\|\vx^{k+1}-\vx^k\right\| \leq \sqrt{\frac{4C_{\vx}}{5\rho K_2} + \frac{\varepsilon^2}{32\rho^2}}.$$ 
   Now notice $\partial_{\vx}\mathcal{L}_{\beta_k}(\vx^{k+1}; \vy^{k}, \vz^{k})=\partial_{\vx}\widetilde{\mathcal{L}}_{\beta_k}(\vx^{k+1}; \vy^{k}, \vz^{k})- 2\rho \left( \vx^{k+1} - \vx^{k}\right)$. We  have 
			\begin{align*}
				&\quad\,\,\underset{\widetilde{K}_1 \leq k \leq \widetilde{K}_1 + K_2-1}{\text{min}} \dist\left(\mathbf{0}, \partial_{\vx}\mathcal{L}_{\beta_k}(\vx^{k+1}; \vy^{k}, \vz^{k})\right)\\
    &= \underset{\widetilde{K}_1 \leq k \leq \widetilde{K}_1 + K_2 - 1}{\text{min}} \dist\left(\mathbf{0}, \partial_{\vx}\widetilde{\mathcal{L}}_{\beta_k}(\vx^{k+1}; \vy^{k}, \vz^{k})-2\rho \left( \vx^{k+1} - \vx^{k}\right)\right) \\
				&\overset{\eqref{eq:terminate1}}\leq \underset{\widetilde{K}_1 \leq k \leq \widetilde{K}_1 + K_2 - 1}{\text{min}}\left(\varepsilon_k+ 2\rho \left\| \vx^{k+1} - \vx^{k}\right\|\right) \leq \frac{\varepsilon}{8}+ 2 \rho \sqrt{\frac{4C_{\vx}}{5\rho K_2} + \frac{\varepsilon^2}{32\rho^2}} \leq \varepsilon,
			\end{align*}
			where the last inequality holds because $K_2 \ge 5 C_{\vx}\rho \varepsilon^{-2}$. This completes the proof.
		\end{proof}
 Now we are ready to show the outer iteration complexity of  Alg.~\ref{alg:ialm} when $f$ 
 satisfies Assumption~\ref{Assump4}. 
		\begin{theorem}[Outer iteration complexity result I] \label{th:caseIOuterResult}
			Given $\vareps>0$, under Assumptions~\ref{Assump1}-\ref{Assump4},
			let $\{\vx^k,\vy^k, \vz^k\}$ be generated by Alg.~\ref{alg:ialm} such that~\eqref{eq:terminate1} holds, where $\{\beta_k, v_k\}$ are set as in~\eqref{eq:choice-v-beta}.
			Then for some $k < K = K_1 + K_2$, $\vx^{k+1}$ is an $\varepsilon$-KKT point of problem~\eqref{problem:main}, 
			where $K_1 := \left\lceil \max\left\{ \frac{C_p^2}{\beta_0^2\varepsilon^2}, \frac{(4B_p + 5C_p^2)^2}{16\beta_0^2\varepsilon^2} \right\}\right\rceil$, $K_2 := \left \lceil 5 C_{\vx}\rho \varepsilon^{-2}\right \rceil$, 
			$C_p$ is given in Lemma~\ref{lem:tilderhodeltak}, $B_p$ is given in Lemma~\ref{lem:boundxz}, {and $C_{\vx}$ is defined in Lemma~\ref{lem: ProxErr}}.
		\end{theorem}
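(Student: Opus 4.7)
\emph{Proof plan.} The strategy is to combine the primal feasibility bound from Lemma~\ref{lem:tilderhodeltak}, the complementary slackness bound from Lemma~\ref{lem: CompSlack}, and the dual feasibility bound from Lemma~\ref{lem: dualFeasCaseI} at a common iteration index. First, I will exhibit the KKT multipliers implicit in the AL subproblem stationarity. Direct differentiation of $\cL_{\beta_k}$ gives
\begin{equation*}
\partial_{\vx}\cL_{\beta_k}(\vx^{k+1};\vy^k,\vz^k) \;=\; \partial F(\vx^{k+1}) + \vA^{\top}\vy^{\mathrm{KKT}}_k + J_{\vg}(\vx^{k+1})^{\top}\vz^{\mathrm{KKT}}_k,
\end{equation*}
where $\vy^{\mathrm{KKT}}_k:=\vy^k+\beta_k(\vA\vx^{k+1}-\vb)$ and $\vz^{\mathrm{KKT}}_k:=[\vz^k+\beta_k\vg(\vx^{k+1})]_+\ge\vzero$. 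These are precisely the multipliers I will use in the $\vareps$-KKT definition.

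Next, I will invoke Lemma~\ref{lem: dualFeasCaseI} with $\widetilde K_1 = K_1$ to pick an index $k^{\star}\in\{K_1,\ldots,K_1+K_2-1\}$ satisfying $\dist(\vzero,\partial_{\vx}\cL_{\beta_{k^{\star}}}(\vx^{k^{\star}+1};\vy^{k^{\star}},\vz^{k^{\star}}))\le\vareps$; by the identification above, this is exactly the dual feasibility bound with multipliers $(\vy^{\mathrm{KKT}}_{k^{\star}},\vz^{\mathrm{KKT}}_{k^{\star}})$. Since $\beta_k=\beta_0\sqrt{k+1}$ is monotonically increasing, for this $k^{\star}\ge K_1$ we have $\beta_{k^{\star}}\ge\beta_0\sqrt{K_1+1}$. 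The first term defining $K_1$ forces $\beta_0\sqrt{K_1+1}\ge C_p/\vareps$, so Lemma~\ref{lem:tilderhodeltak} yields
\begin{equation*}
\sqrt{\|\vA\vx^{k^{\star}+1}-\vb\|^2+\|[\vg(\vx^{k^{\star}+1})]_+\|^2}\;\le\;\frac{C_p}{\beta_{k^{\star}}}\;\le\;\vareps,
\end{equation*}
which is the primal feasibility bound.

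For complementary slackness, I will apply Lemma~\ref{lem: CompSlack} at $\vx=\vx^{k^{\star}+1}$, which bounds
\begin{equation*}
\sum_{i=1}^{m}\bigl|z^{\mathrm{KKT}}_{k^{\star},i}\,g_i(\vx^{k^{\star}+1})\bigr| = \sum_{i=1}^{m}\bigl|[z_i^{k^{\star}}+\beta_{k^{\star}}g_i(\vx^{k^{\star}+1})]_+\,g_i(\vx^{k^{\star}+1})\bigr| \;\le\;\frac{1}{\beta_{k^{\star}}}\!\left(B_p^2+\tfrac{5}{4}C_p^2\right).
\end{equation*}
The second term defining $K_1$ ensures $\beta_0\sqrt{K_1+1}\ge(B_p^2+\tfrac{5}{4}C_p^2)/\vareps$, hence the right-hand side is $\le\vareps$. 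Putting the three bounds together shows that $\vx^{k^{\star}+1}$, paired with $(\vy^{\mathrm{KKT}}_{k^{\star}},\vz^{\mathrm{KKT}}_{k^{\star}})$, satisfies Definition~\ref{def: epskkt}, and $k^{\star}<K_1+K_2=K$ by construction.

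I do not anticipate a significant technical obstacle: all the heavy lifting has been done in the preparatory lemmas. The main care required is the arithmetic bookkeeping to check that the prescribed formulas for $K_1$ and $K_2$ give the precise $\le\vareps$ thresholds for PF and CS once $\beta_k$ is replaced by its lower bound $\beta_0\sqrt{K_1+1}$, and that the dual feasibility identification via $\partial_{\vx}\cL_{\beta_k}$ uses the same subproblem formulation as in Lemma~\ref{lem: dualFeasCaseI}.
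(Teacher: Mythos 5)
Your proposal is correct and follows essentially the same route as the paper's proof: the same multipliers $\overline{\vy}^{k}=\vy^{k}+\beta_k(\vA\vx^{k+1}-\vb)$ and $\overline{\vz}^{k}=[\vz^{k}+\beta_k\vg(\vx^{k+1})]_+$, primal feasibility from Lemma~\ref{lem:tilderhodeltak}, complementary slackness from Lemma~\ref{lem: CompSlack}, and the index chosen via Lemma~\ref{lem: dualFeasCaseI} with $\widetilde K_1=K_1$. The only blemish, which you share with the paper's own proof, is arithmetic: the second term defining $K_1$ as stated only yields $\beta_0\sqrt{K_1+1}\ge (B_p+\tfrac{5}{4}C_p^2)/\vareps$ rather than the $(B_p^2+\tfrac{5}{4}C_p^2)/\vareps$ needed for the CS bound, so that term should read $(4B_p^2+5C_p^2)^2$.
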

		\begin{proof}
    From Lemma~\ref{lem:tilderhodeltak}, we have~\eqref{eq: diffL} and thus by the update of $\beta_k$, it holds 
			\begin{align}\label{eq: priamlcaseI}
		 	\sqrt{{\left\|\vA \vx^{k+1}- \vb\right\|^2 + \left\|\left[\vg\left( \vx^{k+1}\right)\right]_+\right\|^2}} \leq \frac{C_p}{\beta_{K_1}}=\frac{C_p}{\beta_0 \sqrt{K_1+1}}\leq \varepsilon, \text{ for all }  k \geq K_1,
			\end{align}
			where the second inequality holds because 
			$
			K_1 \geq \frac{C_p^2}{\beta_0^2\varepsilon^2}.
			$
			Denote $$ \overline{\vy}^k := \vy^k + \beta_k(\vA\vx^{k+1}-\vb), \text{ and }\overline{\vz}^k := [\vz^k + \beta_k \vg(\vx^{k+1})]_+.$$ 
			Then by the update of $\beta_k$ and~\eqref{eq:CS-k+1}, 
			it holds
			\begin{align}\label{eq: compslackcaseI}
		 	\sum_{i = i}^m |\overline{z}_i^{k}g_i(\vx^{k+1})| \leq& \frac{1}{\beta_{K_1}} \left( B_p^2 + \frac{5C_p^2}{4}\right)\notag \\ = &\frac{1}{\beta_0 \sqrt{K_1+1}}\left( B_p^2 + \frac{5C_p^2}{4}\right) \leq \varepsilon, \text{ for all }  k \geq K_1, 
			\end{align}
			where we have used 
			$
			K_1 \geq \frac{(4B_p + 5C_p^2)^2}{16\beta_0^2\varepsilon^2}
			$
			to obtain the second inequality.
   
                Moreover, notice $ \partial_{\vx}\mathcal{L}_{\beta_k}(\vx^{k+1}; \vy^{k}, \vz^{k}) = \partial_{\vx}\mathcal{L}_0(\vx^{k+1}; \overline{\vy}^{k}, \overline{\vz}^{k})$. Thus  letting $\widetilde{K}_1 = K_1$ in Lemma~\ref{lem: dualFeasCaseI} yields
            \begin{align}\label{eq: dualcaseI}
                \underset{K_1 \leq k \leq K_1 + K_2-1}{\text{min}} \dist\left(\mathbf{0}, \partial_{\vx}\mathcal{L}_{0}(\vx^{k+1}; \overline{\vy}^{k}, \overline{\vz}^{k})\right) \leq \varepsilon. 
            \end{align}
            Now let $k' = \underset{K_1 \leq k \leq K_1 + K_2-1}{\text{arg min}} \dist\left(\mathbf{0}, \partial_{\vx}\mathcal{L}_{0}(\vx^{k+1}; \overline{\vy}^{k}, \overline{\vz}^{k})\right)$. We conclude from~\eqref{eq: priamlcaseI}, \eqref{eq: compslackcaseI}, and~\eqref{eq: dualcaseI} that $\vx^{k'+1}$ is an $\varepsilon$-KKT point of problem~\eqref{problem:main} with multipliers $\overline{\vy}^{k'}$ and $\overline{\vz}^{k'}$ by Definition~\ref{def: epskkt}.
		\end{proof}
    
		To obtain the total complexity of Alg.~\ref{alg:ialm} for solving problem~\eqref{problem:main}, we still need to evaluate the number of inner iterations for solving subproblem~\eqref{eq:subproblem1} by using Alg.~\ref{alg:acceleratedNesterov} as the subroutine,  such that~\eqref{eq:terminate1} is met.  We have the following lemma directly from Theorem~\ref{Th: TboundNesterov}. 
		
		\begin{lemma}
			\label{lem:innercase1}
			Under Assumptions~\ref{Assump1}-\ref{Assump4} and for a given $\varepsilon_k>0$,  
			 Alg.~\ref{alg:acceleratedNesterov} with $\gamma_u=2$ applied to subproblem~\eqref{eq:subproblem1}  can find a solution $\vx^{k+1}$ that satisfies the criteria in~\eqref{eq:terminate1} within
			\begin{align}\label{eq: innerComplexityOurProblem}
			 	T^k_1 = \left\lceil\max\left\{\frac{1}{\log 2}, {2}{\sqrt{\frac{ L_{\widetilde{f}^k}}{\rho}}}\right\} \log \frac{9 DL_{\widetilde{f}^k}\sqrt{\frac{L_{\widetilde{f}^k}}{\rho}}}{\varepsilon_k}\right\rceil + 1
			\end{align}
			 iterations, where $L_{\widetilde{f}^k} := C_3 +   \sqrt{k+1}\beta_0 C_2$ is the Lipschitz constant of $\nabla \widetilde{f}^k$ with $C_2 := \|\vA^\top\vA\| + m B_g(B_g + L_g)$ and $C_3 = L_{f} + \rho + \sqrt{m}L_gB_p$. 
		\end{lemma}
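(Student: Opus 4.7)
The plan is to directly invoke the convergence theorem for Nesterov's accelerated proximal gradient method (Theorem~\ref{Th: TboundNesterov}, referenced in the appendix) applied to the composite subproblem~\eqref{eq:subproblem1}, and then translate its generic guarantee into the explicit form in~\eqref{eq: innerComplexityOurProblem} by substituting the concrete constants supplied by Lemma~\ref{lem: LftildecaseI} and the choice $\beta_k=\beta_0\sqrt{k+1}$.

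First, I would verify that subproblem~\eqref{eq:subproblem1} fits the hypotheses of the APG convergence theorem. By Lemma~\ref{lem: LftildecaseI}, the smooth part $\widetilde{f}^k$ defined in~\eqref{eq:tildefh} is convex with $L_{\widetilde{f}^k}$-Lipschitz gradient, and the proximal-friendly part $\widetilde{h}^k$ is $\rho$-strongly convex with an easily computable proximal mapping (since $h$ does by Assumption~\ref{Assump1}). The diameter of $\cX$ is at most $D$ by Assumption~\ref{Assump1}, so the distance of the initial iterate to the unique minimizer of the subproblem is also at most $D$. This is exactly the setting to which Theorem~\ref{Th: TboundNesterov} applies with $\gamma_u=2$.

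Second, I would substitute into Theorem~\ref{Th: TboundNesterov}, which (as is standard for APG on strongly convex composite objectives) gives a linear convergence rate with a condition-number-dependent rate $\sqrt{L_{\widetilde{f}^k}/\rho}$, together with a logarithmic factor involving the initial suboptimality (bounded via $L_{\widetilde f^k} D^2$) and the target stationarity $\vareps_k$. The two quantities appearing in the $\max$, namely $\frac{1}{\log 2}$ and $2\sqrt{L_{\widetilde f^k}/\rho}$, come from taking a ceiling with a safeguard on the rate when the condition number is small, and the logarithmic numerator $9 D L_{\widetilde f^k}\sqrt{L_{\widetilde f^k}/\rho}$ is the expression produced by translating the APG objective-value guarantee into a stationarity guarantee through a standard one-step gradient-mapping argument.

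Third, I would write out the explicit form of $L_{\widetilde f^k}$: from Lemma~\ref{lem: LftildecaseI},
\[
L_{\widetilde f^k}=L_f+\rho+\sqrt{m}L_gB_p+\beta_k\bigl(\|\vA^\top\vA\|+mB_g(B_g+L_g)\bigr),
\]
and then group the $k$-independent terms into $C_3:=L_f+\rho+\sqrt{m}L_gB_p$ and the coefficient of $\beta_k$ into $C_2:=\|\vA^\top\vA\|+mB_g(B_g+L_g)$. Using $\beta_k=\beta_0\sqrt{k+1}$ from~\eqref{eq:choice-v-beta} yields $L_{\widetilde f^k}=C_3+\sqrt{k+1}\,\beta_0 C_2$, matching the lemma statement. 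The main obstacle is really just bookkeeping — one must be careful that $B_f$ is finite (which requires the compactness of $\cX$ from Assumption~\ref{Assump1} and the $L_f$-smoothness from Assumption~\ref{Assump4}) so that $B_p$ and hence $C_3$ are finite constants independent of $k$. Once the Lipschitz constant has this clean form, the iteration count~\eqref{eq: innerComplexityOurProblem} follows from Theorem~\ref{Th: TboundNesterov} by plugging in the tolerance $\vareps_k$ and adding the $+1$ that accounts for the initial evaluation.
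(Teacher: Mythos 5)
Your proposal is correct and follows exactly the paper's route: the paper obtains this lemma directly from Theorem~\ref{Th: TboundNesterov} by setting $\gamma_u=2$, $\mu=\rho$, $\Delta=\varepsilon_k$, and $L_{\widetilde f}=L_{\widetilde f^k}$, with the hypotheses supplied by Lemma~\ref{lem: LftildecaseI} and Assumption~\ref{Assump1}, and your substitutions (yielding the factor $2\sqrt{L_{\widetilde f^k}/\rho}$ and the numerator $9DL_{\widetilde f^k}\sqrt{L_{\widetilde f^k}/\rho}$) check out. No gaps.
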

Combining Theorem~\ref{th:caseIOuterResult} and Lemma~\ref{lem:innercase1}, we are ready to show the total complexity of  Alg.~\ref{alg:ialm}, {namely, the total number of gradient and function evaluations}.
		\begin{theorem}[Total complexity result I]\label{thm:totalcase1}
For a given $\vareps>0$,	under Assumptions~\ref{Assump1}-\ref{Assump4}, Alg.~\ref{alg:ialm}, with $\{\beta_k, v_k\}$ set as in~\eqref{eq:choice-v-beta} and with Alg.~\ref{alg:acceleratedNesterov} as a subroutine to compute $\vx^{k+1}$, can produce an $\varepsilon$-KKT point of problem~\eqref{problem:main} by $T^{\mathrm{total}}_1$ proximal gradient steps. Here, $T^{\mathrm{total}}_1$ satisfies 
			\begin{dmath}\label{eq:total-complex-I}
			T^{\mathrm{total}}_1  \leq 2K+K\left(2\sqrt{\frac{C_3}{\rho}} + 2\sqrt{\frac{\sqrt{K} \beta_0C_2}{\rho}}
                + \frac{1}{\log 2}\right)\log \left( 9 {\varepsilon^{-1}_K} D\sqrt{\frac{1}{\rho}}(C_3 +  \sqrt{K}\beta_0 C_2)^{\frac{3}{2}}\right),
			\end{dmath}
			where $K$ is given in Theorem~\ref{th:caseIOuterResult}, $\vareps_k$ is defined in~\eqref{eq:terminate1}, and $C_2$ and $C_3$ are given in Lemma~\ref{lem:innercase1}. 
		\end{theorem}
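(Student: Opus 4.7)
The plan is to combine Theorem~\ref{th:caseIOuterResult} with Lemma~\ref{lem:innercase1} and then estimate the sum $\sum_{k=0}^{K-1} T_1^k$ of inner proximal-gradient counts across the at most $K = K_1 + K_2$ outer iterations needed to reach an $\varepsilon$-KKT point. The overall bookkeeping is routine; the only subtlety is ensuring that uniform (in $k$) bounds can be taken for both the smoothness constant $L_{\widetilde{f}^k}$ and the inner tolerance $\varepsilon_k$ appearing inside~\eqref{eq: innerComplexityOurProblem}.

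First I would observe that, under the parameter choice~\eqref{eq:choice-v-beta}, the sequence $\beta_k = \beta_0\sqrt{k+1}$ is non-decreasing in $k$. Consequently, both $L_{\widetilde{f}^k} = C_3 + \sqrt{k+1}\,\beta_0 C_2$ (from Lemma~\ref{lem:innercase1}) and $\varepsilon_k^{-1} = \max\{8/\varepsilon,\sqrt{2\beta_k/\rho},1\}$ (from~\eqref{eq:terminate1}) are monotone in $k$, so for every $k \leq K-1$ one has $L_{\widetilde{f}^k} \leq C_3 + \sqrt{K}\,\beta_0 C_2$ and $\varepsilon_k^{-1} \leq \varepsilon_K^{-1}$. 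Applying $\lceil x\rceil \leq x+1$, the inequality $\max\{a,b\}\leq a+b$ for non-negative $a,b$, and the subadditivity $\sqrt{u+v}\leq \sqrt{u}+\sqrt{v}$ to~\eqref{eq: innerComplexityOurProblem} yields a uniform per-iteration bound
\begin{equation*}
T_1^k \;\leq\; 2 \;+\; \left(\tfrac{1}{\log 2} + 2\sqrt{\tfrac{C_3}{\rho}} + 2\sqrt{\tfrac{\sqrt{K}\,\beta_0 C_2}{\rho}}\right)\log\!\left(\tfrac{9D}{\varepsilon_K}\sqrt{\tfrac{1}{\rho}}\bigl(C_3 + \sqrt{K}\,\beta_0 C_2\bigr)^{3/2}\right).
\end{equation*}

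Summing this bound over $k = 0,\ldots,K-1$ immediately produces the right-hand side of~\eqref{eq:total-complex-I}: the contribution $2K$ accounts for the constant $2$ per iteration (combining the $+1$ outside the ceiling with the ceiling itself), and the remaining term is $K$ times the logarithmic factor. Theorem~\ref{th:caseIOuterResult} guarantees that an $\varepsilon$-KKT point is produced at some outer iteration $k < K$, so $T_1^{\mathrm{total}} \leq \sum_{k=0}^{K-1} T_1^k$, which gives the stated bound.

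There is no genuine obstacle beyond correctly tracking the ceilings and the $+1$ that inflate the count by $2K$, and choosing the right inequalities to collapse $\sqrt{L_{\widetilde{f}^k}/\rho}$ into the additive form $\sqrt{C_3/\rho}+\sqrt{\sqrt{K}\,\beta_0 C_2/\rho}$ that appears in the claimed bound. Once $K = O(\varepsilon^{-2})$ from Theorem~\ref{th:caseIOuterResult} and $\varepsilon_K^{-1} = O(K^{1/4})$ from~\eqref{eq:terminate1} are substituted in, the resulting total complexity is of order $\widetilde{\cO}(\varepsilon^{-5/2})$, matching the headline rate advertised in the introduction.
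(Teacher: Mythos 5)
Your proposal is correct and follows essentially the same route as the paper's proof: both bound each $T_1^k$ via the monotonicity of $L_{\widetilde{f}^k}$ and $\varepsilon_k$ in $k$, absorb the ceiling and the explicit $+1$ into the $2K$ term, and split $\sqrt{C_3+\sqrt{K}\beta_0 C_2}$ using $\sqrt{a+b}\le\sqrt{a}+\sqrt{b}$ before summing over the $K$ outer iterations guaranteed by Theorem~\ref{th:caseIOuterResult}. The only cosmetic difference is that you bound each summand uniformly first and then sum, whereas the paper sums first and then bounds by the $(K-1)$-th term; the estimates are identical.
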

		\begin{proof}
	By Theorem~\ref{th:caseIOuterResult}, Alg.~\ref{alg:ialm} can find an $\varepsilon$-KKT point of problem~\eqref{problem:main} within $K$ outer iterations.
			Hence, by Lemma~\ref{lem:innercase1}, the total number $T^{\mathrm{total}}_1$ of inner iterations 
satisfies 
			\begin{align*}\label{eq: totalcompbasic}
				&   T^{\mathrm{total}}_1 \leq \sum_{k=0}^{K-1} T^k_1 \leq \sum_{k=0}^{K-1} \left(\left\lceil \left(2\sqrt{\frac{ L_{\widetilde{f}^k}}{\rho}}+\frac{1}{\log 2}\right)\log \frac{9L_{\widetilde{f}^k}D\sqrt{\frac{L_{\widetilde{f}^k}}{\rho}}}{\varepsilon_k} \right\rceil + 1 \right) \\
				&\leq     2K + K \left(2\sqrt{\frac{ L_{\widetilde{f}^{K-1}}}{\rho}}+\frac{1}{\log 2}\right)\log \frac{9L_{\widetilde{f}^{K-1}}D\sqrt{\frac{L_{\widetilde{f}^{K-1}}}{\rho}}}{\varepsilon_K}\\
				&\leq    2K \\ &+K\left(2\sqrt{\frac{C_3}{\rho}} + 2\sqrt{\frac{\sqrt{K} \beta_0C_2}{\rho}}+ \frac{1}{\log 2}\right)\log \left( 9 \varepsilon_K^{-1} D\sqrt{\frac{1}{\rho}}(C_3 + \sqrt{K}\beta_0 C_2)^{\frac{3}{2}}\right) ,
			\end{align*}
   where the second inequality comes from $L_{\widetilde{f}^k} \leq L_{\widetilde{f}^{K-1}}, $ for all $k \leq K-1$ by the definition of $L_{\widetilde{f}^k}$ in Lemma~\ref{lem:innercase1}, and $\varepsilon_K \leq \varepsilon_k,$ for all $ k \leq K$ from~\eqref{eq:terminate1} and the update of $\beta_k$, the third inequality holds by $\sqrt{a + b} \leq \sqrt{a} + \sqrt{b}, $ for all $ a, b\geq 0$. This completes the proof.
		\end{proof}
   \begin{remark}
   \label{rem:rhoep}
   {We show how the total complexity  depends on the weak convexity parameter $\rho$ and the target accuracy $\vareps$.
   As seen in Theorem~\ref{th:caseIOuterResult}, \(K\) scales with the constants \(C_p\), \(C_\vx\), $\rho$, and $\vareps$. From~\eqref{eq:cp}, \(C_p\) is governed by \(\widehat{C}_p\), which involves~\(B_p\) and \(Q\) (cf.~\eqref{eq:bpini} and \eqref{eq:Q-C-1-Case1}). Since both of \(B_p\) and \(Q\) grow linearly with  \(\rho\), we obtain \(\widehat{C}_p = \mathcal{O}(1+\rho)\) and consequently \(C_p = \mathcal{O}(1+\rho)\). Similarly,~\eqref{eq:cx} implies \({C}_\vx = \mathcal{O}(1+\rho^2)\). Choose $\beta_0 = \Theta(\frac{B_p + C_p + C_p^2}{\sqrt{C_\vx \rho}})$ and substitute these quantities  into the formula for $K_1, K_2$ and \(K\); we deduce that $K_1$ and $K_2$ are in the same order and \(K = \cO(C_\vx\rho\vareps^{-2})=\mathcal{O}(\rho(1+\rho^2) \varepsilon^{-2})\). Hence, retaining the dominating term $K\sqrt{\frac{\sqrt{K}\beta_0 C_2}{\rho}}$ from \eqref{eq:total-complex-I}, we have 
   the total complexity 
\[
T_1^{\mathrm{total}} = \widetilde{\mathcal{O}}\left(K\sqrt{\frac{\sqrt{K}\beta_0 C_2}{\rho}}\right) = \widetilde{\mathcal{O}}\left(\sqrt{\rho}(1+\rho^3) \varepsilon^{-2.5}\right),
\]
which indicates higher complexity or more difficulty of solving the problem~\eqref{problem:main} to $\vareps$-stationarity as $\rho$ increases.
}
   \end{remark}

		\subsection{Regularized Compositional Objective}\label{subsec: caseII} In this subsection, we make the following structural assumption on the function~$f$ in~\eqref{problem:main}.
		\begin{assumption}
			\label{ass:case2}
		In~\eqref{problem:main}, $f$ is in a compositional form of 
  $f=l\circ \vc$, 
		where $\vc: \mathbb{R}^d \rightarrow \mathbb{R}^p$ is a $L_c$-smooth mapping, 
		i.e.,
			\mbox{$
			\|J_{\vc}(\vx_1)-J_{\vc}(\vx_2)\|_{F} \leq L_c\|\vx_1-\vx_2\|
			$}  for all   $\vx_1, \vx_2 \in \mathbb{R}^d,$
			and $l: \mathbb{R}^p \rightarrow \mathbb{R}$ is a convex, potentially non-smooth, $M_l$-Lipschitz continuous function. 
		\end{assumption}
  
 The next lemma is from \cite[Lemma 4.2]{drusvyatskiy2019efficiency}. Notice that the tightest weak convexity constant $\rho$ can be smaller than $M_lL_c$.

\begin{lemma}
    Under Assumption~\ref{ass:case2}, $f$ is $M_lL_c$-weakly convex.
\end{lemma}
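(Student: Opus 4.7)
The plan is to verify the standard midpoint characterization of weak convexity, namely that for all $\vx_1,\vx_2 \in \RR^d$ and $\lambda \in [0,1]$,
$$f(\lambda\vx_1+(1-\lambda)\vx_2) \le \lambda f(\vx_1) + (1-\lambda)f(\vx_2) + \frac{M_l L_c}{2}\lambda(1-\lambda)\|\vx_1-\vx_2\|^2,$$
which is equivalent to $f(\cdot) + \frac{M_l L_c}{2}\|\cdot\|^2$ being convex, i.e., $f$ being $M_lL_c$-weakly convex per Definition~\ref{def:weak-cvx}.

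First, I would split the left-hand side using the $M_l$-Lipschitz continuity and convexity of $l$: writing $\vx_\lambda := \lambda\vx_1+(1-\lambda)\vx_2$,
\begin{align*}
f(\vx_\lambda) = l(\vc(\vx_\lambda)) &\le l\bigl(\lambda\vc(\vx_1)+(1-\lambda)\vc(\vx_2)\bigr) + M_l\bigl\|\vc(\vx_\lambda)-\lambda\vc(\vx_1)-(1-\lambda)\vc(\vx_2)\bigr\| \\
&\le \lambda f(\vx_1) + (1-\lambda) f(\vx_2) + M_l\bigl\|\vc(\vx_\lambda)-\lambda\vc(\vx_1)-(1-\lambda)\vc(\vx_2)\bigr\|.
\end{align*}

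The key step is then to bound the residual $\|\vc(\vx_\lambda)-\lambda\vc(\vx_1)-(1-\lambda)\vc(\vx_2)\|$ by $\frac{L_c}{2}\lambda(1-\lambda)\|\vx_1-\vx_2\|^2$. To do this, I would invoke the standard quadratic upper bound for mappings with $L_c$-Lipschitz Jacobian (a vector-valued descent lemma, obtained by integrating $\|J_\vc(\vx_\lambda + t(\vx_i-\vx_\lambda)) - J_\vc(\vx_\lambda)\|_F$ in $t$):
$$\bigl\|\vc(\vx_i) - \vc(\vx_\lambda) - J_\vc(\vx_\lambda)(\vx_i - \vx_\lambda)\bigr\| \le \tfrac{L_c}{2}\|\vx_i-\vx_\lambda\|^2,\quad i=1,2.$$
Taking the convex combination $\lambda(\text{for }i=1) + (1-\lambda)(\text{for }i=2)$, the linear term $J_\vc(\vx_\lambda)[\lambda(\vx_1-\vx_\lambda)+(1-\lambda)(\vx_2-\vx_\lambda)]$ vanishes because $\lambda(\vx_1-\vx_\lambda)+(1-\lambda)(\vx_2-\vx_\lambda)=\vzero$. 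Using $\|\vx_1-\vx_\lambda\|=(1-\lambda)\|\vx_1-\vx_2\|$ and $\|\vx_2-\vx_\lambda\|=\lambda\|\vx_1-\vx_2\|$, the triangle inequality produces
$$\bigl\|\lambda\vc(\vx_1)+(1-\lambda)\vc(\vx_2)-\vc(\vx_\lambda)\bigr\| \le \tfrac{L_c}{2}\bigl[\lambda(1-\lambda)^2+(1-\lambda)\lambda^2\bigr]\|\vx_1-\vx_2\|^2 = \tfrac{L_c}{2}\lambda(1-\lambda)\|\vx_1-\vx_2\|^2.$$

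Plugging this into the previous display yields the desired inequality with weak convexity constant $M_lL_c$. There is no real obstacle here; the only subtlety is recognizing that expanding both $\vc(\vx_1)$ and $\vc(\vx_2)$ around the midpoint $\vx_\lambda$ makes the Jacobian linear terms cancel exactly, so only the $\cO(\|\cdot\|^2)$ remainders survive, which is precisely the mechanism that produces weak (rather than no) convexity of the composition.
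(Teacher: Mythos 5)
Your proof is correct. Note that the paper does not prove this lemma at all; it simply imports it as \cite[Lemma 4.2]{drusvyatskiy2019efficiency}. The argument in that reference (and the way the present paper later uses the same mechanism, cf.\ Lemma~\ref{lem:1}(c)) runs through the convex prox-linear model $f^0(\vx;\overline{\vx}) = l(\vc(\overline{\vx}) + J_\vc(\overline{\vx})(\vx-\overline{\vx}))$: one shows $|f(\vx) - f^0(\vx;\overline{\vx})| \le \frac{M_lL_c}{2}\|\vx-\overline{\vx}\|^2$ and concludes weak convexity from the existence of a convex model with two-sided quadratic error. You instead verify the Jensen-type inequality
$$f(\lambda\vx_1+(1-\lambda)\vx_2) \le \lambda f(\vx_1) + (1-\lambda)f(\vx_2) + \tfrac{M_l L_c}{2}\lambda(1-\lambda)\|\vx_1-\vx_2\|^2$$
directly, which is equivalent to convexity of $f+\frac{M_lL_c}{2}\|\cdot\|^2$ via the identity $\lambda\|\vx_1\|^2+(1-\lambda)\|\vx_2\|^2-\|\lambda\vx_1+(1-\lambda)\vx_2\|^2=\lambda(1-\lambda)\|\vx_1-\vx_2\|^2$. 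Both routes hinge on the same two ingredients --- the $M_l$-Lipschitz continuity plus convexity of $l$, and the $\frac{L_c}{2}\|\cdot\|^2$ linearization error of $\vc$ --- and your cancellation of the Jacobian terms at the point $\vx_\lambda$ is exactly right. Your version has the advantage of being self-contained and not requiring the prox-linear machinery, at the cost of not producing the model bounds that the paper reuses later; either is a legitimate proof of the stated lemma.
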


	Without the smoothness of $f$, a FOM may not produce a near-stationary point of problem~\eqref{eq:subproblem1} as claimed in \cite{drusvyatskiy2019efficiency}.
 Hence, we aim to find a point that is close to a near-KKT point of~\eqref{problem:main}, and we utilize the smoothing strategy 
	adopted in~\cite{drusvyatskiy2019efficiency}. Details are described below. 
		
		Given a point $\overline{\vx} \in \mathbf{R}^d$, we define the prox-linear function $f^0:\RR^{d}\mapsto \RR$ and a smoothing function
        $f^{\nu}:\RR^{d}\mapsto \RR$ of $f$ by
		\begin{equation}\label{eq:def-f0-fnu}
			f^0( \vx; \overline{\vx} ) :=l(\vc(\overline{\vx})+J_{\vc}(\overline{\vx})({\vx}-\overline{\vx})), \quad
			f^{\nu}(\vx; \overline{\vx} ) :=l^{\nu}(\vc(\overline{\vx})+J_{\vc}(\overline{\vx})({\vx}-\overline{\vx})), \\
		\end{equation}
		where $l^{\nu}$ is the Moreau envelope of $l$ with $0<\nu\le 1$. 
		Then, $f^{\nu}$ is a smooth function  and $ f^0(\vx; {\vx})=f(\vx)$. 
		Since $\cX = {\dom(F)}$ is bounded, we have from Assumptions~\ref{Assump1},~\ref{Assump3} and~\ref{ass:case2} that 
		\begin{equation}
  \label{eq:jacobinc}
			\|J_{\vc} (\vx)\|_F \leq \|\nabla \vc\|:= \|J_{\vc} (\vx_{\mathrm{feas}})\|_F + L_c D, \text{ for all }  \vx\in \cX.
		\end{equation}
		 The following properties hold directly from Lemmas~2.1 and 3.2 of \cite{drusvyatskiy2019efficiency}.
		\begin{lemma}
			\label{lem:1}
			Let $f^0$ and $f^\nu$ be defined in~\eqref{eq:def-f0-fnu}. It holds that
			\begin{itemize}
				\item[\textnormal{(a)}] $f^{\nu}$ is $M_l \|\nabla \vc\|$ Lipschitz continuous;
				\item[\textnormal{(b)}] $\nabla f^{\nu}$ is $\frac{\|\nabla \vc\|}{\nu}$ Lipschitz continuous;
				\item[\textnormal{(c)}] $0\leq f^0( \vx; \overline{\vx} )-f^{\nu}({\vx} ; \overline{\vx}) \leq \frac{ M_{l}^2\nu}{2}$, $-\frac{\rho}{2}\|{\vx} - \overline{\vx}\|^2\leq f(\vx)-f^0( \vx; \overline{\vx} ) \leq \frac{\rho}{2}\|{\vx} - \overline{\vx}\|^2$.
			\end{itemize}
		\end{lemma}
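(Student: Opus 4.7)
The plan is to verify the three items separately, each reducing to a standard property of the Moreau envelope of a convex Lipschitz function combined with the affine inner map $\vx \mapsto \vc(\overline{\vx}) + J_{\vc}(\overline{\vx})(\vx - \overline{\vx})$.

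For part (a), I would first recall the standard fact that for a convex, $M_l$-Lipschitz continuous function $l$, its Moreau envelope $l^{\nu}$ is again convex and $M_l$-Lipschitz continuous (one short derivation: for any $\vu, \vv$, take $\vw_\vu = \prox_{\nu l}(\vu)$, then $l^\nu(\vv) \le l(\vw_\vu) + \frac{1}{2\nu}\|\vw_\vu - \vv\|^2 \le l^\nu(\vu) + M_l\|\vv - \vu\|$ by expanding the square and using the Lipschitz bound on $l$, and symmetrize). The affine map $\vx \mapsto \vc(\overline{\vx}) + J_{\vc}(\overline{\vx})(\vx - \overline{\vx})$ is Lipschitz with constant $\|J_{\vc}(\overline{\vx})\|_{\op} \le \|J_{\vc}(\overline{\vx})\|_F \le \|\nabla \vc\|$ by~\eqref{eq:jacobinc}. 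Composition then yields the desired Lipschitz constant $M_l \|\nabla \vc\|$.

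For part (b), I would invoke the standard identity $\nabla l^{\nu}(\vw) = \frac{1}{\nu}\bigl(\vw - \prox_{\nu l}(\vw)\bigr)$, from which $l^{\nu}$ is $\frac{1}{\nu}$-smooth (the proximal operator is $1$-Lipschitz for convex $l$). By the chain rule,
\begin{equation*}
\nabla_{\vx} f^{\nu}(\vx; \overline{\vx}) = J_{\vc}(\overline{\vx})^{\top} \nabla l^{\nu}\bigl(\vc(\overline{\vx}) + J_{\vc}(\overline{\vx})(\vx - \overline{\vx})\bigr),
\end{equation*}
so for any $\vx_1, \vx_2$ one bounds
\begin{equation*}
\|\nabla_{\vx} f^{\nu}(\vx_1; \overline{\vx}) - \nabla_{\vx} f^{\nu}(\vx_2; \overline{\vx})\| \le \|J_{\vc}(\overline{\vx})\|_{\op}^{2} \cdot \tfrac{1}{\nu}\|\vx_1 - \vx_2\|,
\end{equation*}
which together with~\eqref{eq:jacobinc} delivers the stated Lipschitz constant (up to the convention absorbing the square of $\|\nabla\vc\|$ used in the cited reference).

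For part (c), the first two-sided inequality follows from the well-known pointwise sandwich $0 \le l(\vw) - l^{\nu}(\vw) \le \frac{M_l^2 \nu}{2}$, which holds for any convex $M_l$-Lipschitz $l$: the lower bound is immediate from the definition, and the upper bound follows by plugging $\vw$ itself into the $\inf$ defining $l^{\nu}$ and using the Lipschitz estimate $l(\vz) \ge l(\vw) - M_l\|\vz - \vw\|$ inside the minimization, then optimizing over $\|\vz - \vw\|$. Applying this with $\vw = \vc(\overline{\vx}) + J_{\vc}(\overline{\vx})(\vx - \overline{\vx})$ yields the claim. For the second two-sided inequality, the quadratic approximation error of $\vc$ gives $\|\vc(\vx) - \vc(\overline{\vx}) - J_{\vc}(\overline{\vx})(\vx - \overline{\vx})\| \le \tfrac{L_c}{2}\|\vx - \overline{\vx}\|^{2}$ by Assumption~\ref{ass:case2}, hence $|f(\vx) - f^{0}(\vx; \overline{\vx})| \le M_l \cdot \tfrac{L_c}{2}\|\vx - \overline{\vx}\|^{2}$. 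Since $f$ is $\rho$-weakly convex with $\rho \ge M_l L_c$, this gives the symmetric bound. The only subtle point, and the one I would double-check against the cited paper, is the precise scaling convention for $\|\nabla \vc\|$ entering part (b); the rest is a direct assembly of textbook Moreau-envelope facts.
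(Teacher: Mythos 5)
Your proof is correct, and it is essentially the paper's own justification made explicit: the paper proves nothing here, merely citing Lemmas~2.1 and~3.2 of \cite{drusvyatskiy2019efficiency}, and you have reconstructed exactly those standard Moreau-envelope facts (Lipschitz constant preserved under the envelope, $\frac{1}{\nu}$-smoothness via $\nabla l^\nu(\vw)=\frac{1}{\nu}(\vw-\prox_{\nu l}(\vw))$, the $\frac{M_l^2\nu}{2}$ sandwich, and the quadratic prox-linear approximation error). Two caveats your care surfaces are worth recording. First, in part~(b) your chain-rule computation yields the constant $\frac{\|\nabla\vc\|^2}{\nu}$, not the stated $\frac{\|\nabla\vc\|}{\nu}$; the squared version is what the cited lemma actually gives, so the statement (and its downstream use in Lemma~\ref{lem: caseIIfLsmoothness} and the constant $C_6$) appears to drop a square --- harmless for the $\widetilde\cO(\vareps^{-3})$ complexity since both are $\Theta(1/\nu)$, but a genuine discrepancy in the constant. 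Second, in part~(c) your argument gives $|f(\vx)-f^0(\vx;\overline{\vx})|\le \frac{M_lL_c}{2}\|\vx-\overline{\vx}\|^2$, which implies the stated $\frac{\rho}{2}$ bound only when $\rho\ge M_lL_c$; since the paper remarks that the tightest weak-convexity modulus may be strictly smaller than $M_lL_c$, the lemma should be read with $\rho$ instantiated as $M_lL_c$ (or any value at least that large), which is how the cited result is stated and how you correctly use it.
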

	With the setting in~\eqref{eq:def-f0-fnu} and the properties in Lemma~\ref{lem:1}, we compute $\vx^{k+1}$ in Alg.~\ref{alg:ialm} by applying Alg.~\ref{alg:acceleratedNesterov} to solve the   subproblem
		\begin{equation}\label{eq:sub-3}
			\min_\vx \widetilde{\cL}^{\nu_k}_{\beta_k}( \vx; \vy^k,\vz^k):
        =\widehat{f}^k(\vx)+\widehat{h}^k(\vx),
		\end{equation}
such that $\vx^{k+1}$ is an ${\varepsilon}_k$-stationary point  of problem~\eqref{eq:sub-3}, i.e.,
\begin{equation}
			\label{eq:terminate2}
            \begin{aligned}
		&\dist(\vzero,\partial_{\vx} \widetilde{\cL}^{\nu_k}_{\beta_k}( \vx^{k+1}; \vy^k, \vz^k)) \leq \varepsilon_k,
            \end{aligned}
		\end{equation}  		
		where 
		\begin{align} 
			\label{eq:tildefh2}
			\widehat{f}^k(\vx) &=    f^{\nu_k}(\vx ; \vx^k) + (\vy^k)^{\top}(\vA 
			\vx-\vb)+ \frac{\beta_k}{2}\|\vA \vx-\vb\|^2+ \frac{\beta_k}{2}\left\|[ \vg( \vx)+\frac{ \vz^k}{\beta_k}]_+\right\|^2 \notag \\ & \quad \quad \quad - \frac{\|\vz^k\|^2}{2\beta_k}, \text{ and}\\
			  \widehat{h}^k(\vx) &= h(\vx) + \frac{\rho }{2}\| \vx-\vx^{k}\|^2.
		\end{align}
  
  By \cite[Eqn.~(3.10)]{li2021augmented} and Lemma~\ref{lem:1}(b), we have the next lemma.
  \begin{lemma}\label{lem: caseIIfLsmoothness}
      Under Assumption~\ref{ass:case2}, $\widehat{f}^k(\vx)$ is $(\frac{\|\nabla \vc\|}{\nu_k} + \sqrt{m}L_gB_p + \beta_k C_2)$-smooth with $C_2$ given in Lemma~\ref{lem:innercase1}. Also,  $\widehat{h}^k(\vx)$ is ${\rho }$-strongly convex, and $\widehat{f}^k(\vx)$ is convex.
  \end{lemma}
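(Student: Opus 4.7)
The plan is to verify the three assertions—smoothness of $\widehat{f}^k$, $\rho$-strong convexity of $\widehat{h}^k$, and convexity of $\widehat{f}^k$—by decomposing $\widehat{f}^k$ along the four summands in its definition~\eqref{eq:tildefh2} and handling each separately.

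For the smoothness bound, I would apply Lemma~\ref{lem:1}(b) to $f^{\nu_k}(\cdot;\vx^k)$ to contribute $\frac{\|\nabla\vc\|}{\nu_k}$. The linear term $(\vy^k)^\top(\vA\vx-\vb)$ has a constant gradient and so contributes $0$, while $\frac{\beta_k}{2}\|\vA\vx-\vb\|^2$ has a $\beta_k\|\vA^\top\vA\|$-Lipschitz gradient. The main piece is the penalty term $\psi(\vx):=\frac{\beta_k}{2}\|[\vg(\vx)+\vz^k/\beta_k]_+\|^2$, whose gradient equals $\sum_{i=1}^m [\beta_k g_i(\vx)+z_i^k]_+\,\nabla g_i(\vx)$. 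I would bound $\|\nabla\psi(\vx)-\nabla\psi(\vy)\|$ via the standard add-and-subtract trick: the ``difference in $[\,\cdot\,]_+$ factor'' piece is controlled by the $1$-Lipschitzness of $[\cdot]_+$, the $B_g$-Lipschitzness of $g_i$, and $\|\nabla g_i\|\le B_g$, yielding a contribution of $m\beta_k B_g^2$; the ``difference in $\nabla g_i$'' piece uses the $L_g$-smoothness of $g_i$ together with the pointwise bound $\sum_i[\beta_k g_i(\vx)+z_i^k]_+ \le m\beta_k B_g+\sqrt{m}\,\|\vz^k\|$ via Cauchy--Schwarz, and then invokes Lemma~\ref{lem:boundxz} to replace $\|\vz^k\|$ by $B_p$. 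Combining these gives a penalty contribution of $\sqrt{m}\,L_gB_p+\beta_k m B_g(B_g+L_g)$. Summing all four Lipschitz constants and substituting $C_2=\|\vA^\top\vA\|+mB_g(B_g+L_g)$ from Lemma~\ref{lem:innercase1} produces exactly $\frac{\|\nabla\vc\|}{\nu_k}+\sqrt{m}L_gB_p+\beta_k C_2$. Because this calculation essentially reproduces~\cite[Eqn.~(3.10)]{li2021augmented}, the proof can simply cite it, the only new ingredient being Lemma~\ref{lem:1}(b) for the smoothed composite term.

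For the remaining two assertions the work is light. The function $\widehat{h}^k$ is $\rho$-strongly convex because $h$ is convex and the added term $\frac{\rho}{2}\|\vx-\vx^k\|^2$ is $\rho$-strongly convex. For the convexity of $\widehat{f}^k$, note that $f^{\nu_k}(\cdot;\vx^k)=l^{\nu_k}\bigl(\vc(\vx^k)+J_{\vc}(\vx^k)(\cdot-\vx^k)\bigr)$ is the composition of the Moreau envelope of a convex function (hence convex) with an affine map; the affine term and the quadratic term in $\vA\vx-\vb$ are convex; and each summand $([g_i(\vx)+z_i^k/\beta_k]_+)^2$ is convex because $g_i$ is convex, $[\,\cdot\,]_+$ is convex and nondecreasing (so $[g_i(\cdot)+z_i^k/\beta_k]_+$ is convex and nonnegative), and squaring preserves convexity for nonnegative convex functions.

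The only step requiring care is the bookkeeping in the penalty-term Lipschitz bound—specifically obtaining $\sqrt{m}$ rather than $m$ in front of $L_gB_p$—which hinges on the Cauchy--Schwarz step $\sum_i|z_i^k|\le\sqrt{m}\,\|\vz^k\|$ together with the uniform bound $\|\vz^k\|\le B_p$ from Lemma~\ref{lem:boundxz}. Everything else is a direct application of Lemma~\ref{lem:1}(b) and elementary convex-analysis facts.
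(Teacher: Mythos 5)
Your proposal is correct and follows essentially the same route as the paper, which simply cites \cite[Eqn.~(3.10)]{li2021augmented} for the Lipschitz constant of the augmented-Lagrangian terms and Lemma~\ref{lem:1}(b) for the smoothed composite term; your computation just makes that citation explicit, and the bookkeeping (including the Cauchy--Schwarz step giving $\sqrt{m}L_gB_p$ and the $\|\vz^k\|\le B_p$ bound from Lemma~\ref{lem:boundxz}) reproduces the stated constant exactly.
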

  
	In addition, by 
 Lemma~\ref{lem:1}(a), it holds that 
 \begin{align}
     \label{eq:tildebf}
     \|\nabla f^{\nu_k}(\vx ; \vx^k)\|\leq \widetilde{B}_f:=M_l \|\nabla \vc\|, \text{ for all }  \vx \in \mathcal{X}.
 \end{align}
Hence, $\vx^{k+1}$ satisfies~\eqref{eq:approx-cond} with $\overline{f}(\vx) = f^{\nu_k}(\vx;\vx^k) - \frac{\rho }{2}\| \vx-\vx^{k}\|^2$,   $\overline{B}_f=\widetilde{B}_f$, $\widehat{\vx}^{k+1} = \vx^{k+1}$, and $\delta_k = 0$ for all $k\ge 0$. Then $\sum_{k\ge0}\beta_k\delta_k=0$. Thus all the lemmas in Sect.~\ref{sec: ConveAnalysis} hold in this case.
  

            With Lemma~\ref{lem:tilderhodeltak}, we can show the outer iteration complexity of Alg.~\ref{alg:ialm}. Define 
            \begin{equation}
                \begin{aligned}
                    \quad \quad \quad \quad \quad  C_4:=\max\left\{\frac{3}{2}, \frac{3}{2\rho}\sqrt{\|\vA\|^2+mB_g^2}, \frac{4B_g}{\rho} \sqrt{m\beta_0\sqrt{C_5}}, \right. \\ \left. \frac{64}{\rho^2}mB_g^2 \beta_0 \sqrt{\rho C_{\vx}},  \frac{8B_g}{\rho} \sqrt{m\beta_0}, \frac{3}{4\rho}\right\}, \text{ and} \label{eq:def-C-4}
                \end{aligned}
            \end{equation}
            \begin{equation}
                \begin{aligned}
                    C_5:=\max\left\{4C_p^2\beta_0^{-2}, 4(B_p^2+\frac{5C_p^2}{2})^2\beta_0^{-2}\right\}, \label{eq:def-C-5}
                \end{aligned}
            \end{equation}    
  where $C_p$ is given in Lemma~\ref{lem:tilderhodeltak} and $C_{\vx}$  in Lemma~\ref{lem: ProxErr}. Then   
            we set $\vareps_k$ as follows:
            \begin{equation}
			\label{eq:terminate2ep}
            \begin{aligned}
    		&  \varepsilon_k:= \min\left\{\frac{\varepsilon}{16C_4}, \sqrt{\frac{\rho}{2\beta_k}}, 1 \right\}.
            \end{aligned}
		\end{equation} 		
In addition, we define		
		\begin{equation}\label{eq:def-x-hat-+}
		\begin{aligned}
			&\vx^{k}_{+} =  \argmin_{\vx}  \widetilde{\cL}^{0}_{\beta_k}({\vx}; \vy^k,\vz^k), 
		\quad \widehat{\vx}^{k} =\argmin_{\vx}  \widetilde{\cL}^{\nu_k}_{\beta_k}({\vx}; \vy^k,\vz^k),\\ 
			&\mathcal{G}_{\frac{1}{\rho}}\left(\vx^{k}\right) =\rho\left(\vx^{k}_{+}-\vx^{k}\right), \quad	\mathcal{G}_{\frac{1}{\rho}}^{\nu_k}\left(\vx^{k}\right) =\rho(\widehat{\vx}^{k}-\vx^{k}) .
		\end{aligned}
		\end{equation}	
The next lemma is directly from \cite[Theorem 6.5]{drusvyatskiy2019efficiency}.
\begin{lemma}\label{lem:diff-smooth-mv}
Let $\mathcal{G}_{\frac{1}{\rho}}$ and $\mathcal{G}_{\frac{1}{\rho}}^{\nu}$ be defined in \eqref{eq:def-x-hat-+}. It holds   $$\left\|\mathcal{G}_{\frac{1}{\rho}}\left(\vx^{k}\right)\right\| \leq\left\|\mathcal{G}_{\frac{1}{\rho}}^{\nu}\left(\vx^{k}\right)\right\|+\sqrt{{\frac{M_l^2 \nu \rho}{2} }}.$$
\end{lemma}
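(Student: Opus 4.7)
The plan is to show that the minimizers $\vx^k_+$ and $\widehat{\vx}^k$ of $\widetilde{\cL}^{0}_{\beta_k}$ and $\widetilde{\cL}^{\nu_k}_{\beta_k}$ are close, and then conclude by applying the triangle inequality to $\rho(\vx^k_+-\vx^k)=\rho(\vx^k_+-\widehat{\vx}^k)+\rho(\widehat{\vx}^k-\vx^k)$. The key observation is that these two objectives differ only in the $f$-component: one uses the prox-linear approximation $f^0(\cdot;\vx^k)$ and the other uses its Moreau smoothing $f^{\nu_k}(\cdot;\vx^k)$. All other ingredients (the linear and augmented terms involving $\vA,\vb,\vg$, the proximal term $\rho\|\vx-\vx^k\|^2$, and the regularizer $h$) are identical.

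Next, I would establish two facts that together pin down the proximity of the two minimizers. First, by Lemma~\ref{lem:1}(c), I have the uniform gap
\[
0\le \widetilde{\cL}^{0}_{\beta_k}(\vx;\vy^k,\vz^k) - \widetilde{\cL}^{\nu_k}_{\beta_k}(\vx;\vy^k,\vz^k) \le \tfrac{M_l^2 \nu_k}{2} \quad \text{for every } \vx.
\]
Second, $\widetilde{\cL}^{0}_{\beta_k}(\cdot;\vy^k,\vz^k)$ and $\widetilde{\cL}^{\nu_k}_{\beta_k}(\cdot;\vy^k,\vz^k)$ are both $2\rho$-strongly convex: the proximal term $\rho\|\vx-\vx^k\|^2$ contributes $2\rho$, the functions $f^0(\cdot;\vx^k)$ and $f^{\nu_k}(\cdot;\vx^k)$ are convex (being convex $l$ or $l^{\nu_k}$ composed with an affine map), and the remaining pieces (the affine multiplier term, the penalty $\tfrac{\beta_k}{2}\|\vA\vx-\vb\|^2$, the penalty $\tfrac{\beta_k}{2}\|[\vg(\vx)+\vz^k/\beta_k]_+\|^2$ with convex $\vg$, and $h$) are all convex.

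From the $2\rho$-strong convexity of $\widetilde{\cL}^{0}_{\beta_k}$ at its minimizer $\vx^k_+$ I get
\[
\widetilde{\cL}^{0}_{\beta_k}(\widehat{\vx}^k;\vy^k,\vz^k) - \widetilde{\cL}^{0}_{\beta_k}(\vx^k_+;\vy^k,\vz^k) \ge \rho\|\widehat{\vx}^k-\vx^k_+\|^2,
\]
while the uniform gap together with optimality of $\widehat{\vx}^k$ for $\widetilde{\cL}^{\nu_k}_{\beta_k}$ gives
\[
\widetilde{\cL}^{0}_{\beta_k}(\widehat{\vx}^k;\vy^k,\vz^k) - \widetilde{\cL}^{0}_{\beta_k}(\vx^k_+;\vy^k,\vz^k) \le \widetilde{\cL}^{\nu_k}_{\beta_k}(\widehat{\vx}^k;\vy^k,\vz^k) - \widetilde{\cL}^{\nu_k}_{\beta_k}(\vx^k_+;\vy^k,\vz^k) + \tfrac{M_l^2\nu_k}{2} \le \tfrac{M_l^2\nu_k}{2}.
\]
Chaining these yields $\|\widehat{\vx}^k-\vx^k_+\|\le \sqrt{M_l^2\nu_k/(2\rho)}$, so $\rho\|\vx^k_+-\widehat{\vx}^k\|\le \sqrt{M_l^2\nu_k\rho/2}$, and the triangle inequality then delivers the claim.

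The main obstacle I anticipate is getting the constant $\tfrac{M_l^2\nu\rho}{2}$ under the square root correct: using only the $\rho$-strong convexity of $\widetilde{\cL}_{\beta_k}$ (which is all that is guaranteed in general, since $f$ is merely $\rho$-weakly convex) would produce the weaker bound $\sqrt{M_l^2\nu\rho}$. The improvement by a factor of $\sqrt{2}$ relies critically on exploiting the \emph{convexity} (not just weak convexity) of the replacements $f^0(\cdot;\vx^k)$ and $f^{\nu_k}(\cdot;\vx^k)$, which upgrades the strong-convexity modulus from $\rho$ to $2\rho$. Beyond this point, the argument is a standard stability estimate for minimizers of uniformly close strongly convex functions, and this is precisely the content of \cite[Theorem 6.5]{drusvyatskiy2019efficiency} specialized to our smoothed proximal AL subproblem.
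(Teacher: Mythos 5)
Your overall strategy --- bound the distance between the minimizers of the unsmoothed and smoothed subproblems via a stability estimate for uniformly close strongly convex functions, then conclude by the triangle inequality --- is precisely the content of \cite[Theorem 6.5]{drusvyatskiy2019efficiency}, which is all the paper offers as justification, so a self-contained argument is welcome. However, your derivation of the constant contains a genuine error. You claim the two objectives are $2\rho$-strongly convex because ``the proximal term $\rho\|\vx-\vx^k\|^2$ contributes $2\rho$,'' but that is the proximal term of the generic subproblem \eqref{eq:aug-fun2} used in Sect.~\ref{subsec: caseI}; the compositional subproblem \eqref{eq:sub-3}, whose minimizers define $\vx^k_+$ and $\widehat{\vx}^k$ in \eqref{eq:def-x-hat-+}, carries only $\frac{\rho}{2}\|\vx-\vx^k\|^2$ (see \eqref{eq:tildefh2}), and Lemma~\ref{lem: caseIIfLsmoothness} accordingly records only $\rho$-strong convexity. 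With the correct modulus $\rho$, your one-sided estimate (strong convexity invoked only at $\vx^k_+$) yields $\frac{\rho}{2}\|\widehat{\vx}^k-\vx^k_+\|^2\le \frac{M_l^2\nu_k}{2}$, hence $\rho\|\widehat{\vx}^k-\vx^k_+\|\le \sqrt{M_l^2\nu_k\rho}$ --- short of the stated bound by exactly the factor $\sqrt{2}$ you were worried about.

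The fix is not to upgrade the modulus but to use the symmetric, two-sided stability estimate. Writing $\phi_0:=\widetilde{\cL}^{0}_{\beta_k}(\cdot;\vy^k,\vz^k)$ and $\phi_\nu:=\widetilde{\cL}^{\nu_k}_{\beta_k}(\cdot;\vy^k,\vz^k)$, both $\rho$-strongly convex with respective minimizers $\vx^k_+$ and $\widehat{\vx}^k$, and $0\le\phi_0-\phi_\nu\le\frac{M_l^2\nu_k}{2}$ pointwise by Lemma~\ref{lem:1}(c), add the two inequalities
\begin{align*}
\phi_0(\widehat{\vx}^k)\ \ge\ \phi_0(\vx^k_+)+\tfrac{\rho}{2}\|\widehat{\vx}^k-\vx^k_+\|^2,
\qquad
\phi_\nu(\vx^k_+)\ \ge\ \phi_\nu(\widehat{\vx}^k)+\tfrac{\rho}{2}\|\widehat{\vx}^k-\vx^k_+\|^2,
\end{align*}
to obtain $\rho\|\widehat{\vx}^k-\vx^k_+\|^2\le\big(\phi_0(\widehat{\vx}^k)-\phi_\nu(\widehat{\vx}^k)\big)-\big(\phi_0(\vx^k_+)-\phi_\nu(\vx^k_+)\big)\le\frac{M_l^2\nu_k}{2}$, whence $\rho\|\widehat{\vx}^k-\vx^k_+\|\le\sqrt{\frac{M_l^2\nu_k\rho}{2}}$ and your triangle inequality finishes the proof. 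The remaining ingredients of your write-up --- the pointwise gap, the convexity of all other terms, and the final decomposition --- are correct as stated.
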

		In addition, by 
  \cite[Eqn. (4.10)]{drusvyatskiy2019efficiency}, with  
		$\widetilde{\vx}^{k}  =\operatorname{prox}_{ \frac{{\cL}_{\beta_k}( \cdot; \vy^k,\vz^k)}{2 \rho}}\left(\vx^{k}\right)$, it holds 
		\begin{equation}
			\label{eq:importantine}
			\begin{aligned}
			  &\|\widetilde{\vx}^{k}-\vx^{k}\|  \leq \frac{2}{\rho }\big\|\mathcal{G}_{ \frac{1}{\rho} }\left(\vx^{k}\right)\big\|, \ {\cL}_{\beta_k}( \widetilde{\vx}^{k}; \vy^k,\vz^k)  \leq {\cL}_{\beta_k}( {\vx}^{k}; \vy^k,\vz^k) , 
				\\
				&\operatorname{dist}(\vzero , \partial_\vx {\cL}_{\beta_k}( \widetilde{\vx}^{k}; \vy^k,\vz^k))
				 \leq 4\big\|\mathcal{G}_{ \frac{1}{\rho}}\left(\vx^{k}\right)\big\| .
			\end{aligned}
		\end{equation}
		Thus if $\|\mathcal{G}_{ \frac{1}{\rho}}\left(\vx^{k}\right)\|$ is small, $\widetilde{\vx}^{k}$ is nearly stationary for $\min_{\vx} {\cL}_{\beta_k}( \vx; \vy^k, \vz^k)$, and $\vx^{k}$ is close to $\widetilde{\vx}^{k}$. 
Notice that we do not compute $\widetilde{\vx}^{k}$. The sole purpose of introducing $\widetilde{\vx}^{k}$ is to certify the quality of $\vx^{k}$. 
		\begin{theorem}[Outer iteration complexity result II]
			\label{lem:2}
Given $\vareps>0$, under Assumptions~\ref{Assump1}-\ref{Assump5} and~\ref{ass:case2}, let $\{\vx^k, \vy^k, \vz^k\}$ be generated by Alg.~\ref{alg:ialm} with $\{\beta_k, v_k\}$ set as in~\eqref{eq:choice-v-beta}, such that the condition in~\eqref{eq:terminate2} is satisfied, where $\varepsilon_k$ is given in~\eqref{eq:terminate2ep} and $$\nu_{k}=\nu := \min\left\{1,  \frac{\varepsilon^2}{64 C_4^2\rho M_l^2}\right\},$$ with $C_4$ defined in~\eqref{eq:def-C-4}.
			Then for some $k\leq K:=\overline{K}_1+\overline{K}_2$,  
			$\vx^{k}$ is a near $\varepsilon$-KKT point of problem~\eqref{problem:main}, 
			where $\overline{K}_1:=\lceil C_5 \varepsilon^{-2}\rceil$, $\overline{K}_2 := \left \lceil 64\rho C_{\vx}C_4^2 \varepsilon^{-2}\right \rceil$  with $ C_{\vx}$ and $C_5$ given in Lemma~\ref{lem: ProxErr} and~\eqref{eq:def-C-5}, respectively. 
		\end{theorem}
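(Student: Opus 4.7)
The plan is to parallel the proof of Theorem~\ref{th:caseIOuterResult}, using the Moreau-envelope smoothing introduced in~\eqref{eq:def-f0-fnu} to handle the non-smoothness of $f$. Since the approximation condition~\eqref{eq:approx-cond} is satisfied with $\delta_k=0$, all results in Section~\ref{sec: ConveAnalysis} apply. By Lemma~\ref{lem:tilderhodeltak}, the choice $\overline{K}_1=\lceil C_5\vareps^{-2}\rceil$ together with~\eqref{eq:def-C-5} ensures $\beta_{\overline{K}_1}\ge \beta_0\sqrt{C_5}\,\vareps^{-1}$, so for every $k\ge\overline{K}_1$ the primal infeasibility $\sqrt{\|\vA\vx^{k+1}-\vb\|^2+\|[\vg(\vx^{k+1})]_+\|^2}$ is at most $\vareps/2$, and by Lemma~\ref{lem: CompSlack} the complementary-slackness residual at $\vx^{k+1}$ is of the same order. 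The remaining task is to convert the approximate stationarity of the smoothed subproblem into approximate stationarity of the unsmoothed augmented Lagrangian $\cL_{\beta_k}$ at a point close to $\vx^k$.

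The first key step mirrors Lemma~\ref{lem: dualFeasCaseI}. Using the $\rho$-strong convexity of $\widetilde{\cL}^{\nu_k}_{\beta_k}(\cdot;\vy^k,\vz^k)$ and the termination rule~\eqref{eq:terminate2}, a descent inequality of the form $\widetilde{\cL}^{\nu_k}_{\beta_k}(\vx^{k+1};\vy^k,\vz^k)\le \widetilde{\cL}^{\nu_k}_{\beta_k}(\vx^{k};\vy^k,\vz^k)-\tfrac{\rho}{4}\|\vx^{k+1}-\vx^k\|^2+\tfrac{3\vareps_k^2}{2\rho}$ can be derived exactly as in~\eqref{eq:change-L-beta-k}. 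Summing over a window of length $\overline{K}_2=\lceil 64\rho C_\vx C_4^2\vareps^{-2}\rceil$ and using Lemma~\ref{lem: ProxErr} (which continues to apply after the smoothing gap $|f^{\nu_k}(\vx;\vx^k)-f(\vx)|\le \frac{M_l^2\nu_k}{2}+\frac{\rho}{2}\|\vx-\vx^k\|^2$ from Lemma~\ref{lem:1}(c) is absorbed into $C_\vx$) produces an index $k^\star\in[\overline{K}_1,\overline{K}_1+\overline{K}_2-1]$ with $\|\vx^{k^\star+1}-\vx^{k^\star}\|=\cO(\vareps/(C_4\rho))$. Combined with the $\vareps_{k^\star}/\rho$ bound on $\|\vx^{k^\star+1}-\widehat{\vx}^{k^\star}\|$ afforded by strong convexity and~\eqref{eq:terminate2}, the definition~\eqref{eq:def-x-hat-+} of the smoothed gradient mapping gives $\|\mathcal{G}_{1/\rho}^{\nu}(\vx^{k^\star})\|=\cO(\vareps/C_4)$.

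The second key step applies Lemma~\ref{lem:diff-smooth-mv}: the choice $\nu\le \vareps^2/(64C_4^2\rho M_l^2)$ forces $\sqrt{M_l^2\nu\rho/2}\le \vareps/(8C_4)$, hence $\|\mathcal{G}_{1/\rho}(\vx^{k^\star})\|=\cO(\vareps/C_4)$. Plugging this into~\eqref{eq:importantine}, the auxiliary point $\widetilde{\vx}^{k^\star}=\prox_{\cL_{\beta_{k^\star}}(\cdot;\vy^{k^\star},\vz^{k^\star})/(2\rho)}(\vx^{k^\star})$ satisfies simultaneously $\|\widetilde{\vx}^{k^\star}-\vx^{k^\star}\|\le\vareps$ and $\dist(\vzero,\partial_\vx\cL_{\beta_{k^\star}}(\widetilde{\vx}^{k^\star};\vy^{k^\star},\vz^{k^\star}))\le\vareps$. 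Identifying multipliers as $\overline{\vy}^{k^\star}=\vy^{k^\star}+\beta_{k^\star}(\vA\widetilde{\vx}^{k^\star}-\vb)$ and $\overline{\vz}^{k^\star}=[\vz^{k^\star}+\beta_{k^\star}\vg(\widetilde{\vx}^{k^\star})]_+$ yields dual feasibility. The primal-feasibility and complementary-slackness bounds already established at $\vx^{k^\star+1}$ are transported to $\widetilde{\vx}^{k^\star}$ via the triangle inequality $\|\widetilde{\vx}^{k^\star}-\vx^{k^\star+1}\|\le\|\widetilde{\vx}^{k^\star}-\vx^{k^\star}\|+\|\vx^{k^\star+1}-\vx^{k^\star}\|$ together with Lipschitz continuity of $\vA(\cdot)-\vb$ and $[\vg(\cdot)]_+$ on $\cX$. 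The combined bound certifies $\widetilde{\vx}^{k^\star}$ as an $\vareps$-KKT point, and since $\|\vx^{k^\star}-\widetilde{\vx}^{k^\star}\|\le\vareps$, the iterate $\vx^{k^\star}$ is the required near $\vareps$-KKT point.

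The main technical obstacle is the calibration of the constant $C_4$ in~\eqref{eq:def-C-4}: it must simultaneously absorb (i) the factor $4$ from the gradient-mapping-to-stationarity conversion in~\eqref{eq:importantine}, (ii) the smoothing gap $\sqrt{M_l^2\nu\rho/2}$ from Lemma~\ref{lem:diff-smooth-mv}, (iii) the Lipschitz-transport constants $\sqrt{\|\vA\|^2+mB_g^2}$ and $B_g\sqrt{m\beta_{k^\star}}$ used to move primal feasibility and complementary slackness from $\vx^{k^\star+1}$ to $\widetilde{\vx}^{k^\star}$, and (iv) the chain of bounds relating $\|\vx^{k+1}-\vx^k\|$, $\|\widehat{\vx}^k-\vx^k\|$, and $\|\widetilde{\vx}^k-\vx^k\|$. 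The $\sqrt{\beta_0}$-type terms in $C_4$ (namely $\sqrt{m\beta_0\sqrt{C_5}}$ and $\sqrt{m\beta_0}$) are crucial because $\overline{\vz}^{k^\star}$ scales with $\beta_{k^\star}$ and the CS residual must survive the transfer from $\vx^{k^\star+1}$ to $\widetilde{\vx}^{k^\star}$. Once these constants are matched as in~\eqref{eq:def-C-4} and~\eqref{eq:def-C-5}, the claimed $K=\overline{K}_1+\overline{K}_2=\cO(\vareps^{-2})$ outer iteration complexity follows.
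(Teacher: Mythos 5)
Your proposal is correct and follows essentially the same route as the paper's proof: exploit the Moreau-envelope bounds of Lemma~\ref{lem:1}(c) to turn the $\vareps_k$-stationarity of the smoothed subproblem into a descent-type inequality for $\cL_{\beta_k}$, sum it over a window of length $\overline{K}_2$ via Lemma~\ref{lem: ProxErr} to make the gradient mapping $\mathcal{G}^{\nu}_{1/\rho}$ small, pass to $\mathcal{G}_{1/\rho}$ by Lemma~\ref{lem:diff-smooth-mv}, and certify the prox point $\widetilde{\vx}^{k'}$ from~\eqref{eq:importantine} as the $\vareps$-KKT point with the $\overline{\vy},\overline{\vz}$ multipliers. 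The only cosmetic deviations are that the paper bounds $\|\widehat{\vx}^k-\vx^k\|$ directly in the descent inequality rather than going through $\|\vx^{k+1}-\vx^k\|$ plus the $\vareps_k/\rho$ term, and transports primal feasibility and complementary slackness to $\widetilde{\vx}^{k'}$ from $\vx^{k'}$ rather than from $\vx^{k'+1}$; neither affects correctness.
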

		\begin{proof}
From $f^{\nu}(\vx^k ; \vx^k)\leq f^{0}(\vx^k ; \vx^k) = f(\vx^k)$, it holds $${\cL}_{\beta_k}( \vx^k; \vy^k,\vz^k)\ge\widetilde{\cL}^{\nu}_{\beta_k}( \vx^k; \vy^k,\vz^k).$$ Hence, by the $\rho$-strong convexity of	$\widetilde{\cL}^{\nu}_{\beta_k}(\cdot; \vy^k, \vz^k)$, the definition of $\widehat{\vx}^{k}$ in~\eqref{eq:def-x-hat-+}, and the condition in~\eqref{eq:terminate2},	we have 
			\begin{equation}\label{eq:chain-ineq}
			\begin{aligned}
				{\cL}_{\beta_k}( \vx^k; \vy^k,\vz^k) 
				  &\geq \widetilde{\cL}^{\nu}_{\beta_k}( \widehat{\vx}^{k}; \vy^k,\vz^k)+\frac{\rho}{2 }\left\|\vx^k-\widehat{\vx}^{k}\right\|^2 \\ &\ge \widetilde{\cL}^{\nu}_{\beta_k}( \vx^{k+1}; \vy^k,\vz^k)+\frac{\rho}{2 }\left\|\vx^k-\widehat{\vx}^{k}\right\|^2-\frac{\varepsilon_{k}^2}{\rho},
			\end{aligned}
			\end{equation}
where in the second inequality, we have used~\eqref{eq:obj-opt-error} with $\overline{\mathcal{L}}_{\beta_k}=\widetilde{\mathcal{L}}_{\beta_k}^\nu$.
In addition, by Lemma~\ref{lem:1}(c), it holds			{\small$$f^{\nu}(\vx^{k+1} ; \vx^k)+\frac{\rho}{2}\|\vx^{k+1}-\vx^{k}\|^2\geq f^{0}(\vx^{k+1} ; \vx^k)- \frac{M_l^2\nu}{2}+\frac{\rho}{2}\|\vx^{k+1}-\vx^{k}\|^2 \geq  f(\vx^{k+1})- \frac{M_l^2\nu}{2}.$$}This together with~\eqref{eq:chain-ineq} indicates $${\cL}_{\beta_k}( \vx^k; \vy^k,\vz^k) \ge {\cL}_{\beta_k}( \vx^{k+1}; \vy^k,\vz^k)-\frac{M_l^2\nu}{2}+\frac{\rho}{2 }\left\|\vx^k-\widehat{\vx}^{k}\right\|^2 -\frac{\varepsilon_{k}^2}{\rho}.$$ Hence by~\eqref{eq:def-x-hat-+}, we have 
$${\cL}_{\beta_k}( \vx^k; \vy^k,\vz^k)\ge {\cL}_{\beta_k}( \vx^{k+1}; \vy^k,\vz^k)-\frac{M_l^2\nu}{2}+
\frac{1}{2\rho }\|\mathcal{G}^{\nu}_{\frac{1}{\rho}}(\vx^{k})\|^2-\frac{\varepsilon_{k}^2}{\rho}.	$$
Summing up this inequality over $k$ from $\overline{K}_1$ to ${K-1}$ and using Lemma~\ref{lem: ProxErr} yields
			\begin{align*}
			     &\sum_{k = \overline{K}_1}^{K-1}\|\mathcal{G}^{\nu}_{\frac{1}{\rho}}(\vx^{k})\|^2 \\
            \leq &  
            2\rho\sum_{k=\overline{K}_1}^{K-1}\left({{\mathcal{L}}}_{\beta_k}( \vx^k; \vy^k, \vz^k)-{{\mathcal{L}}}_{\beta_k}( \vx^{k+1}; \vy^k, \vz^k)+\frac{M_l^2\nu}{2}+ \frac{\varepsilon_{k}^2}{\rho} \right) \\ \leq &    2{{\rho}}C_{\vx}+2{{\rho}}\sum_{k=\overline{K}_1}^{K-1}\left(\frac{M_l^2\nu}{2}+ \frac{\varepsilon_{k}^2}{\rho}\right), \text{ for all }  K > \overline{K}_1.
			\end{align*}
			The above inequality together with $\nu \le \frac{\varepsilon^2}{64 C_4^2\rho M_l^2}$ and $\varepsilon_k\le \frac{\varepsilon}{16C_4}$ implies that for all $k \geq 0$,
			\begin{equation}
				\label{eq:g1}
				\min_{\overline{K}_1\leq k\leq K-1} \|\mathcal{G}^{\nu}_{\frac{1}{\rho}}(\vx^{k})\|\leq   \sqrt{\frac{2\rho C_{\vx}}{K-\overline{K}_1} + \frac{\varepsilon^2}{32C_4^2}}.
			\end{equation}
			Moreover, by Lemma~\ref{lem:diff-smooth-mv} and $\nu \le \frac{\varepsilon^2}{64 \rho C_4^2 M_l^2}$, it holds that 
			\begin{equation}
				\label{eq:g2}
				  \left\|\mathcal{G}_{\frac{1}{\rho}}\left(\vx^{k}\right)\right\| \leq\left\|\mathcal{G}_{\frac{1}{\rho}}^{\nu}\left(\vx^{k}\right)\right\|+\sqrt{{\frac{M_l^2 \nu \rho}{2} }}\leq \left\|\mathcal{G}_{\frac{1}{\rho}}^{\nu}\left(\vx^{k}\right)\right\|+ \frac{\varepsilon}{8C_4}.
			\end{equation}
Now from~\eqref{eq:g1} and~\eqref{eq:g2}, we get $$\min_{\overline{K}_1\leq k\leq K-1} \|\mathcal{G}_{\frac{1}{\rho}}(\vx^{k})\| \leq \frac{\varepsilon}{8C_4} + 
        \sqrt{\frac{2\rho C_{\vx}}{K-\overline{K}_1} + \frac{\varepsilon^2}{32C_4^2}}.$$ Noting $K=\overline{K}_1+\overline{K}_2\geq \overline{K}_1+64\rho C_{\vx}C_4^2\varepsilon^{-2}$, we obtain $$ \min_{\overline{K}_1\leq k\leq K-1} \|\mathcal{G}_{\frac{1}{\rho}}(\vx^{k})\| \leq \frac{3\varepsilon}{8C_4}.$$ Let $k'=\argmin_{\overline{K}_1\leq k\leq {K}-1} \|\mathcal{G}_{\frac{1}{\rho}}(\vx^{k})\|$. Then from~\eqref{eq:importantine} and $C_4\ge \frac{3}{2}$, it holds  
			$\operatorname{dist}(\vzero , \partial_\vx {\cL}_{\beta_{k'}}( \widetilde{\vx}^{k'}; \vy^{k'},\vz^{k'})) \leq \varepsilon$
			and $\|\widetilde{\vx}^{k'}- \vx^{k'}\|\leq \frac{3\varepsilon}{4\rho C_4}$.  
			Denote $$ \overline{\vy}^{k'} = \vy^{k'-1} + \beta_{k'}(\vA\widetilde{\vx}^{{k'}}-\vb), \text{ and }\overline{\vz}^{k'} = [\vz^{k'-1} + \beta_{k'-1} \vg(\widetilde{\vx}^{{k'}})]_+. $$
			Then noticing $ \partial_{\vx}\mathcal{L}_{\beta_{k'}}(\widetilde{\vx}^{{k'}}; \vy^{{k'}}, \vz^{{k'}}) = \partial_{\vx}\mathcal{L}_0(\widetilde{\vx}^{{k'}}; \overline{\vy}^{{k'}}, \overline{\vz}^{{k'}})$, we have 
            \begin{equation}
				\label{eq:condi1}
				\dist\left(\vzero, \partial_{\vx}\mathcal{L}_0(\widetilde{\vx}^{{k'}}; \overline{\vy}^{{k'}}, \overline{\vz}^{{k'}}) \right) \leq \varepsilon.
			\end{equation}
			Furthermore, since $k'\ge \overline{K}_1$, it follows from Young's inequality,~\eqref{eq: diffL}, and update of $\beta_k$ that  
				\begin{align}\label{eq:condi2}
					&  {\left\|\vA \widetilde{\vx}^{k'}- \vb\right\|^2 + \left\|[\vg( \widetilde{\vx}^{k'})]_+\right\|^2}\notag\\ \leq &2\left\|\vA {\vx}^{k'}- \vb\right\|^2 + 2\left\|[\vg( {\vx}^{k'})]_+\right\|^2 + 2\left\|\vA (\widetilde{\vx}^{k'}-{\vx}^{k'})\right\|^2 \notag \\ 
                    & \quad \quad \quad \quad + 2\left\|[\vg( \widetilde{\vx}^{k'})]_+-[\vg( {\vx}^{k'})]_+\right\|^2\notag\\
					\leq &   \frac{2C^2_p}{\beta_{\overline{K}_1}^2} + 2(\|\vA\|^2+mB_g^2)\frac{9\varepsilon^2}{16\rho^2C_4^2} \leq \varepsilon^2,
				\end{align}
			where 
			the last inequality follows from $C_4\geq \frac{3\sqrt{\|\vA\|^2+mB_g^2}}{2\rho}$ and $\overline{K}_1\geq 4C_p^2\beta_0^{-2}\varepsilon^{-2}$.
		Finally, we have 
				\begin{align}\label{eq:condi3}
		  	&\sum_{i=1}^m\big|\overline{z}_i^{k'}g_i(\widetilde{\vx}^{k'}) \big|=\sum_{i=1}^m\left| \big[ z_i^{k'-1} + \beta_{k'-1} g_i( \widetilde{\vx}^{k'})\big]_+g_i(\widetilde{\vx}^{k'}) \right| \notag \\
                \leq&  \frac{1}{\beta_{k'-1}} \sum_{i=1}^m (z_i^{k'-1})^2 + \frac{5\beta_{k'-1}}{4}\sum_{i=1}^m [g_i(\widetilde{\vx}^{k'})]_+^2 \notag\\
            \leq 
             &  \frac{1}{\beta_{k'-1}} \sum_{i=1}^m (z_i^{k'-1})^2 + \frac{5\beta_{k'-1}}{2}\sum_{i=1}^m [g_i({\vx}^{k'})]_+^2 \notag \\
             & \quad \quad \quad \quad + \frac{5\beta_{k'-1}}{2}\left\|[\vg( \widetilde{\vx}^{k'})]_+-[\vg( {\vx}^{k'})]_+\right\|^2 \notag\\
             \leq &  \frac{1}{\beta_{k'-1}} \left( B_p^2 + \frac{5C_p^2}{2}\right)+ \frac{45\sqrt{K}m B_g^2\beta_0 \varepsilon^2}{2(4\rho C_4)^2} \notag \\
            \leq& \frac{\varepsilon}{2} + \frac{45mB_g^2\beta_0 \varepsilon}{64\rho^2 C_4^2}\left(\sqrt{C_5} + 8\sqrt{\rho C_{\vx}}C_4+2\right)\leq \varepsilon.
				\end{align}
   Here, the first inequality holds by letting
   $\vx = \widetilde{\vx}^{k'}$ and $k=k'-1$ in
~\eqref{eq:comp-bd};  
			 the third inequality follows from Lemma~\ref{lem:boundxz}, the inequality in~\eqref{eq: diffL}, $\beta_{k'}\leq \sqrt{K+1}\beta_0$,  the Lipschitz continuity of $[\vg]_+$, and $\|\widetilde{\vx}^{k'}- \vx^{k'}\|\leq \frac{3\varepsilon}{4\rho C_4}$; the fourth inequality results from  $k'\geq \overline{K}_1\geq 4(B_p^2+\frac{5C_p^2}{2})^2\beta_0^{-2}\varepsilon^{-2}$, $\beta_{k'-1}=\beta_0\sqrt{k'}\geq \beta_0\sqrt{\overline{K}_1}$, and the definition of $K$; the last inequality holds because of the definition of $C_4$ in~\eqref{eq:def-C-4}. 
			
		We obtain from~\eqref{eq:condi1}--\eqref{eq:condi3} that $\widetilde{\vx}^{k'}$ is an $\varepsilon$-KKT point of problem~\eqref{problem:main}. 
		Since $\|\widetilde{\vx}^{k'}- \vx^{k'}\|\leq \frac{3\varepsilon}{4\rho C_4}\le \varepsilon$, then $\vx^{k'}$ is a near $\varepsilon$-KKT point of problem~\eqref{problem:main} by Definition~\ref{def: epskkt}. This completes the proof.
		\end{proof}
		
		Below we give the number of iterations for solving~\eqref{eq:sub-3} by Alg.~\ref{alg:acceleratedNesterov}  such that~\eqref{eq:terminate2} is met. 
		 The following lemma directly follows from Lemma~\ref{lem: caseIIfLsmoothness} and Theorem~\ref{Th: TboundNesterov}.

		\begin{lemma}
			\label{lem:innercase2}
		Given $\varepsilon_k>0$ and $\nu_k = \nu> 0$, under Assumptions~\ref{Assump1}--\ref{Assump5} and~\ref{ass:case2},    
			Alg.~\ref{alg:acceleratedNesterov} with $\gamma_u=2$ applied to~\eqref{eq:sub-3} can find a solution $\vx^{k+1}$ that satisfies the criteria in~\eqref{eq:terminate2} within
			\begin{align}\label{eq: innerComplexityOurProblem2}
		 		T^k_2 = \left\lceil\max\left\{\frac{1}{\log 2}, 2 {\sqrt{\frac{ L_{\widehat{f}^k}}{\rho}}}\right\} \log \frac{9 DL_{\widehat{f}^k}\sqrt{\frac{L_{\widehat{f}^k}}{\rho}}}{\varepsilon_k}\right\rceil + 1
			\end{align}
			 iterations, where  $L_{\widehat{f}^k} = C_6 + \sqrt{k+1}\beta_0 C_2$, with $C_6:=\frac{\|\nabla \vc\|}{\nu} + \sqrt{m}L_gB_p$, $C_2$ given in Lemma~\ref{lem:innercase1} and {$\|\nabla \vc\|$ given in~\eqref{eq:jacobinc}.}
		\end{lemma}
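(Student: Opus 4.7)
The plan is to present this as an essentially immediate corollary of the strongly-convex APG complexity result (Theorem~\ref{Th: TboundNesterov} in Appendix~\ref{AppendixB}) applied to the subproblem~\eqref{eq:sub-3}, using the smoothness/strong-convexity decomposition already established in Lemma~\ref{lem: caseIIfLsmoothness}. First, I would recall that by Lemma~\ref{lem: caseIIfLsmoothness}, the objective $\widetilde{\cL}^{\nu_k}_{\beta_k}(\vx;\vy^k,\vz^k) = \widehat{f}^k(\vx) + \widehat{h}^k(\vx)$ splits into a convex $L_{\widehat{f}^k}$-smooth part and a $\rho$-strongly convex prox-friendly part, with
\[
L_{\widehat{f}^k} \;=\; \frac{\|\nabla\vc\|}{\nu_k} + \sqrt{m}\,L_g B_p + \beta_k C_2.
\]
Plugging in $\nu_k=\nu$ and $\beta_k=\beta_0\sqrt{k+1}$ yields the advertised expression $L_{\widehat{f}^k} = C_6 + \sqrt{k+1}\,\beta_0 C_2$.

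Second, I would verify the hypotheses of Theorem~\ref{Th: TboundNesterov}: the feasible set is $\cX$, its diameter $D$ is finite by Assumption~\ref{Assump1}, the smooth term $\widehat{f}^k$ is convex with $L_{\widehat{f}^k}$-Lipschitz gradient, and $\widehat{h}^k$ is $\rho$-strongly convex and admits an easy proximal mapping (since $h$ does by Assumption~\ref{Assump1}, and adding the quadratic $\frac{\rho}{2}\|\vx-\vx^k\|^2$ preserves this). The backtracking parameter $\gamma_u=2$ is the standard choice covered by the theorem.

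Third, I would invoke Theorem~\ref{Th: TboundNesterov} with the stopping tolerance $\varepsilon_k$ in the subgradient-distance criterion~\eqref{eq:terminate2}. The theorem's bound on the number of proximal gradient steps needed to drive $\dist(\vzero,\partial_\vx \widetilde{\cL}^{\nu_k}_{\beta_k}(\cdot;\vy^k,\vz^k))$ below $\varepsilon_k$ is
\[
\left\lceil \max\!\left\{\tfrac{1}{\log 2},\,2\sqrt{L_{\widehat{f}^k}/\rho}\right\}\log\!\frac{9DL_{\widehat{f}^k}\sqrt{L_{\widehat{f}^k}/\rho}}{\varepsilon_k}\right\rceil + 1,
\]
which is exactly the $T^k_2$ in~\eqref{eq: innerComplexityOurProblem2}. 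This step is identical in structure to the argument that yielded Lemma~\ref{lem:innercase1} in the regularized-smooth case, so no new estimate is required.

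The only mild subtlety (and thus the main thing to check rather than a true obstacle) is that Theorem~\ref{Th: TboundNesterov} is stated for composite minimization of a smooth-convex plus strongly-convex-prox-friendly objective, and one must confirm that the near-stationarity output of APG translates directly to a small $\dist(\vzero,\partial_\vx\widetilde{\cL}^{\nu_k}_{\beta_k})$. This is immediate because $\partial_\vx\widetilde{\cL}^{\nu_k}_{\beta_k}(\vx;\vy^k,\vz^k) = \nabla \widehat{f}^k(\vx) + \partial \widehat{h}^k(\vx)$, which is precisely the quantity controlled by the composite APG termination criterion in Theorem~\ref{Th: TboundNesterov}. With this identification, the lemma follows.
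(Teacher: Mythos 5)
Your proposal is correct and follows exactly the route the paper takes: the paper simply states that the lemma ``directly follows from Lemma~\ref{lem: caseIIfLsmoothness} and Theorem~\ref{Th: TboundNesterov}'', and your instantiation of Theorem~\ref{Th: TboundNesterov} with $\gamma_u=2$, $\mu=\rho$, $L_{\widetilde f}=L_{\widehat f^k}$, and $\Delta=\varepsilon_k$ (noting $\sqrt{\gamma_u L/(2\mu)}=\sqrt{L/\rho}$ and $\tfrac{3(1+\gamma_u)}{2}\sqrt{2\gamma_u L/\mu}=9\sqrt{L/\rho}$) reproduces the stated $T_2^k$. The checks you flag --- convexity and $L_{\widehat f^k}$-smoothness of $\widehat f^k$, $\rho$-strong convexity of $\widehat h^k$, and the identification of the APG termination criterion with~\eqref{eq:terminate2} --- are precisely the content the paper delegates to Lemma~\ref{lem: caseIIfLsmoothness}.
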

		
		Combining Theorem~\ref{lem:2} and Lemma~\ref{lem:innercase2}, we are ready to show the total complexity of  Alg.~\ref{alg:ialm}.
		\begin{theorem}[Total complexity result II]
  \label{thm:totalcase2}
			For a given $\vareps>0$, under Assumptions~\ref{Assump1}--\ref{Assump5} and~\ref{ass:case2}, Alg.~\ref{alg:ialm}, with $\{\beta_k, v_k\}$ set as in~\eqref{eq:choice-v-beta} and with Alg.~\ref{alg:acceleratedNesterov} as a subroutine to compute $\vx^{k+1}$ by solving~\eqref{eq:sub-3}, can find a near $\varepsilon$-KKT point of problem~\eqref{problem:main} by $T^{\mathrm{total}}_2$ proximal gradient steps. Here, $T^{\mathrm{total}}_2$ satisfies 
			\begin{dmath}\label{eq:total-comp-II}
			T^{\mathrm{total}}_2 \leq 2K+K\left(2\sqrt{\frac{C_6}{\rho}} + 2\sqrt{\frac{\sqrt{K} \beta_0C_2}{\rho}}+ \frac{1}{\log 2}\right)\log \left( 9 \varepsilon_K^{-1} D\sqrt{\frac{1}{\rho}}(C_6 +  \sqrt{K}\beta_0 C_2)^{\frac{3}{2}}\right),
			\end{dmath}
			where $C_2$ and $C_6$ are defined in Lemmas~\ref{lem:innercase1} and~\ref{lem:innercase2}, 
			$K$ is given in Theorem~\ref{lem:2}, and $\vareps_k$ is in~\eqref{eq:terminate2ep}. 
		\end{theorem}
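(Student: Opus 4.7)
The plan is to mirror the proof of Theorem~\ref{thm:totalcase1} almost verbatim, since the only structural changes are the replacement of the outer complexity bound (Theorem~\ref{th:caseIOuterResult} becomes Theorem~\ref{lem:2}) and the inner subproblem smoothness constant ($L_{\widetilde{f}^k}$ becomes $L_{\widehat{f}^k}$ given in Lemma~\ref{lem:innercase2}). First, I would invoke Theorem~\ref{lem:2} to conclude that Alg.~\ref{alg:ialm}, under the stated parameter choices, produces a near $\varepsilon$-KKT point of~\eqref{problem:main} within at most $K = \overline{K}_1 + \overline{K}_2$ outer iterations, where each primal update requires an approximate minimizer of~\eqref{eq:sub-3} satisfying~\eqref{eq:terminate2}.

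Next, I would bound the total work. By Lemma~\ref{lem:innercase2}, the $k$-th outer iteration costs at most $T_2^k$ proximal gradient steps, where $T_2^k$ has the closed form given in~\eqref{eq: innerComplexityOurProblem2} with smoothness parameter $L_{\widehat{f}^k} = C_6 + \sqrt{k+1}\,\beta_0 C_2$. Summing over $k = 0, \dots, K-1$ gives
\[
T_2^{\mathrm{total}} \;\le\; \sum_{k=0}^{K-1} T_2^k.
\]
Then, exactly as in the proof of Theorem~\ref{thm:totalcase1}, I would exploit two monotonicity facts: (i) $L_{\widehat{f}^k}$ is non-decreasing in $k$ because $\beta_k = \beta_0\sqrt{k+1}$ is non-decreasing, so $L_{\widehat{f}^k} \le L_{\widehat{f}^{K-1}}$ for all $k \le K-1$; and (ii) $\varepsilon_k$ defined in~\eqref{eq:terminate2ep} is non-increasing in $k$ (since $\beta_k$ grows), so $\varepsilon_K \le \varepsilon_k$ for all $k \le K$. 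These allow replacing each summand by the largest one, yielding
\[
T_2^{\mathrm{total}} \;\le\; 2K + K\left(2\sqrt{\tfrac{L_{\widehat{f}^{K-1}}}{\rho}} + \tfrac{1}{\log 2}\right) \log \frac{9 D L_{\widehat{f}^{K-1}} \sqrt{L_{\widehat{f}^{K-1}}/\rho}}{\varepsilon_K}.
\]

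Finally, to match the form in~\eqref{eq:total-comp-II}, I would apply the elementary inequality $\sqrt{a+b} \le \sqrt{a} + \sqrt{b}$ for $a,b \ge 0$ to split $\sqrt{L_{\widehat{f}^{K-1}}/\rho} \le \sqrt{C_6/\rho} + \sqrt{\sqrt{K}\beta_0 C_2/\rho}$, and use $L_{\widehat{f}^{K-1}} \le C_6 + \sqrt{K}\beta_0 C_2$ inside the logarithm. Putting these together delivers the claimed bound. There is no real obstacle: the argument is a straightforward adaptation of the proof of Theorem~\ref{thm:totalcase1}, and the only care needed is to verify that the monotonicity of $L_{\widehat{f}^k}$ and $\varepsilon_k$ in the compositional setting holds under the chosen $\nu_k = \nu$ (which it does, since $\nu$ is a fixed constant and the $k$-dependence of $L_{\widehat{f}^k}$ enters only through $\beta_k$).
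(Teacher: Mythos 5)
Your proposal is correct and follows essentially the same route as the paper: the paper's proof simply sets $\nu_k=\nu$, bounds $T_2^{\mathrm{total}} \le \sum_{k=0}^{K-1} T_2^k$ with $T_2^k$ from Lemma~\ref{lem:innercase2}, and then repeats the argument of Theorem~\ref{thm:totalcase1} (monotonicity of $L_{\widehat{f}^k}$ and $\varepsilon_k$, followed by $\sqrt{a+b}\le\sqrt{a}+\sqrt{b}$). Your added check that the $k$-dependence of $L_{\widehat{f}^k}$ enters only through $\beta_k$ once $\nu$ is fixed is exactly the point that makes the adaptation go through.
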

\begin{proof}
Set $\nu_{k}=\nu$ as in Theorem~\ref{lem:2}. Notice that $T^{\mathrm{total}}_2 \le \sum_{k=0}^{K-1} T^k_2$, where $T^k_2$ is given in~\eqref{eq: innerComplexityOurProblem2}.  We obtain the desired result by the same arguments in the proof of Theorem~\ref{thm:totalcase1}.
\end{proof}
        \begin{remark}
           \label{rem:rhoep2}
           By $K=\cO(\vareps^{-2})$, $\nu_k=\nu=\cO(\vareps^2)$,  we have from \eqref{eq:total-comp-II} and the definition of $C_6$ in Lemma~\ref{lem:innercase2}  that $$T_2^{\mathrm{total}} = \widetilde{\mathcal{O}}\left(K \sqrt{ \frac{\|\nabla \vc\|}{\nu}}+K^{\frac{5}{4}}\right)= \widetilde{\mathcal{O}}(\varepsilon^{-3}).$$ 
        \end{remark}   

\subsection{General Weakly-Convex Objective} \label{subsec: caseIII}
In this subsection, we make the following structural assumption on the function~$f$ in~\eqref{problem:main}.
	\begin{assumption}
		\label{ass:case3}
In~\eqref{problem:main}, $f$ satisfies $\|\vxi\| \le \widehat{B}_f$, for any $\vxi\in \partial f(\vx)$ and any $\vx\in \cX$.  
	\end{assumption}

Without a smoothness structure, we do not expect an FOM to produce an $\vareps_k$-stationary point of the problem $\min_\vx\widetilde{\mathcal{L}}_{\beta_k}({\vx}; \vy^k, \vz^k)$ as in~\eqref{eq:terminate1}. Instead, 
we compute a near-optimal solution ${\vx}^{k+1}$ satisfying 
\begin{equation}\label{eq:cond-general-x}
  \widetilde{\mathcal{L}}_{\beta_k}({\vx}^{k+1}; \vy^k, \vz^k)- \min_{\vx}\widetilde{\mathcal{L}}_{\beta_k}\left( \vx; \vy^k, \vz^k\right) \le \frac{\vareps_k^2}{\rho}.
\end{equation}

Let $\vx^{k+1}_*=\argmin_{\vx}\widetilde{\mathcal{L}}_{\beta_k}\left( \vx; \vy^k, \vz^k\right)$. Then by the $\rho$-strong convexity of $\widetilde{\mathcal{L}}_{\beta_k}({\vx}; \vy^k, \vz^k)$, it holds 
\begin{equation}\label{eq:dist-x-k-xstar}
\begin{aligned}
  \|\vx^{k+1}_* - {\vx}^{k+1}\|^2 \le \frac{2}{\rho}\left(\widetilde{\mathcal{L}}_{\beta_k}({\vx}^{k+1}; \vy^k, \vz^k)- \widetilde{\mathcal{L}}_{\beta_k}({\vx}^{k+1}_*; \vy^k, \vz^k)\right) \overset{\eqref{eq:cond-general-x}}\le \frac{2\vareps_k^2}{\rho^2},\\
\text{ for all } k\ge0.
\end{aligned}
\end{equation}
Hence, $\vx^{k+1}$ satisfies~\eqref{eq:approx-cond} with $\overline{f} = f$,   $\overline{B}_f=\widehat{B}_f$, $\widehat{\vx}^{k+1} = \vx_*^{k+1}$, $\delta_k = \frac{\sqrt{2}\varepsilon_k}{\rho}$ for all $k\ge 0$.
  Let $\varepsilon_k \leq \frac{1}{\beta_k(k+2) (\log (k+2))^2}$.
  Then $\sum_{k \ge 0} \beta_k {\delta}_k < \infty$. Thus all the lemmas in Sect.~\ref{sec: ConveAnalysis} hold in this case.

    \begin{theorem}[Outer iteration complexity result III]
    \label{thm:outer3}
         Given $\vareps>0$, under Assumptions~\ref{Assump1}-\ref{Assump5} and~\ref{ass:case3},
         let $\{\vx^k,\vy^k,\vz^k\}$ be generated by Alg.~\ref{alg:ialm} such that~\eqref{eq:obj-opt-error} holds
         with $\varepsilon_k:=\min\Big\{\frac{\varepsilon}{4}, \frac{\rho\vareps}{\sqrt{2}}, 1, 
         \sqrt{\frac{\rho}{2\beta_k}}, \frac{1}{\beta_k(k+2) (\log (k+2))^2}\Big\} $ for all $k\ge0$  and with $\{\beta_k, v_k\}$ set as in~\eqref{eq:choice-v-beta}. 
			Then for some $k < {K} := \widetilde{K}_1 + \widetilde{K}_2$, $\vx^{k+1}$ is a near $\varepsilon$-KKT point of problem~\eqref{problem:main},  
			where $\widetilde{K}_1 := \left\lceil \max\{C_p^2\beta_0^{-2}, (B_p^2+\frac{5C_p^2}{4})^2\beta_0^{-2}\} \varepsilon^{-2}\right\rceil$ and $\widetilde{K}_2 := \left \lceil 16 C_{\vx}\rho \varepsilon^{-2}\right \rceil$, with $B_p$, $C_p$, and $C_{\vx}$  given in Lemma~\ref{lem:boundxz}, Lemma~\ref{lem:tilderhodeltak}, and Lemma~\ref{lem: ProxErr}, respectively. 
    \end{theorem}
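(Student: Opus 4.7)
The plan is to imitate the proof of Theorem~\ref{th:caseIOuterResult}, replacing the near-stationarity arguments with analogues adapted to the weaker near-optimality condition~\eqref{eq:cond-general-x}. Set $\vx^{k+1}_*:=\argmin_{\vx}\widetilde{\cL}_{\beta_k}(\vx;\vy^k,\vz^k)$, which is well defined by $\rho$-strong convexity. I would exhibit a $k^*\in[\widetilde{K}_1,\widetilde{K}_1+\widetilde{K}_2-1]$ at which $\vx^{k^*+1}_*$ is an $\varepsilon$-KKT point of~\eqref{problem:main} with multipliers $\overline{\vy}^{k^*}:=\vy^{k^*}+\beta_{k^*}(\vA\vx^{k^*+1}_*-\vb)$ and $\overline{\vz}^{k^*}:=[\vz^{k^*}+\beta_{k^*}\vg(\vx^{k^*+1}_*)]_+$, and then invoke~\eqref{eq:dist-x-k-xstar} to conclude $\|\vx^{k^*+1}-\vx^{k^*+1}_*\|\le \sqrt{2}\varepsilon_{k^*}/\rho\le\varepsilon$ from $\varepsilon_k\le\rho\varepsilon/\sqrt{2}$; this identifies $\vx^{k^*+1}$ as a near $\varepsilon$-KKT point per Definition~\ref{def: epskkt}.

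For primal feasibility and complementary slackness I would first run exactly the two-step reasoning of Theorem~\ref{th:caseIOuterResult}: Lemma~\ref{lem:tilderhodeltak} together with $\beta_k=\beta_0\sqrt{k+1}$ and $\widetilde{K}_1\ge C_p^2\beta_0^{-2}\varepsilon^{-2}$ yields $\sqrt{\|\vA\vx^{k+1}-\vb\|^2+\|[\vg(\vx^{k+1})]_+\|^2}\le\varepsilon$ for all $k\ge \widetilde{K}_1$, and Lemma~\ref{lem: CompSlack} combined with $\widetilde{K}_1\ge(B_p^2+\tfrac{5C_p^2}{4})^2\beta_0^{-2}\varepsilon^{-2}$ gives $\sum_i|\overline{z}_i^k g_i(\vx^{k+1})|\le\varepsilon$. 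Transferring these two bounds from $\vx^{k+1}$ to $\vx^{k+1}_*$ costs an extra Lipschitz term of order $(\|\vA\|+\sqrt{m}B_g)\delta_k$, absorbed into $\varepsilon$ because the stringent choice $\varepsilon_k\le 1/(\beta_k(k+2)(\log(k+2))^2)$ makes $\delta_k$ vanishingly small relative to $1/\beta_k$.

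The crux is a substitute for Lemma~\ref{lem: dualFeasCaseI}. Combining the $\rho$-strong convexity of $\widetilde{\cL}_{\beta_k}(\cdot;\vy^k,\vz^k)$ at its minimizer $\vx^{k+1}_*$ with the near-optimality~\eqref{eq:cond-general-x}, I chain
\begin{align*}
\widetilde{\cL}_{\beta_k}(\vx^k;\vy^k,\vz^k) & \ge \widetilde{\cL}_{\beta_k}(\vx^{k+1}_*;\vy^k,\vz^k)+\tfrac{\rho}{2}\|\vx^{k+1}_*-\vx^k\|^2 \\
& \ge \widetilde{\cL}_{\beta_k}(\vx^{k+1};\vy^k,\vz^k)-\tfrac{\varepsilon_k^2}{\rho}+\tfrac{\rho}{2}\|\vx^{k+1}_*-\vx^k\|^2.
\end{align*}
Substituting $\widetilde{\cL}_{\beta_k}(\vx^{k+1};\vy^k,\vz^k)=\cL_{\beta_k}(\vx^{k+1};\vy^k,\vz^k)+\rho\|\vx^{k+1}-\vx^k\|^2$ and dropping the nonnegative last term yields $\tfrac{\rho}{2}\|\vx^{k+1}_*-\vx^k\|^2\le\cL_{\beta_k}(\vx^k;\vy^k,\vz^k)-\cL_{\beta_k}(\vx^{k+1};\vy^k,\vz^k)+\varepsilon_k^2/\rho$. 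Telescoping over $k\in[\widetilde{K}_1,\widetilde{K}_1+\widetilde{K}_2-1]$, applying Lemma~\ref{lem: ProxErr}, and using $\varepsilon_k\le\varepsilon/4$ together with $\widetilde{K}_2\ge 16C_{\vx}\rho\varepsilon^{-2}$, I extract $k^*$ with $\|\vx^{k^*+1}_*-\vx^{k^*}\|\le \varepsilon/(2\rho)$. Since $\vzero\in\partial_{\vx}\widetilde{\cL}_{\beta_{k^*}}(\vx^{k^*+1}_*;\vy^{k^*},\vz^{k^*})=\partial_{\vx}\cL_0(\vx^{k^*+1}_*;\overline{\vy}^{k^*},\overline{\vz}^{k^*})+2\rho(\vx^{k^*+1}_*-\vx^{k^*})$, the dual-feasibility at $\vx^{k^*+1}_*$ is bounded by $2\rho\|\vx^{k^*+1}_*-\vx^{k^*}\|\le\varepsilon$, completing the $\varepsilon$-KKT certification.

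The main obstacle is that near-optimality of $\vx^{k+1}$, unlike near-stationarity, does not directly furnish a sufficient-descent inequality controlling $\|\vx^{k+1}-\vx^k\|^2$. The remedy, described above, is to instead telescope $\|\vx^{k+1}_*-\vx^k\|^2$, which is controllable via $\rho$-strong convexity, and to exploit that $\vx^{k+1}_*$ is an exact minimizer of $\widetilde{\cL}_{\beta_k}$ so that its zero subgradient certifies dual-feasibility up to the proximal correction $2\rho(\vx^{k+1}_*-\vx^k)$. A secondary bookkeeping nuisance is moving PF and CS from $\vx^{k+1}$ to $\vx^{k+1}_*$, which is handled by the $\delta_k \ll 1/\beta_k$ consequence of the chosen $\varepsilon_k$.
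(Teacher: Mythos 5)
Your proposal matches the paper's proof in all essentials: the paper likewise combines the $\rho$-strong convexity of $\widetilde{\cL}_{\beta_k}(\cdot;\vy^k,\vz^k)$ at its exact minimizer $\vx^{k+1}_*$ with the near-optimality condition to telescope $\frac{\rho}{2}\|\vx^{k+1}_*-\vx^k\|^2$ against Lemma~\ref{lem: ProxErr}, certifies dual feasibility at $\vx^{k'+1}_*$ through its zero subgradient plus the $2\rho(\vx^{k'+1}_*-\vx^{k'})$ proximal correction, and then passes to $\vx^{k'+1}$ via \eqref{eq:dist-x-k-xstar} and $\vareps_k\le\rho\vareps/\sqrt{2}$. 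The only cosmetic difference is in primal feasibility and complementary slackness: the paper applies Lemmas~\ref{lem:tilderhodeltak} and~\ref{lem: CompSlack} directly to $\vx^{k+1}_*$ (which satisfies \eqref{eq:approx-cond} with $\delta_k=\vareps_k=0$), avoiding the detour of bounding these quantities at $\vx^{k+1}$ and transferring back with a Lipschitz correction of order $(\|\vA\|+\sqrt{m}B_g)\delta_k$, a step that in your version would require a small additional check that the stated constants in $\widetilde{K}_1$ still absorb the extra term.
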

    \begin{proof}
    Apply Lemma~\ref{lem:tilderhodeltak} to $\vx^{k+1}_*$, i.e., $\delta_k=0, \vareps_k=0$ for all $k\ge0$. We have 
	\begin{equation}\label{eq: genercondi1}
	 	{\left\|\vA {\vx}^{k+1}_*- \vb\right\|^2 + \left\|\left[\vg\left( {\vx}^{k+1}_*\right)\right]_+\right\|^2} \leq \frac{C_p^2}{\beta_k^2},\ \forall\, k\ge0. 
	\end{equation}
Then by~\eqref{eq:CS-k+1-x} in Lemma~\ref{lem: CompSlack} and~\eqref{eq: genercondi1}, 
	it holds that 
	\begin{equation}\label{eq: genercondi2}
	 	\sum_{i=1}^m|[ z_i^{k} + \beta_{k} g_i({\vx}^{k+1}_*)]_+g_i({\vx}^{k+1}_*) |\,\,\leq \frac{1}{\beta_k}\left( B_p^2 + \frac{5C_p^2}{4}\right),\ \forall\, k\ge0. 
			\end{equation} 
In addition, by the $\rho$-strong convexity of $\widetilde{\mathcal{L}}_{\beta_k}$, we have	 $$\widetilde{\mathcal{L}}_{\beta_k}( {\vx}^{k+1}_*; \vy^k, \vz^k) \leq  {\mathcal{L}}_{\beta_k}( \vx^k; \vy^k, \vz^k) - \frac{\rho}{2} \left\|\vx^k - {\vx}^{k+1}_*\right\|^2,$$ which together with~\eqref{eq:cond-general-x}, gives	
			\begin{align}\label{eq:strongConvLgener}
			&  {\mathcal{L}}_{\beta_k}( {\vx}^{k+1}; \vy^k, \vz^k) + \rho \|{\vx}^{k+1}-\vx^{k}\|^2 + \frac{\rho}{2} \|{\vx}^{k+1}_*-\vx^{k}\|^2 
			\leq  {\mathcal{L}}_{\beta_k}( \vx^k; \vy^k, \vz^k) + \frac{\vareps_k^2}{\rho}.
			\end{align}			 
{Sum up~\eqref{eq:strongConvLgener} over $k$ from $\widetilde{K}_1$ to $K-1$ and use~\eqref{eq:sm-L-value} to have
\begin{align*}
    \frac{\rho}{2} \sum_{k=\widetilde{K}_1}^{K - 1}\|{\vx}^{k+1}_*-\vx^{k}\|^2 \le\sum_{k= \widetilde{K}_1}^{K - 1} \frac{\vareps_k^2}{\rho} + C_\vx.
\end{align*}
Since $\vareps_k \le \frac{\vareps}{4}$, it holds $\sum_{k= \widetilde{K}_1}^{K - 1} \frac{\vareps_k^2}{\rho} \le \frac{\widetilde{K}_2\vareps^2}{16\rho}$. 
Let
\begin{align*}
    k'=\argmin_{\widetilde{K}_1\le k \le K - 1} \|{\vx}^{k+1}_*-\vx^{k}\|^2.
\end{align*}
Then $\|{\vx}^{k'+1}_*-\vx^{k'}\| \le \sqrt{\frac{\vareps^2}{8 \rho^2 } + \frac{2 C_\vx}{\rho \widetilde{K}_2}} \le \frac{\vareps}{2\rho}$ by $\widetilde{K}_2\ge 16 C_{\vx}\rho \varepsilon^{-2}$. 
Now notice
\begin{align*}
    \vzero\in \partial_{\vx}\widetilde{\cL}_{\beta_k}({\vx}^{k+1}_*; \vy^{k}, \vz^{k}) = \partial_{\vx}\mathcal{L}_{\beta_k}({\vx}^{k+1}_*; \vy^{k}, \vz^{k}) + 2\rho({\vx}^{k+1}_* - \vx^k),
 \end{align*}   
 and
 \begin{align*}
    \partial_{\vx}\mathcal{L}_{\beta_k}({\vx}^{k+1}_*; \vy^{k}, \vz^{k}) = \partial_{\vx}\mathcal{L}_0({\vx}^{k+1}_*; \overline{\vy}^{k}, \overline{\vz}^{k})
\end{align*}
with 
\begin{align*}
    \overline{\vy}^k := \vy^k + \beta_k(\vA{\vx}^{k+1}_*-\vb),\ 
    \overline{\vz}^k := [\vz^k + \beta_k \vg({\vx}^{k+1}_*)]_+.
\end{align*}
We obtain that
           $\dist\big(\mathbf{0}, \partial_{\vx}\mathcal{L}_{0}({\vx}^{k'+1}_*; \overline{\vy}^{k'}, \overline{\vz}^{k'})\big) \leq \varepsilon.$ 
This claim, together with 
\eqref{eq: genercondi1}, \eqref{eq: genercondi2}, and the choice of $\widetilde{K}_1$, indicates that ${\vx}^{k'+1}_*$ is an $\vareps$-KKT point of problem~\eqref{problem:main}. Moreover, from~\eqref{eq:dist-x-k-xstar} and $\vareps_k \le \frac{\rho\vareps}{\sqrt{2}}$, it follows that $\|\vx^{k'+1}_* - {\vx}^{k'+1}\| \le \vareps$. Therefore, ${\vx}^{k'+1}$ is a near $\vareps$-KKT point of problem~\eqref{problem:main} by Definition~\ref{def: epskkt}, and we complete the proof.}
    \end{proof}

\begin{remark}
We make a few remarks about Theorem~\ref{thm:outer3} and its implications. First, in a general case, one can apply a subgradient method \cite{nesterov2003introductory} to find ${\vx}^{k+1}$ such that~\eqref{eq:cond-general-x} holds, due to the strong convexity of $\widetilde{\mathcal{L}}_{\beta_k}(\,\cdot\,; \vy^k, \vz^k)$. 
    Our $\mathcal{O}(\vareps^{-2})$ outer iteration complexity result matches with that in \cite{zeng2022moreau}, but we allow a smaller penalty parameter for nondifferentiable problems and thus can potentially achieve a lower overall complexity result. Second, when there are certain special structures on $f$ such as those in Sect.~\ref{subsec: caseI} and Sect.~\ref{subsec: caseII}, one can apply a more efficient way to obtain ${\vx}^{k+1}$ and achieve a lower complexity. The best way to compute ${\vx}^{k+1}$ will depend on the structure on $f$. 
    For example, 
    when $f = \max\{f_1,f_2\}$ where $f_1$ and $f_2$ are both $\rho$-weakly convex and smooth, one can apply the Moreau-envelope based smoothing approach in Sect.~\ref{subsec: caseII} and achieve an overall complexity of $\widetilde{\mathcal{O}}(\vareps^{-3})$ to produce a near $\vareps$-KKT point. However, a potentially better way is to have a more efficient subroutine to solve each strongly convex subproblem $\min_\vx\widetilde{\mathcal{L}}_{\beta_k}( \vx; \vy^k, \vz^k)$ by exploiting the special structure of $f$.
    {Notice that 
    \begin{align*}
    &\,f(\vx) + \frac{\rho}{2}\|\vx - \vx^k\|^2 \\
    = &\, \max_{\lambda\in [0,1]} \lambda \left(f_1(\vx)+ \frac{\rho}{2}\|\vx - \vx^k\|^2\right) + (1-\lambda) \left(f_2(\vx)+ \frac{\rho}{2}\|\vx - \vx^k\|^2\right).
    \end{align*}
    Let 
    \begin{align*}
        H_k(\vx):= & h(\vx) +\frac{\rho}{2}\|\vx - \vx^k\|^2+ (\vy^k)\zz(\vA\vx-\vb) \\ & +\frac{\beta_k}{2}\|\vA\vx-\vb\|^2+\frac{\beta_k}{2}\|[ \vg( \vx)+\frac{ \vz^k}{\beta_k}]_+\|^2-\frac{\|\vz^k\|^2}{2\beta_k}.
    \end{align*}
     It then holds
    \begin{align*}
        \min_\vx\widetilde{\mathcal{L}}_{\beta_k}( \vx; \vy^k, \vz^k) = \max_{\lambda\in [0,1]} \min_\vx & \lambda \big(f_1(\vx)+ \frac{\rho}{2}\|\vx - \vx^k\|^2\big) + (1-\lambda) \big(f_2(\vx) \\ 
        &+ \frac{\rho}{2}\|\vx - \vx^k\|^2\big) + H_k(\vx).
    \end{align*}   
By exploiting the 1-dimension of $\lambda$, we can follow \cite{xu2022first} and apply a bisection method to search for the optimal~$\lambda$, by which we can then find the desired $\vx^{k+1}$ in $\widetilde{\mathcal{O}}(\sqrt{\beta_k})$ iterations. Hence, by Theorem~\ref{thm:outer3}, the total iteration complexity is $\widetilde{O}(\varepsilon^{-2.5})$ to produce a near $\vareps$-KKT point. We leave details to interested readers.}
\end{remark}
  
 \section{Numerical Experiments} \label{sec: NumericalExp}
		In this section, we conduct numerical experiments to demonstrate the effectiveness of our algorithm, named DPALM. We apply it to the non-convex linearly constrained quadratic problem (LCQP), non-convex quadratically constrained quadratic problem (QCQP), linearly constrained robust nonlinear least square, and fairness classification.  
  All of these tests are performed in MATLAB 2022a on an iMAC with 40GB memory. 
	\subsection{Data generation and parameter setting}\label{sec:data-gen}
    Synthetic data are used for the experiments in Sect.~\ref{subsec: QP}, Sect.~\ref{subsec: QCQP}, and Sect.~\ref{subsec: nllsq}. 
Details on generating the data are described below.
        
For the linear constraint $\vA\vx=\vb$ in Sect.~\ref{subsec: QP} and Sect.~\ref{subsec: nllsq}, $\vA$ and $\vb$ are generated by the MATLAB command \verb|A = [randn(n,d-n), eye(n)]| and \verb|b = randn(n,1) + 0.1|. The box constraint $[l_1,u_1] \times\cdots\times [l_d, u_d]$ is set with $l_i=-5$ and $u_i=5$ for each $i\in [d]$. For every generated instance, we observe that all entries of $\vb$ are in the box constraint so all instances are feasible at $\vx=[\vzero;\vb]$.        
Each of the vectors $\{\vc_i\}_{i=0}^m$ is generated by the command \verb|randn(d,1)|. 
The matrix $\vQ_0$ in Sect.~\ref{subsec: QP} and Sect.~\ref{subsec: QCQP} is generated by the command \verb|U*diag(max(0, randn(d,1)*5))*U' - rho*eye(d)|, where \verb|U| is generated as an orthonormal matrix by the command \verb|[U,~]=qr(randn(d))|, and \verb|rho| varies in $\{0.1,1,10\}$. Hence the minimum eigenvalue of $\vQ_0$ is set as desired. All matrices $\{\vQ_j\}$ in Sect.~\ref{subsec: nllsq} are generated in the same way. In Sect.~\ref{subsec: QCQP}, each of the matrices $\{\vQ_j\}_{j=1}^m$ is positive semidefinite and generated by the command \verb|U*diag(rand(d,1)*5 + 1)*U'|, where \verb|U| has size of $d\times (d-5)$ and orthonormal columns and is generated by the command \verb|[U,~]=qr(randn(d)), U = U(:,1:d-5)|; each $\gamma_j$ is positive and generated by the command \verb|max(0, randn*2) + 0.1|, and thus Slater's condition holds at the origin for instances of~\eqref{problem: qcqp}.
        
In each test, all compared methods use the same initial point, and the initial dual iterate (if needed) is always set to a zero vector. A warm-start strategy (for initial point and smoothness constant) is always adopted between subproblems for all methods. Except HiAPeM for which we use its default settings, we apply Alg.~\ref{alg:acceleratedNesterov} as the subroutine and set $\gamma_u = 3$ and $\gamma_d = 5$ when the line search technique is used. For the test on LCQP, we use the fixed smoothness constant $L_k = \|\vQ_0\| + \beta_k\|\vA^\top\vA\|$ for the $k$-th subproblem; for all other tests, we apply the line search technique, and the initial smoothness constant $L_0$ is set to $\|\vQ_0\|$ for QCQP and $10^2$ for the tests in Sect.~\ref{subsec: nllsq} and Sect.~\ref{subsec: fairnessROC}. 
The parameter $\sigma$ for NL-IAPIAL is set to $\frac{1}{\sqrt{2}}$.    The proximal parameter of the Prox-linear method applied to~\eqref{problem:l-rnls} and~\eqref{exp4: main}  is set to $\frac{1}{2\rho}$, where $\rho$ is the weak convexity constant. Hence, we use the same proximal parameter for DPALM and the Prox-linear method. The values of other parameters that are used will be given below. 

		\subsection{Non-convex Linearly-Constrained Quadratic Program (LCQP)}\label{subsec: QP}
	In this subsection, we apply different methods to solve LCQP in the form of 
		\begin{align}\label{problem: qp} 
			&\min_{\vx\in \mathbb{R}^d} \frac{1}{2}\vx^{\top}\vQ_0\vx + \vc_0^{\top}\vx, \notag \\ &\st \vA\vx = \vb,\ x_i \in [l_i, u_i] = [-5, 5],\, \text{ for all }  i \in [d],
		\end{align}
	where the data $\{\vQ_0, \vc_0, \vA, \vb\}$ are generated in the way as described in Sect.~\ref{sec:data-gen}.  

{We first compare the performance of DPALM with damped dual stepsize and that with full dual stepsize. We set $\beta_k=\beta_0\sqrt{k+1}$ and \mbox{$v_k=\frac{v_0}{\sqrt{k+1}},\forall\, k\ge0$} in Alg.~\ref{alg:ialm}. For both variants, we simply choose $\beta_0=0.1$. For the former variant, we choose $v_0\in\{0.1,1,10\}$, and for the latter, we set $v_0=+\infty$, thus \mbox{$\alpha_k=\beta_k,\forall\, k\ge0$} in Alg.~\ref{alg:ialm}.  Ten independent instances of \eqref{problem: qp} are generated with $n=10$, $d=100$, and the weak convexity constant $\rho=1$. The left plot in Figure~\ref{fig:damp-vs-full} shows the total number of gradient evaluations by DPALM  with different values of $v_0$ to produce an $\vareps$-KKT solution of each random instance, where $\vareps=10^{-3}$. From the plot, we observe that DPALM with $v_0=10$ performs exactly the same as that with $v_0=+\infty$. This indicates that a full dual stepsize is adopted at each outer iteration for all tested instances when $v_0=10$. 
In addition, we observe that DPALM with $v_0=1$ takes slightly fewer gradient evaluations than that with a full dual stepsize, in average, 9816 versus 9966, but significantly more gradients are evaluated when $v_0=0.1$. This indicates that applying overly damped dual stepsize may significantly slow down the empirical convergence of DPALM. In the middle and right plots of Figure~\ref{fig:damp-vs-full}, we show the curve of the adopted dual stepsize $\alpha_k$ and that of  $\beta_k$ for \mbox{instance \#1} and instance \#9, where most significant difference is observed between the two variants of DPALM with $v_0=1$ and $v_0=+\infty$. From the plots, we observe that though in the beginning, damped stepsize is adopted, eventually it holds $\alpha_k=\beta_k$ even for $v_0=0.1$. Based on these observations, in all the subsequent experiments, we use $v_k = \frac{v_0}{\sqrt{k+1}},\forall\, k\ge0$ and set $v_0$ to a relatively large value, i.e.,  
$v_0=200$, to achieve good empirical performance as well as to provide a theoretical safeguard.} 
\begin{figure}
    \centering
    \includegraphics[width=0.3\linewidth]{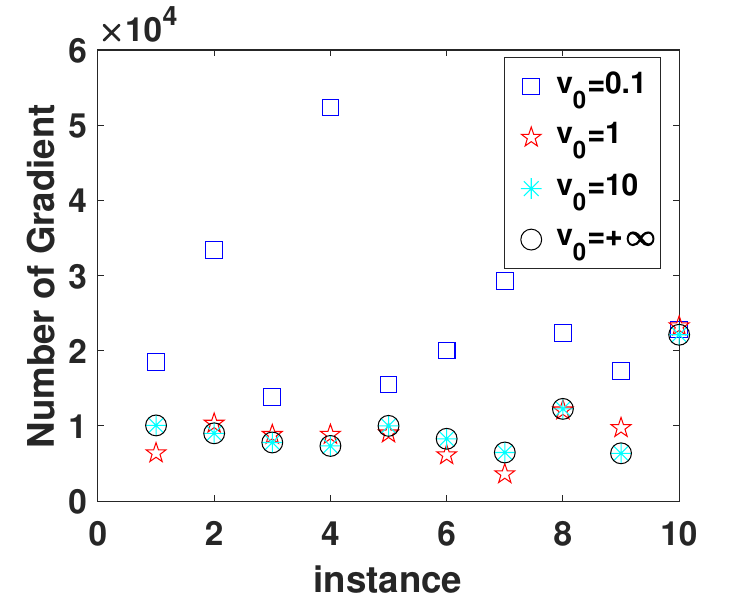}
    \includegraphics[width=0.3\linewidth]{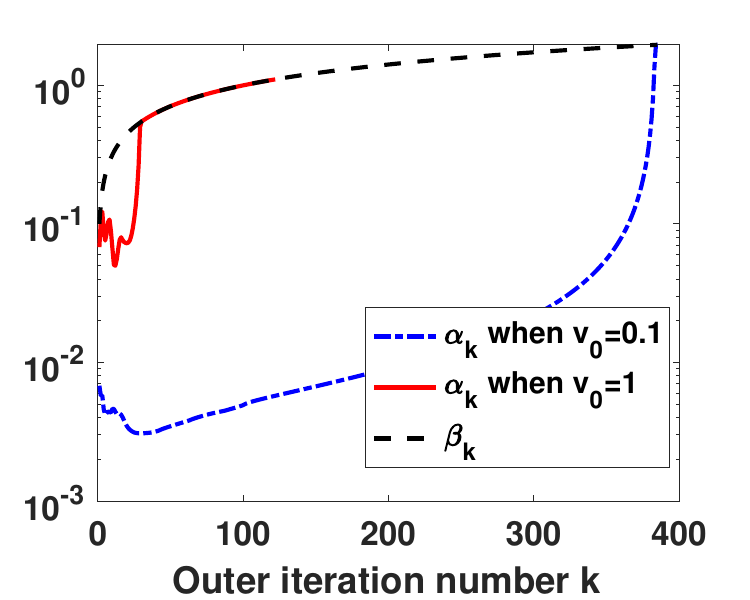}
    \includegraphics[width=0.3\linewidth]{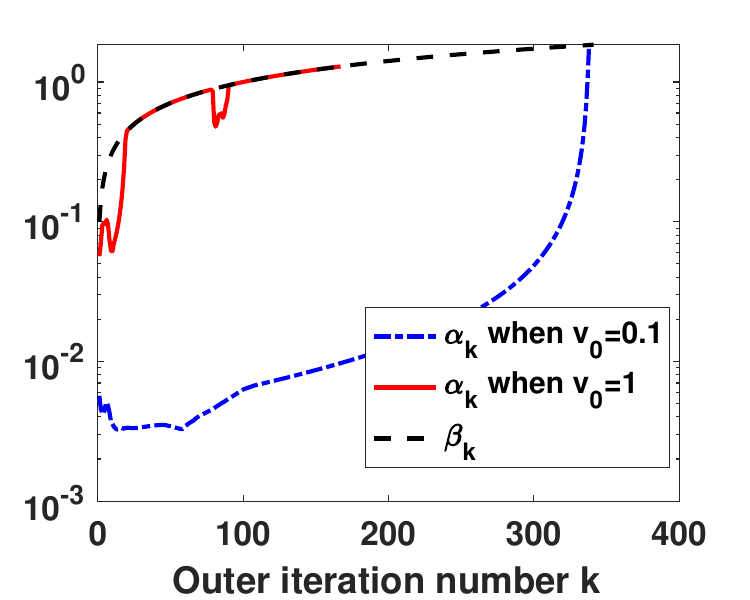}
    \caption{Left: the number of gradient evaluations by DPALM with different values of $v_0$ on solving 10 independent random instances of \eqref{problem: qp} with $n=10$, $d=100$, and $\rho=1$; Middle and Right: values of the adopted dual stepsize $\alpha_k$ and $\beta_k$ on instance \#1 and instance \#9.}
    \label{fig:damp-vs-full}
\end{figure}

      Then we compare DPALM to HiAPeM~\cite{li2021augmented}, LiMEAL~\cite{zeng2022moreau}, NL-IAPIAL~\cite{kong2022iteration}, and {IPL(A), as a variant of NL-IAPIAL in \cite{kong2022iteration}}.
We set $n=10$ and  $d = 1000$, and 
vary the weak convexity constant $\rho\in\{0.1, 1, 10\}$. Each algorithm is terminated if an $\varepsilon$-KKT solution is found or after $10^4$ outer iterations, where we set $\varepsilon=10^{-3}$. 
For DPALM and NL-IAPIAL, we tune the initial penalty parameter $\beta_0$ by picking the best one from $\{10^{-4}, 10^{-3}, 10^{-2}, 10^{-1}, 1, 10, 10^2 \}$ for each value of~$\rho$. This way, we have $\beta_0 = 10^{-3}, 10^{-4}, 10$ and $\beta_0 = 10^{-3}, 10^{-4}, 100$ corresponding to $\rho = 0.1, 1, 10$ respectively for DPALM and NL-IAPIAL. For LiMEAL, we set its parameters to $\gamma = \frac{1}{2\|\vQ_0\|}$ and $ \eta = 1.5$. We tune its penalty parameter from the same set of values that we use to tune for DPALM and obtain \mbox{$\beta = 0.1, 1, 100$} corresponding to  $\rho = 0.1, 1, 10$. For HiAPeM, we use its default value $\beta_0=0.01$ but set its parameter $N_0$ to $10^4$. Here, $N_0$ is the number of calls to ALM as subroutine in the initial stage of HiAPeM. Since we set the maximum number of outer iterations to $10^4$, using $N_0=10^4$ means that we run HiAPeM by solely using ALM to solve its proximal point subproblems. This yields the best performance for HiAPeM, as demonstrated in \cite{li2021augmented}. {
For IPL(A), we follow \cite{kong2022iteration} and set its parameters\footnote{We refer readers to \cite{kong2022iteration} for the meanings of its parameters.} to 
\begin{align*}
\beta_0 = \max\left\{1, \frac{L_f}{B_g^2} \right\}, \quad  \lambda = \frac{1}{\rho}, \quad \sigma = \sqrt{0.3}, \quad \nu = \sqrt{\sigma(\lambda L_f + 1)},
\end{align*}
and the update condition for doubling $\beta$ is changed to 
    $\Delta_k \leq  \frac{\lambda(1-\sigma^2)\varepsilon^2}{4(1+2\nu)^2}.$
}

For all the compared methods, we start from the same feasible point~$\vx^0$.
All algorithms use Nesterov's APG \cite{nesterov2013gradient} to solve their core strongly convex subproblems by warm start. Notice that our implementation of LiMEAL is different from that in the numerical experiment of \cite{zeng2022moreau} but instead we follow its update given in Eqn.~(9). 

For each value of $\rho$, we generate 10 independent random LCQP instances. In Table~\ref{table:qprecent}, we present the violation to PF and DF conditions, running time (in seconds), and the number of gradient evaluations (shortened by \verb|pres|, \verb|dres|, \verb|time|, \verb|#Grad|, respectively), averaged over 10 instances, 
for each method.  We also track which method performs the best in terms of \#Grad for each value of $\rho$, and the percentage is reported in Table~\ref{table:qprecent}. In Fig.~\ref{figure: qpfig}, we plot violation to PF and DF at each outer iteration, from the first random instance for each $\rho$, versus the number of gradient evaluations for all compared methods.
From the results in the table and the figure, we see that our method takes fewer gradient evaluations (and less running time), in average, than all other methods to produce the same-accurate KKT point. 
A larger value of $\rho$ means more non-convexity and thus a harder instance. However, we notice that all compared methods take fewer gradient evaluations for $\rho=10$ than $\rho=0.1$ and $1$. 
 This is possible because we tune the algorithm parameter for each $\rho$, or because we aim at producing a near-KKT point instead of a {globally} optimal solution. 

\begin{landscape}
		\begin{table}[htp]
           \vspace{4cm}
				\resizebox{1.5\textwidth}{!}{
					\begin{tabular}{|c||cccc|cccc|cccc|cccc|cccc|}
						\hline
						 & pres & dres & time & \#Grad & pres & dres & time & \#Grad & pres & dres & time & \#Grad & pres & dres & time & \#Grad & pres & dres & time & \#Grad\\\hline\hline
						 & \multicolumn{4}{|c|}{HiAPeM with $N_0 = 10^4$} & \multicolumn{4}{|c|}{NL-IAPIAL} & \multicolumn{4}{|c|}{IPL(A)} & \multicolumn{4}{|c|}{LiMEAL} & \multicolumn{4}{|c|}{DPALM (proposed)} \\\hline
						\multicolumn{18}{|c|}{weak convexity constant: $\rho=0.1$}\\\hline
						avg. & 3.94e-4 & 9.99e-4 & 221 & 1.29e5 & 9.26e-4 & 1.62e-4 & 6.35 & 2.21e4 & 1.77e-5 & 9.73e-4 & 33.1 & 1.53e5 & 8.68e-4 & 0.0528 & 68.7 & 1.55e5 & 4.53e-4 & 8.72e-4 & 5.99 & \textbf{2.11e4} \\\hline
						var. & 1.11e-7 & 4.88e-11 & 2.21e4 & 7.06e9  & 4.78e-9 & 1.18e-10 & 5.27 & 6.14e7 & 1.48e-11 & 4.42e-10 & 77.9 & 2.18e9 & 1.48e-8 & 0.0058 & 65.5 & 7.66e8 & 9.29e-9 & 3.16e-9 & 5.81 & 6.62e7\\\hline
                        best(\%) &  & 0 & & &  & 40 & & &  & 0 & &  & & 0 & & & &  60 & & \\\hline
						\multicolumn{18}{|c|}{weak convexity constant: $\rho=1.0$}\\\hline
						avg. & 1.64e-4 & 9.93e-4 & 588 & 3.93e5 & 9.08e-4 & 3.78e-4 & 11.2 & 3.44e4 & 1.46e-5 & 9.79e-4 & 33.5 & 1.55e5 & 1.44e-6 & 9.98e-4 & 116 & 3.91e5 & 9.55e-4 & 1.77e-4 & 6.81 & \textbf{1.97e4}\\\hline
						var. & 4.22e-8 & 1.73e-11 & 7.3e4 & 3.6e10  & 8.14e-10 & 3.88e-9 & 25.9 & 2.66e8 & 4.10e-12 & 5.75e-11 & 140 & 3.26e9 & 1.43e-13 & 8.28e-13 & 1.01e3 & 1.18e10 & 7.77e-10 & 3.73e-9 & 5.62 & 5.33e7 \\\hline
                        best(\%)  &  & 0 & & &  & 20 & & &  & 0 & &   & & 0 & & & &  80 & & \\\hline
						\multicolumn{18}{|c|}{weak convexity constant: $\rho=10$}\\\hline
						avg. & 1.16e-4 & 8.27e-4 & 53.6 & 7.97e4  & 1.19e-4 & 4.49e-4 & 11.4 & 5.45e4 & 4.24e-6 & 4.46e-4 & 12.9 & 5.69e4 & 5.80e-5 & 4.90e-4 & 11.3 & 4.61e4 & 1.53e-4 & 6.66e-4 & 6.09 & \textbf{2.95e4}\\\hline
						var. & 2.12e-8 & 7.33e-9 & 185 & 5.91e8 & 1.46e-8 & 7.46e-8 & 3.76 & 8.17e7 & 5.25e-12 & 1.20e-7 & 0.716 & 1.14e7 & 8.86e-9 & 9.89e-8 & 5.13 & 9.41e7 & 6.07-9 & 9.04e-8 & 6.74 & 1.73e8\\\hline
                        best(\%)  &  & 0 & & &  & 0 & & &  & 10 & &   & & 0 & & & &  90 & & \\\hline
					\end{tabular}
				}
        \caption{Average results and variance by the proposed algorithm DPALM, HiAPeM in~\cite{li2021augmented}, LiMEAL in~\cite{zeng2022moreau}, and NL-IAPIAL 
        and \\ IPL(A) in~\cite{kong2022iteration} on solving 10 instances of $\rho$-weakly convex LCQP in~\eqref{problem: qp} of size $n=10$ and $d=1000$, 
		where $\rho \in\{0.1,1.0,10\}$.}\label{table:qprecent}
		\end{table}
\end{landscape}

  \begin{figure}[htbp!]
  \begin{center}
      \begin{tabular}{ccc}
          $\rho = 0.1$ & $\rho = 1$ & $\rho = 10$   \\
          \includegraphics[width=0.29\textwidth]{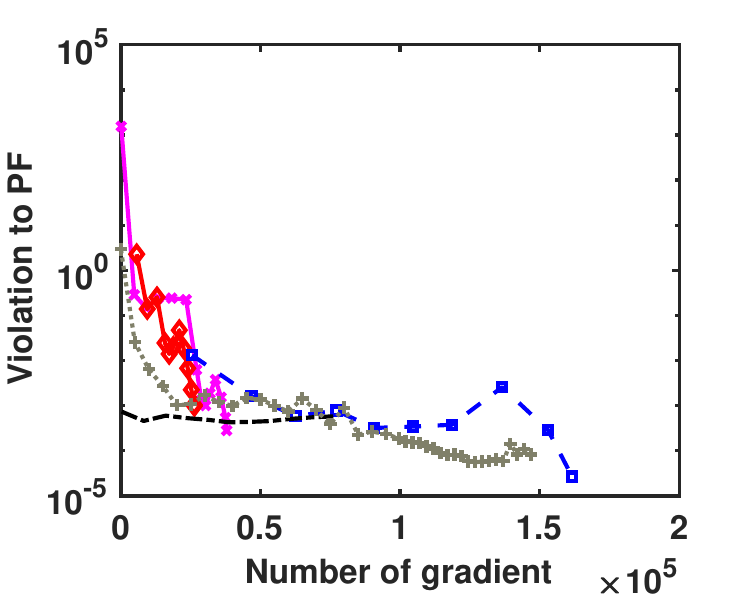} &
          \includegraphics[width=0.29\textwidth]{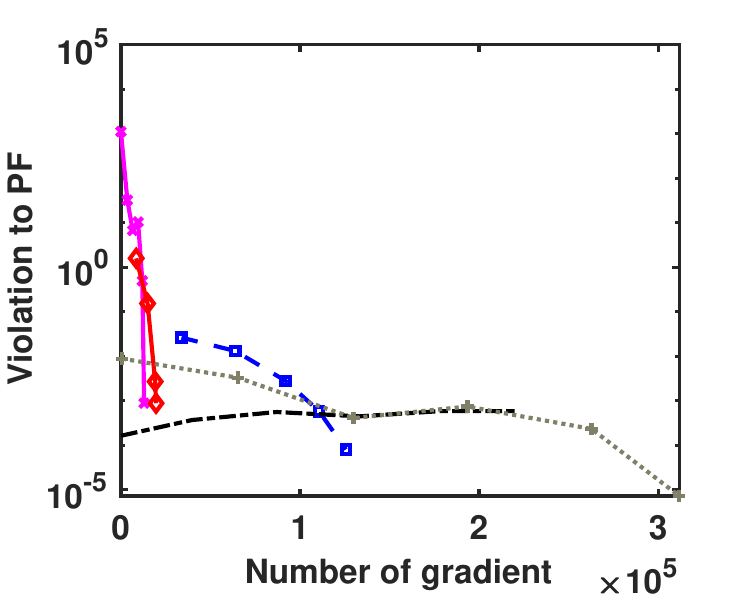} & 
          \includegraphics[width=0.29\textwidth]{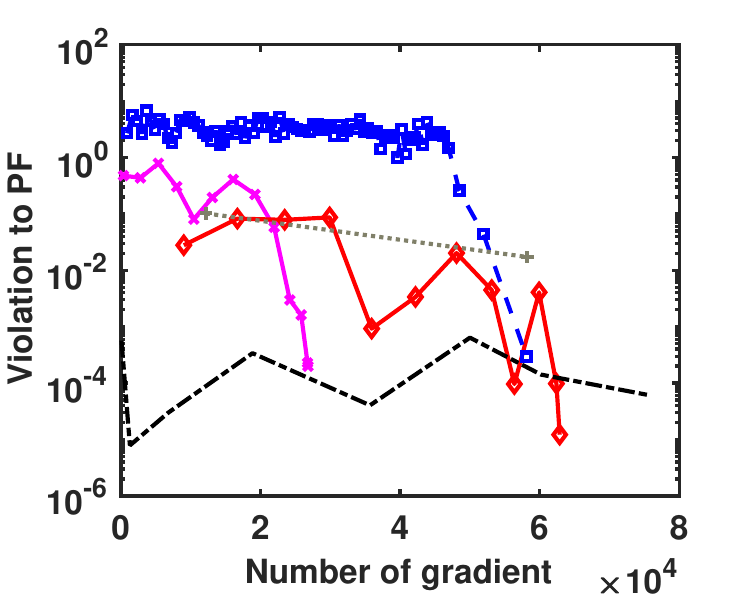} \\ 
          \includegraphics[width=0.29\textwidth]{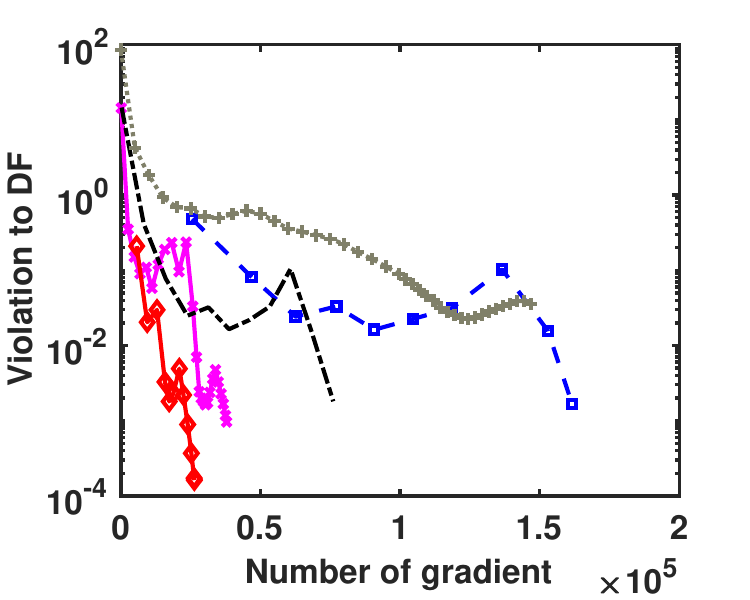} &
          \includegraphics[width=0.29\textwidth]{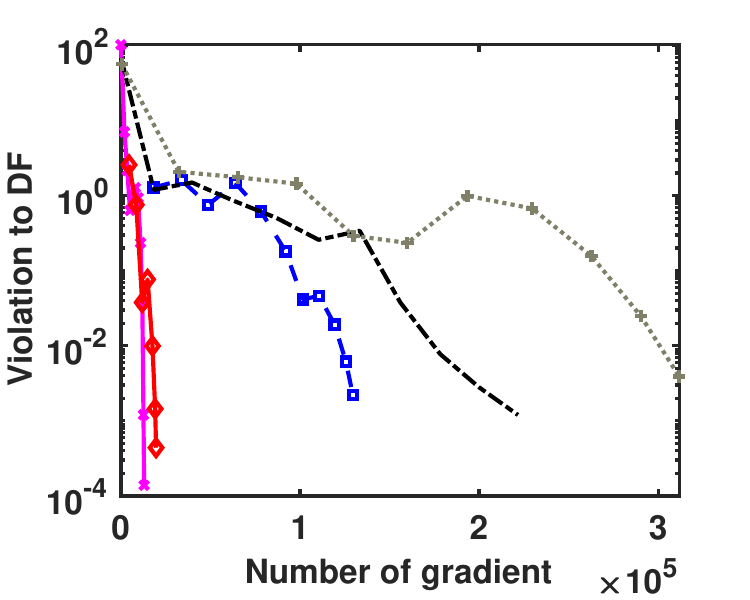} &
          \includegraphics[width=0.29\textwidth]{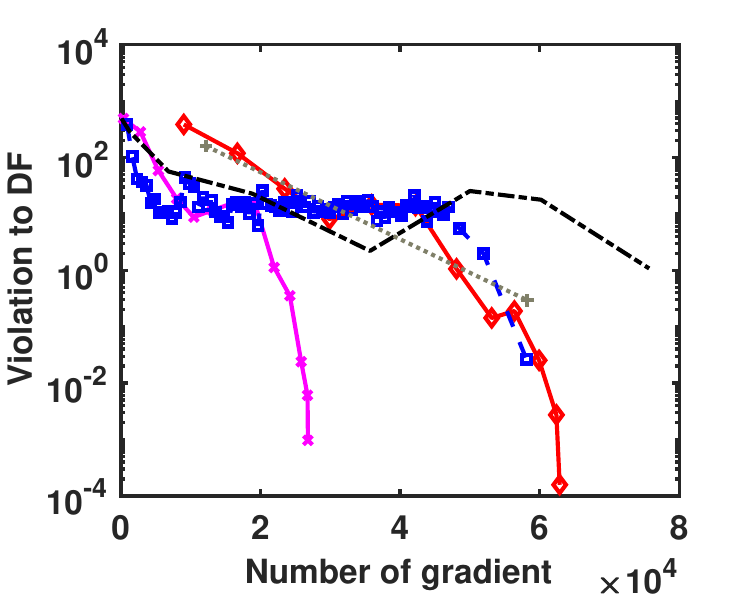} \\
      \end{tabular}
     \includegraphics[width=0.4\textwidth]{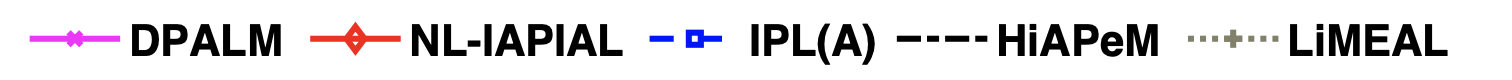} 
  \end{center}
  \caption{Violation to primal feasibility (PF) and dual feasibility (DF) conditions after each outer iteration vs. Number of gradient evaluations by the proposed DPALM, the HiAPeM method in~\cite{li2021augmented}, the LiMEAL in \cite{zeng2022moreau}, and the NL-IAPIAL method in~\cite{kong2022iteration} on solving instances of~\eqref{problem: qp} with different weak convexity constant $\rho\in \{0.1, 1, 10\}$.} \label{figure: qpfig}
  \end{figure}

\textbf{Evolution of penalty parameter.} 
In Fig.~\ref{figure: betacompare}, we visualize the evolution of the penalty parameter,~$\beta$, for DPALM and NL-IAPIAL applied to randomly generated LCQP instances for different values of $\rho$. DPALM uses the best $\beta_0$ that is selected from a set of values described above, while we run NL-IAPIAL with its own best $\beta_0$ and the $\beta_0$ that is used by DPALM. We see that DPALM increases $\beta$ slowly as designed. With a good-tuned $\beta_0$, NL-IAPIAL can achieve the desired solution without increasing $\beta$, while it does need to increase the value of $\beta$ a few times for some other values of $\beta_0$. 
\begin{figure}[ht]
  \begin{center}
      \begin{tabular}{ccc}
            $\rho = 0.1$ & $\rho = 1$ & $\rho = 10$   \\
           \includegraphics[width= 0.29\textwidth]{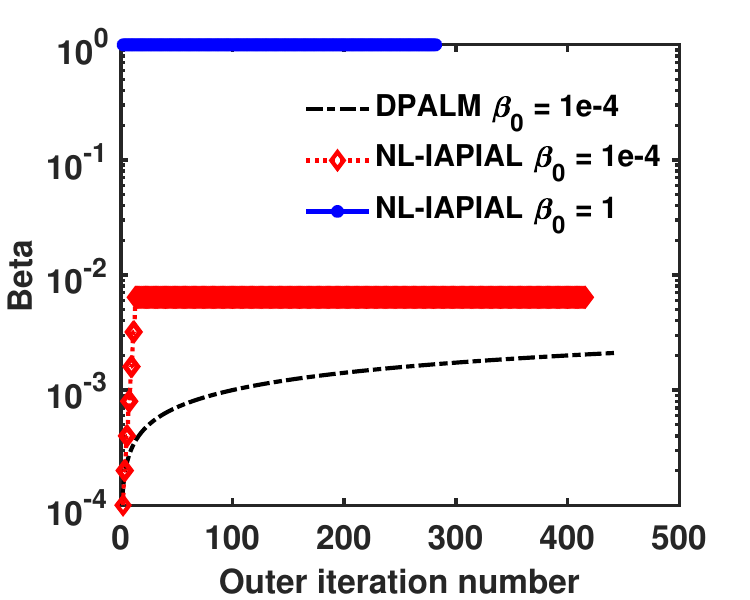} &
          \includegraphics[width= 0.29\textwidth]{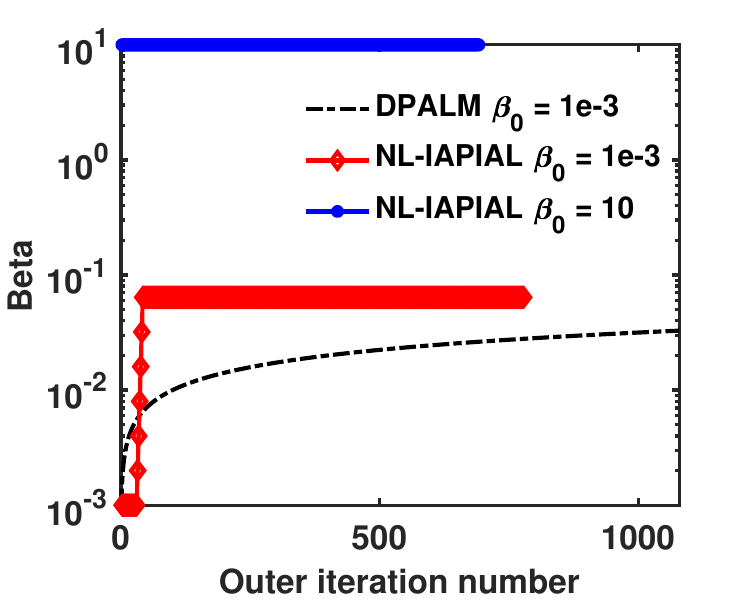} & 
          \includegraphics[width= 0.29\textwidth]{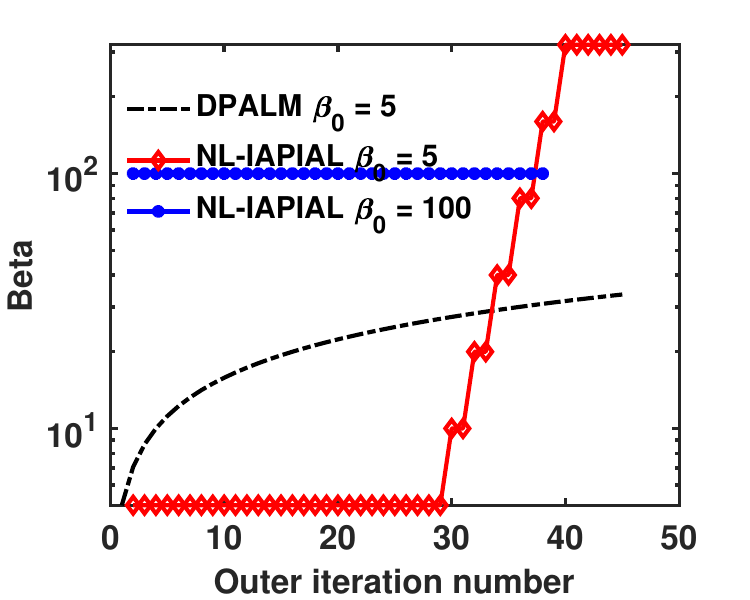}
      \end{tabular}
  \end{center}
  \caption{Evolution of the $\beta$-values for DPALM and NL-IAPIAL \cite{kong2022iteration} on LCQP instances after each outer iteration.} \label{figure: betacompare}
  \end{figure}

{\textbf{Comparison to specialized LCQP solvers.} 
We also compare the proposed method DPALM with a specialized  LCQP solver based on an active-set method~\cite{wong2011active} and MATLAB's built-in solver quadprog. For the active-set method, we follow the implementation as in GitHub\footnote{The PYTHON code is available from \url{https://github.com/JimVaranelli/ActiveSet/tree/master}. For fair {comparison}, we implement it in MATLAB.}. We run 10 independent instances for each value of
$\rho$ in $\{0.1, 1, 10 \}$ and also strongly-convex instances. To ensure a fair comparison, we project the  final iterates obtained by both methods onto the feasible region 
before computing and reporting the objective values at the projected feasible points. In Table~\ref{tab:sqp}, we report the average and variance of the final objective values and computational time in all runs, as well as those by MATLAB's solver quadprog.  We observe that for strongly-convex LCQP instances, both the active set method and our method DPALM achieve globally optimal solutions while the former takes significantly longer time; for weakly convex instances, DPALM produce better-quality solutions, in terms of the objective value, than the active set method and MATLAB's solver quadprog. 
\begin{table}[ht]
    \centering
    \begin{tabular}{|c|cc|cc|cc|}\hline
        & obj. & time & obj. & time & obj. & time \\\hline
        & \multicolumn{2}{|c|}{Active set method in \cite{wong2011active}} & \multicolumn{2}{|c|}{DPALM} & \multicolumn{2}{|c|}{MATLAB's quadprog}\\\hline
        \multicolumn{7}{|c|}{strong convexity constant $\mu=10^{-3}$}\\\hline
        avg. & -2.55e3 & 75.71 & -2.55e3 & 0.90 & -2.55e3 & 0.17 \\
        var.& 1.46e4 & 82.43 & 1.46e4 & 0.02 & 1.46e4 & 6.39e-4 \\\hline
        \multicolumn{7}{|c|}{weak convexity constant $\rho=0.1$}\\\hline
        avg. & 1.39e3 & 29.22 & -3.49e3 & 5.84 & -2.89e3 & 2.65 \\
        var.& 1.14e4 & 14.17 & 1.33e4 & 6.81 & 6.66e4 & 0.1030 \\\hline
        \multicolumn{7}{|c|}{weak convexity constant $\rho=1$}\\\hline
        avg. & 218.13 & 2.31 & -1.33e4 & 8.78 & 7.73e3 & 0.03 \\
        var.& 3.09e3 & 1.38 & 2.26e4 & 7.18 & 1.30e8 & 5.26e-5 \\\hline
        \multicolumn{7}{|c|}{weak convexity constant $\rho=10$}\\\hline
        avg. & 58.50 & 0.14 & -1.17e5 & 6.63 & -1.08e5 & 0.03 \\
        var.& 1.49e3 & 0.009 & 4.04e5 & 4.10 & 9.48e10 & 1.25e-5 \\\hline
    \end{tabular}
    \caption{Comparisons of the objective values at feasible points and running time (in second) by the proposed method DAPLM, the active set method in~\cite{wong2011active}, and MATLAB's solver quadprog.}
    \label{tab:sqp}
\end{table} 
}  
  
		\subsection{Non-convex Quadratically-Constrained Quadratic Program (QCQP)}\label{subsec: QCQP}
		In this subsection, we compare the proposed DPALM method in Alg.~\ref{alg:ialm} to HiAPeM  in~\cite{li2021augmented}, and NL-IAPIAL {and its variant IPL(A)} in~\cite{kong2022iteration} on solving non-convex instances of QCQP in the form of 
		\begin{align}\label{problem: qcqp} 
			\min_{\vx\in \mathbb{R}^d} \frac{1}{2}\vx^{\top}\vQ_0\vx + \vc_0^{\top}\vx, \notag &\st \frac{1}{2}\vx^{\top}\vQ_j\vx + \vc_j^{\top}\vx \le \gamma_j, \\  &\text{ for all }  j \in [m]; x_i \in [l_i, u_i] = [-5,5], i \in [d].
		\end{align}

In the experiment, we set $m =10$ and $d = 1000$.		We 
  vary the weak convexity constant $\rho\in \{0.1, 1, 10\}$ and for each value of $\rho$, we generate 10 instances independently at random, with the data $\{\vQ_j, \vc_j\}_{j=0}^m$ and $\{\gamma_j\}_{j=1}^m$ generated in the way as described in Sect.~\ref{sec:data-gen}. Each algorithm starts from the strict feasible point $\vx^0=\vzero$ and is terminated if an $\varepsilon$-KKT solution is found or after $10^4$ outer iterations, where we set $\varepsilon=10^{-3}$. 
  For DPALM and NL-IAPIAL, we pick the best $\beta_0$ from $\{10^{-4}, 10^{-3}, 10^{-2}, 10^{-1}, 1, 10, 10^2\}$, resulting in $\beta_0 = 10^{-4}$ for the former, $\beta_0 = 10^{-4}$ for $\rho = \{0.1, 1\}$ and $\beta_0 = 10^{-3}$ for $\rho = 10$ for the latter. The setting of HiAPeM is the same as that in the previous test with $\beta_0 = 0.01$ and $N_0 = 10^4$, {and the parameter setting for IPL(A) is the same as that in Section~\ref{subsec: QP}.}

  \begin{figure}
  \begin{center}
      \begin{tabular}{ccc}
          $\rho = 0.1$ & $\rho = 1$ & $\rho = 10$   \\
          \includegraphics[width= 0.29\textwidth]{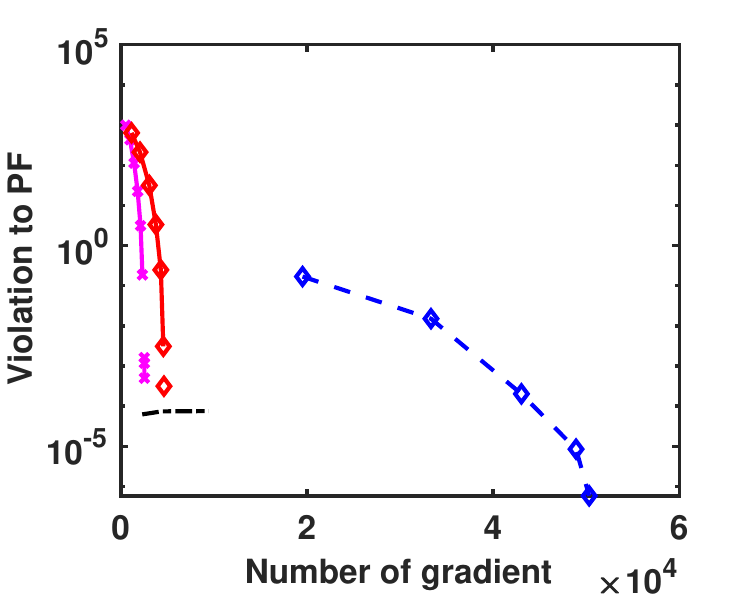} & \includegraphics[width= 0.29\textwidth]{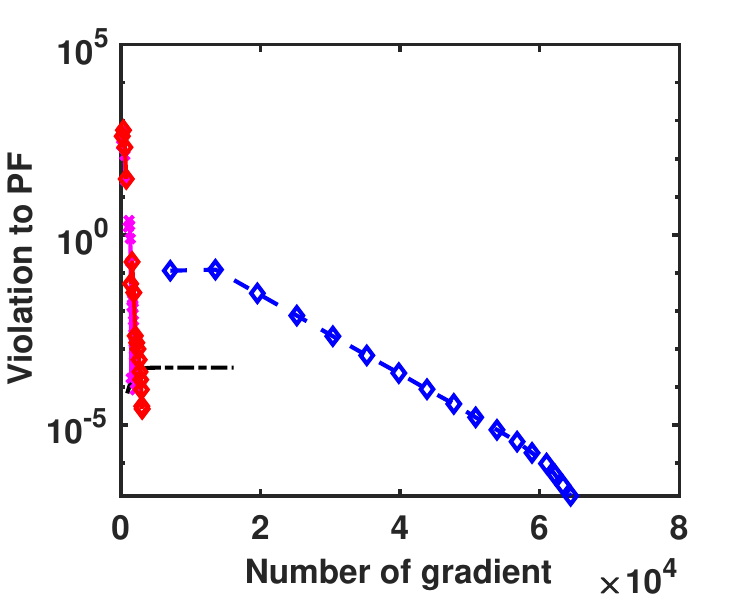} & 
          \includegraphics[width= 0.29\textwidth]{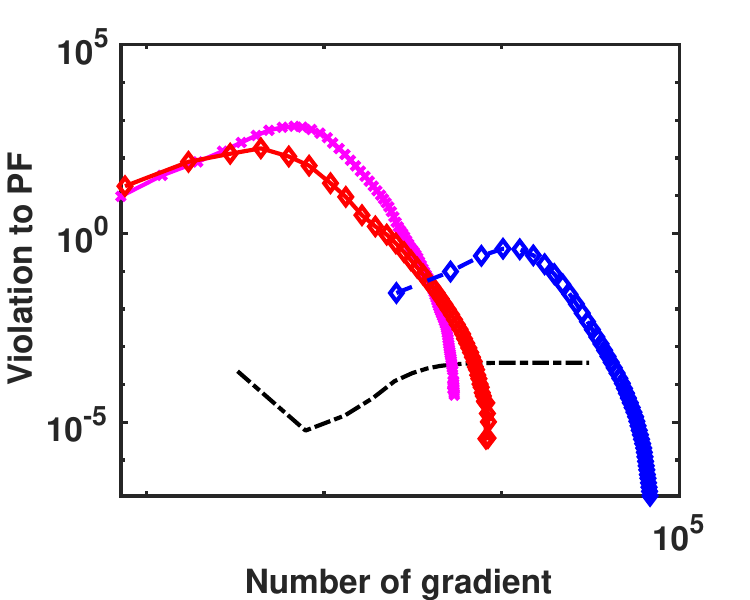}\\
          \includegraphics[width= 0.29\textwidth]{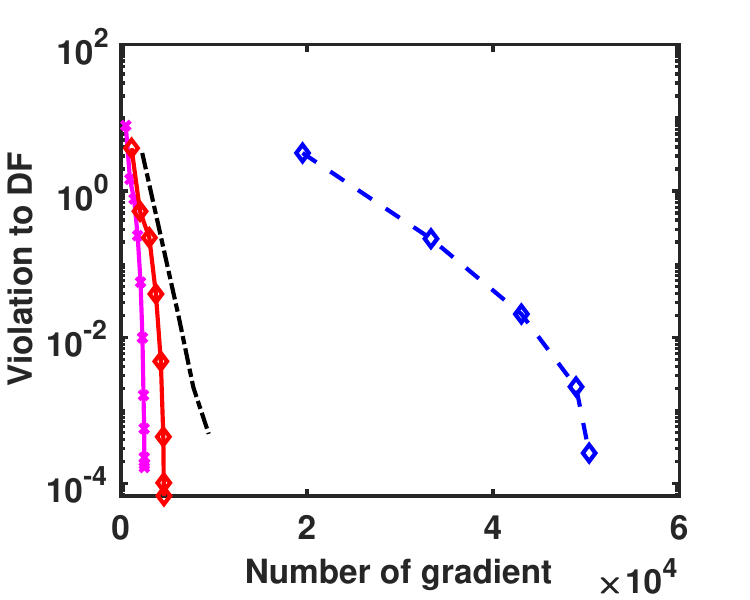} &
          \includegraphics[width= 0.29\textwidth]{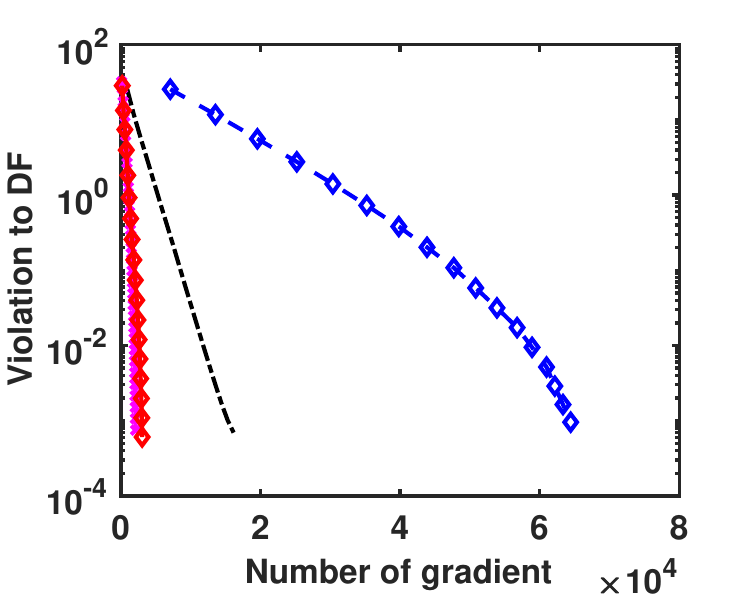} &
          \includegraphics[width= 0.29\textwidth]{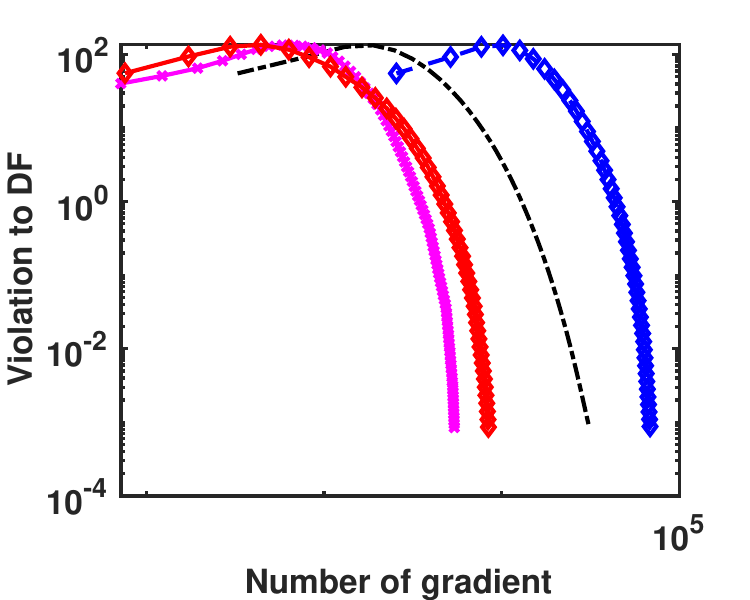} \\
          \includegraphics[width= 0.29\textwidth]{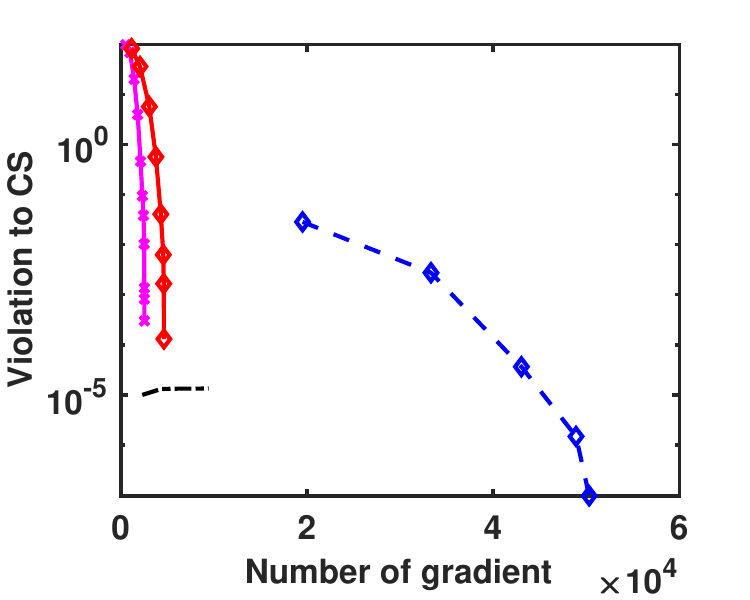} &
          \includegraphics[width= 0.29\textwidth]{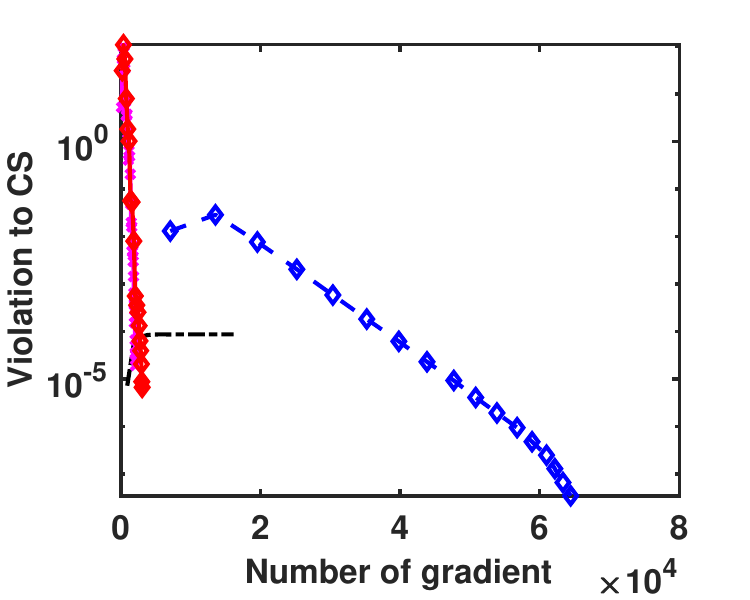} &
          \includegraphics[width= 0.29\textwidth]{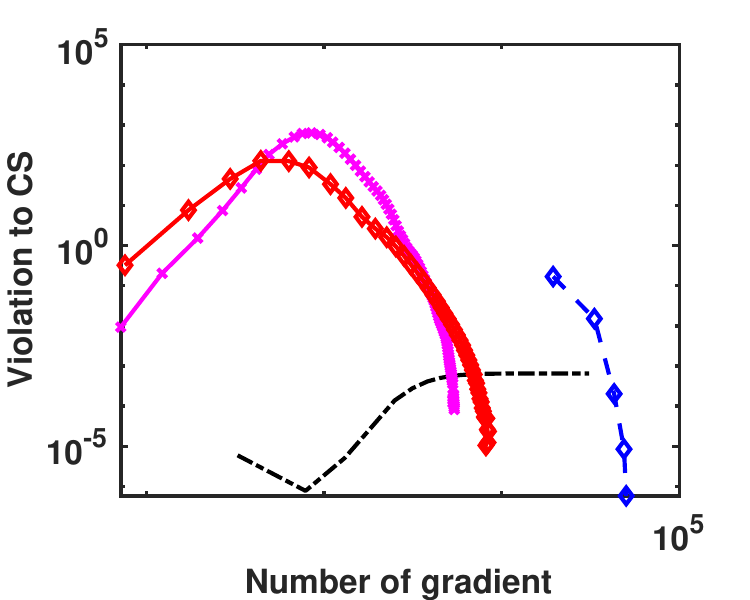} \\
      \end{tabular}
  \includegraphics[width=0.4\textwidth]{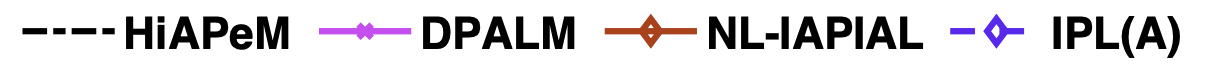}    
  \end{center}
  \caption{Violation to the primal feasibility (PF), dual feasibility (DF), and complementary slackness (CS) conditions vs. Number of gradient evaluations by the proposed DPALM, the HiAPeM method in~\cite{li2021augmented}, and NLIAPIAL and IPL(A) method in~\cite{kong2022iteration} on solving instances of~\eqref{problem: qcqp} with weak convexity constant $\rho\in \{0.1, 1, 10\}$.} \label{table: qcqpfig}
  \end{figure}

  In addition to \verb|pres|, \verb|dres|, \verb|time|, and \verb|#Grad|, we also report violation to CS condition, shortened as \verb|compslack|. Average results with variance are shown in Table~\ref{table:qcqp}. We observe that the proposed DPALM performs the best for all instances. In addition, we plot the results in Fig.~\ref{table: qcqpfig} for the first random instance for each $\rho$.
From the results, we see that to produce a near-KKT point at the same accuracy, our algorithm takes fewer gradient evaluations and less time than all other compared methods. 
This advantage becomes more significant as~$\rho$ increases. 

  \begin{landscape}
\begin{table}
		\vspace{4cm}
				\resizebox{1.5\textwidth}{!}{ 
					\begin{tabular}{|c||ccccc|ccccc|ccccc|ccccc|}
						\hline
						& pres & dres & compslack & time & \#Grad & pres & dres & compslack & time & \#Grad & pres & dres & compslack & time & \#Grad & pres & dres & compslack & time & \#Grad\\\hline\hline
						&\multicolumn{5}{|c|}{HiAPeM with $N_0=10^4$} & \multicolumn{5}{|c|}{NL-IAPIAL} & \multicolumn{5}{|c|}{IPL(A)} & \multicolumn{5}{|c|}{DPALM (proposed)}\\\hline
						\multicolumn{21}{|c|}{weak convexity constant: $\rho=0.1$}\\\hline
						avg. & 8.27e-5 & 5.05e-4 & 1.47e-5& 53.02 & 9642 & 1.40e-4 & 5.78e-5 & 1.94e-4 & 16.3 & 4475 &       5.37e-7 & 2.82e-4 & 8.73e-8 & 208.74 & 51201 & 2.85e-4 & 1.54e-4 & 2.32e-4 & 9.16 & \textbf{2435}\\\hline
                            var. & 1.78e-10 & 9.94e-10 & 8.95e-12 & 3.27 & 15791 & 4.60e-8 & 1.62e-10 & 9.38e-9 & 6.23 & 5.64e5 & 6.13e-15 & 7.92e-10 & 1.57e-16 & 65.92 & 7.68e5 & 5.52e-8 & 6.52e-9 & 2.42e-8 & 1.00 & 6.67e4\\\hline
						\multicolumn{21}{|c|}{weak convexity constant: $\rho=1.0$}\\\hline
						avg. & 3.34e-4 & 8.00e-4 & 9.04e-5& 86.44 & 16225 & 4.34e-5 & 6.81e-4 & 1.13e-5 & 11.51 & 3106 & 1.73e-7 & 8.29e-4 & 4.00e-8 & 260.43 & 65541 & 1.07e-4 & 6.82e-4 & 2.64e-5 & 8.67 & \textbf{2225}\\\hline
                             var. & 4.06e-9 & 1.19e-8 & 3.92e-10 & 7.57 & 1.51e5 & 2.76e-10 & 8.64e-09 & 2.21e-11 & 0.44 & 3.41e4 & 3.01e-15 & 1.88e-8 & 1.52e-16 & 418.77 & 1.01e7  & 1.44e-9 & 1.07e-8 & 7.80e-11 & 0.23 & 1.93e4\\\hline
						\multicolumn{21}{|c|}{weak convexity constant: $\rho=10$}\\\hline
						avg. & 3.81e-4 & 9.05e-4 & 6.60e-4 & 176.28 & 30462 & 1.12e-5 & 8.64e-4 & 2.03e-5 & 21.08 & 8172 & 8.57e-8 & 8.80e-4 & 1.29e-7 & 274.37 & 67090 & 3.91e-5 & 8.45e-4 & 6.09e-5 & 21.08 & \textbf{5173}\\\hline
                        var. & 1.75e-9 & 2.17e-9 & 5.71e-9 & 49.64 & 5.09e5 & 6.70e-11 & 4.00e-9 & 2.13e-10 & 2.33 & 4.39e4 & 5.77e-16 & 3.99e-9 & 1.22e-15 & 68.32 & 2.67e6 & 7.35e-11 & 4.89e-10 & 1.71e-10 & 0.84 & 3.66e4\\\hline
					\end{tabular}
				} \caption{Results by the proposed algorithm DPALM, the HiAPeM method in~\cite{li2021augmented} with $N_0 = 10^4$, and NL-IAPIAL and IPL(A) method \\ in~\cite{kong2022iteration} on solving instances of $\rho$-weakly convex QCQP~\eqref{problem: qcqp} of size $m=10$ and $d=1000$, where $\rho \in\{0.1,1.0,10\}$.}\label{table:qcqp}
		\end{table}
\end{landscape}

		\subsection{Linear Constrained Robust Nonlinear Least Square}\label{subsec: nllsq}
In this subsection, we compare the proposed DPALM method to the inexact Prox-Linear method~\cite[Alg.~2]{drusvyatskiy2019efficiency} on solving a linearly constrained robust nonlinear least square:
		\begin{equation}\label{problem:l-rnls}
			\min_{\vx \in \mathbb{R}^d} \|\vf(\vx)\|_1, \st \vA\vx = \vb, x_i \in [l_i, u_i]=[-5,5], \text{ for all }  i \in [d],
		\end{equation}
		where $\vf:\RR^d \to \RR^m$ is a smooth mapping, and $\vA\in\RR^{n\times d}$. 	
  To apply the method in \cite{drusvyatskiy2019efficiency}, we reformulate the problem in~\eqref{problem:l-rnls} to 
			$\min_{\vx\in [\vl,\vu]}\widehat{h}(\widehat{\vf}(\vx))$,
		where $\vl = [l_1, l_2, \dots, l_d], \vu = [u_1, u_2, \dots, u_d]$, $\widehat{\vf}(\vx) = \left(\begin{matrix} \vf(\vx) \\ \vA\vx-\vb 
		\end{matrix} \right)$, and $\widehat{h}: \RR^{m+n} \to \RR$ is defined as $\widehat{h}\left(\begin{matrix} \vy_1 \\ \vy_2 
		\end{matrix} \right) = \|\vy_1\|_1 + \delta_{\{\mathbf{0}\}}(\vy_2) 
  $ for any $\vy_1 \in \mathbb{R}^m, \vy_2 \in \mathbb{R}^n 
  $. Then we smooth $\widehat{h}$ by its Moreau envelope $\widehat{h}_{\nu}$ for a small $\nu>0$.

  In the experiment, let $\vf(\vx) = (f_1, f_2, \dots, f_m)$ with $f_i(\vx) = \frac{1}{2}\vx^\top \vQ_i\vx + \vc_i^\top \vx$ in~\eqref{problem:l-rnls}.  
  The weak convexity constant of the objective is $\rho= M_lL_f$, where $M_l= \sqrt{m}$ is the Lipschitz constant of $\|\cdot\|_1$ and $L_f$ is the smoothness constant of $\vf(\cdot)$. We set 
  $m=n=10$ and $d=1000$ and generate one random instance with the data $\{\vA, \vb\}$ and $\{\vQ_i, \vc_i\}_{i=1}^m$ generated in the way as described in Sect.~\ref{sec:data-gen}.
  
 From~\eqref{eq:def-x-hat-+},~\eqref{eq:importantine},~\eqref{eq:g2}, and the $\rho$-strong convexity of each subproblem, one can show $$\operatorname{dist}(\vzero , \partial_\vx {\cL}_{\beta_k}( \widetilde{\vx}^{k}; \vy^k,\vz^k)) \leq 4\rho \|\vx^k - \vx^{k-1}\| + 4\sqrt{\frac{\nu_k\rho}{2}} +\frac{\sqrt{2}\varepsilon_k}{\rho},$$ where $\varepsilon_k$ is the error tolerance for solving the $k$-th subproblem. Hence, when $4\sqrt{\frac{\nu_k\rho}{2}} +\frac{\sqrt{2}\varepsilon_k}{\rho}$ is small, we can use $\rho \|\vx^k - \vx^{k-1}\|$ as a measure of stationarity of the iterates. For both methods, we use a small constant smoothing parameter $\nu=10^{-3}$ and $\varepsilon_k  = 10^{-3}$ for each $k$. For DPALM, we simply set $\beta_0 = 1$. They are terminated once $\max\{\|\vA\vx^k-\vb\|, \rho \|\vx^k - \vx^{k-1}\|\} \le \varepsilon = 10^{-2}$ for some $k$. 
  In Fig.~\ref{fig:fairness}, we plot 
the  violation to
   PF and the violation to DF measured by $\rho\|\vx^k - \vx^{k-1}\|$ versus the number of gradient evaluations. It shows that DPALM takes fewer gradient evaluations than the Prox-linear method to reach the same accuracy, 
  though both methods have the same order of oracle complexity in theory. 

  \subsection{Classification with ROC-based fairness}\label{subsec: fairnessROC}
  In this subsection, we compare the proposed DPALM method to the inexact Prox-Linear method~\cite{drusvyatskiy2019efficiency} on solving classification with ROC-based fairness \cite{vogel2021learning}. 
  More specifically, we solve the problem in~\cite[Experiment 6.1]{huang2023single} with a fixed threshold $\theta$. Let $\cP \cup \cU \cup \cD$ be the training data set, where $\cP$, $\cU$, and $\cD$ respectively denote the protected group data, unprotected group data, and other labeled data. 
  We can then learn a fairness model parameterized by $\vx$ via solving 
  \begin{equation}\label{exp4: main}
\begin{aligned}
      \min_{\vx\in \cX} \,\,\,&\ R(\vx):=   \left| \frac{1}{|\cP|} \sum_{(\va, b)\in \cP} \sigma(\vx^\top\va - \theta) - \frac{1}{|\cU|} \sum_{(\va, b)\in \cU} \sigma(\vx^\top\va - \theta) \right| \\
      \st & L(\vx; \cP\cup\cU):=   \frac{1}{2|\cP\cup\cU|}\sum_{(\va, b)\in \cP\cup\cU}(\vx^\top\va - b)^2 \leq \min_{\vx\in \cX}L(\vx; \cD) + \gamma,
  \end{aligned}
  \end{equation}
  where $|\cP|$ denotes the cardinality of $\cP$, $\gamma>0$ is a slackness parameter, $\cX$ is a compact set, and $\sigma(y) = \frac{\exp(y)}{1+\exp(y)}$. Each data sample $(\va, b)$ contains a feature vector $\va$ and a label $b \in \{-1, 1 \}$. The objective of~\eqref{exp4: main} measures the discrepancy between the protected and unprotected groups and thus minimizing it encourages the fairness, while the constraint will maintain a good classification accuracy. 
  
  We use \textit{a9a}~\cite{kohavi1996scaling} and  \textit{COMPAS}~\cite{angwin2022machine} datasets in our experiment, where \textit{a9a} has 32,561 samples of 123 features and \textit{COMPAS} 6,172 samples of 11 features. For each dataset, we randomly select about 1/3 of the samples as $\cD$ and the rest as $\cP\cup \cU$.   The groups are based on gender (male vs. female) in \textit{a9a} and race (African-American vs. non-African-American) in \textit{COMPAS}. We set\footnote{A small radius of the $\infty$-norm ball is used here because the minimizer of $L(\,\cdot\,; \cD)$ is small and the ball constraint will not be active if a large radius is set.} $\cX = \{\vx: \|\vx\|_\infty \le 0.1\}$ and let $\vx_{\text{mat}} = \argmin_{\vx\in \cX} L(\vx; \cD)$. Then, we fix \mbox{$\theta = \frac{1}{|\cD|} \sum_{(\va, b) \in \cD} \va^\top \vx_{\text{mat}}$} and $\gamma = 2L(\vx_{\text{mat}}; \cD)$. We remark here that $\gamma$ is a small positive number, respectively 0.07 for \textit{a9a} data and 0.08 for \textit{COMPAS}.

  \begin{figure}[htbp!]\label{fig : Exp3and4}
			\begin{center}
            \begin{tabular}{ccc}
            {\small{Problem}~\eqref{problem:l-rnls}} &  {\small Problem~\eqref{exp4: main} with \text{a9a}} &  {\small \eqref{exp4: main} with \text{COMPAS}}\\
            \includegraphics[width= 0.28\textwidth]{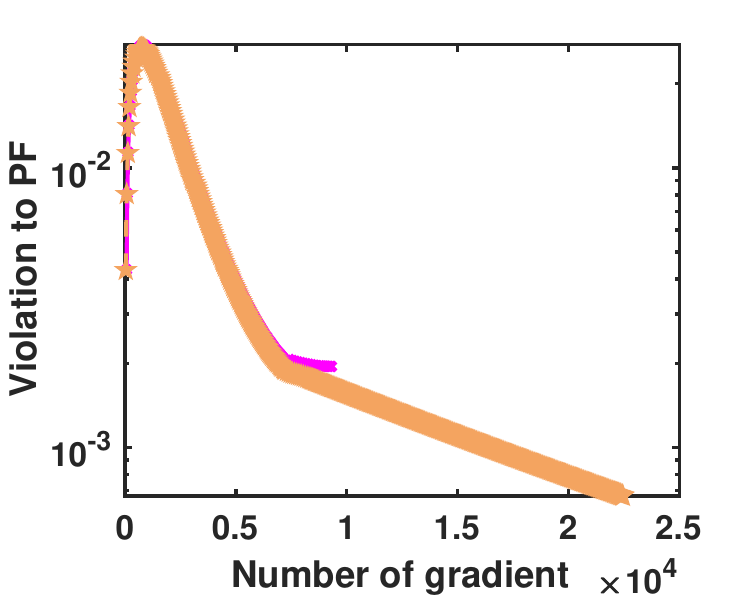}~~ &
				\includegraphics[width= 0.28\textwidth]{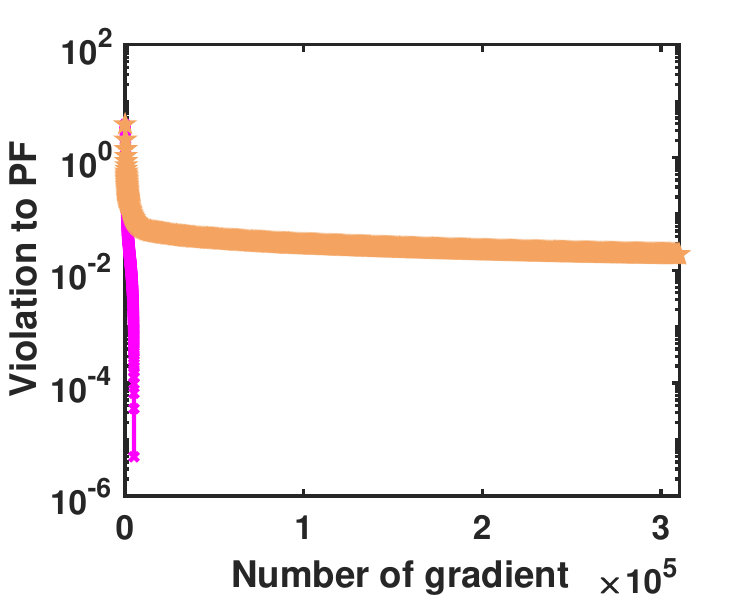} &
                \includegraphics[width= 0.28\textwidth]
                {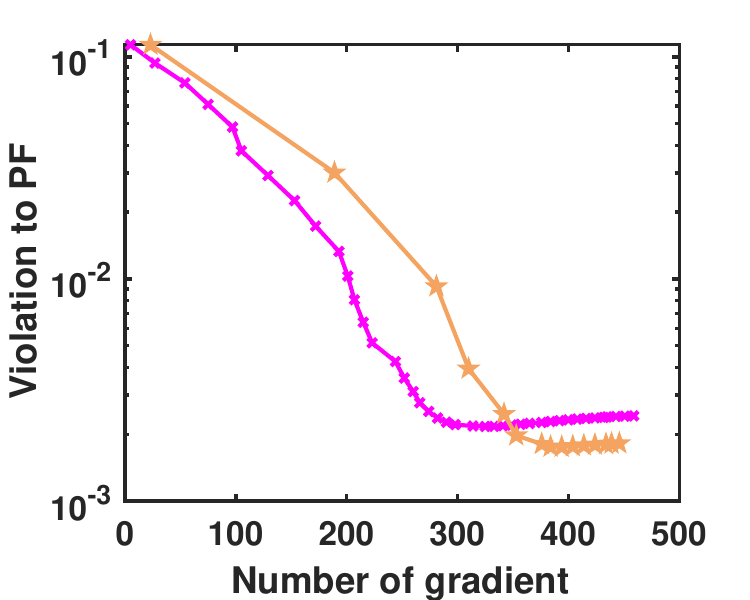}\\
                \includegraphics[width= 0.28\textwidth]{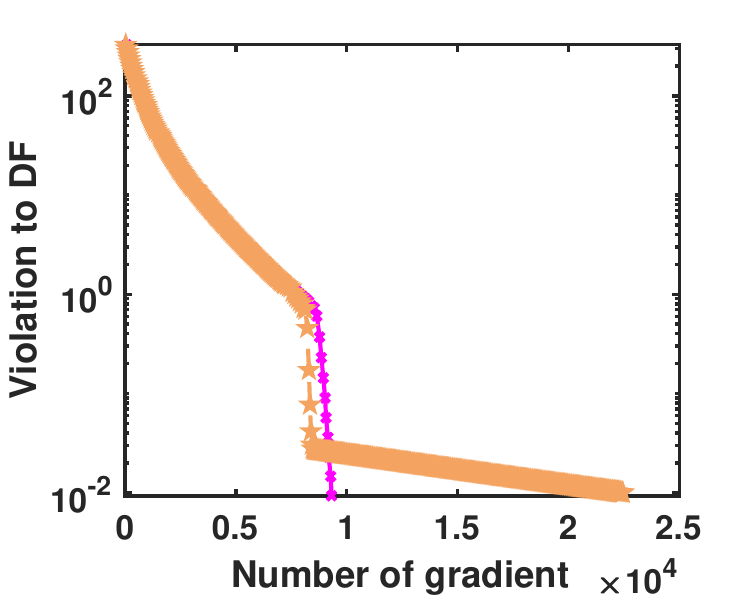} &
                \includegraphics[width= 0.28\textwidth]{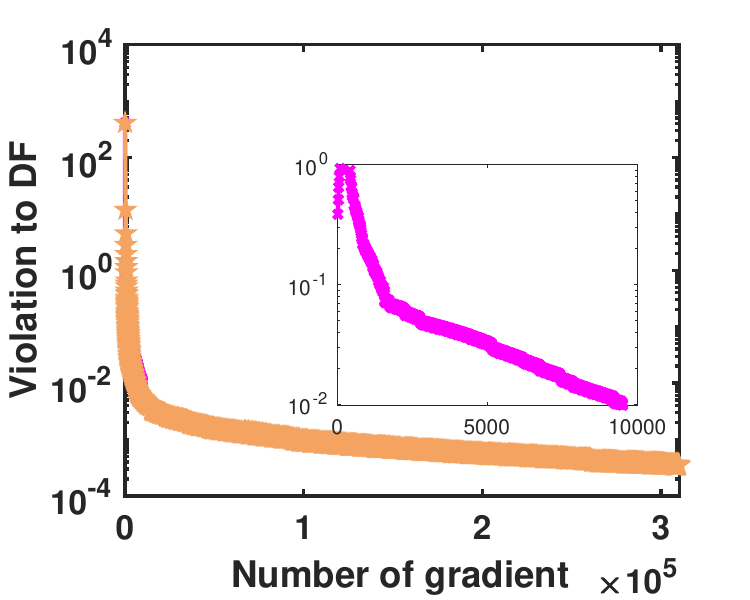} &
                \includegraphics[width= 0.28\textwidth]{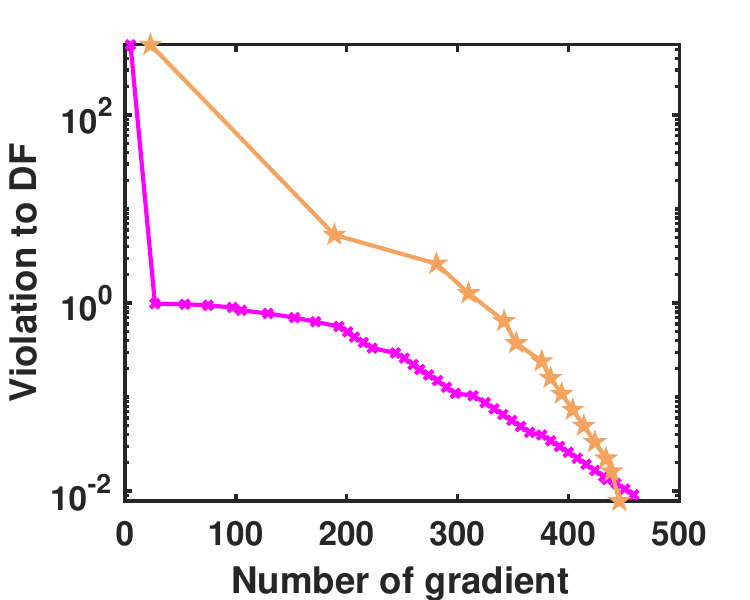}\\
    \end{tabular}
    \includegraphics[width= 0.29\textwidth]{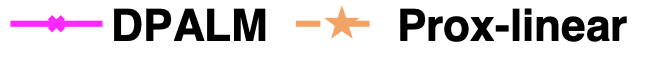}\\
	\end{center}
   \caption{Violation to the PF and DF after each outer iteration vs. Number of gradient evaluations by the proposed DPALM and the Prox-Linear method in \cite{drusvyatskiy2019efficiency} on solving an instance of~\eqref{problem:l-rnls}, and on solving an instance of\eqref{exp4: main} with \textit{a9a}~\cite{kohavi1996scaling} and COMPAS~\cite{angwin2022machine} datasets.}
   \label{fig:fairness}
\end{figure}

For both compared methods, we start from $\vx^0 = \mathbf{1}$ and terminate them once an $\varepsilon$-KKT solution is produced with $\varepsilon = 10^{-2}$. 
We use $\nu = 0.1$ to smooth the absolute function in the objective for both methods and $\nu_c = 10^{-3}$ in smoothing the indicator function of the constraint for the Prox-linear method for it to achieve a small violation to PF. In Fig.~\ref{fig:fairness}, we plot the violation to the PF and DF conditions versus the number of gradients for both methods. 
The results clearly show the higher efficiency of DPALM over Prox-linear in terms of total gradient computation to reach the same level of accuracy.

\section{Conclusions}\label{sec:conclusion}
We propose a damped proximal augmented Lagrangian method (DPALM) for solving a non-convex non-smooth optimization problem, which has a weakly-convex objective and convex affine/nonlinear constraints. We show that DPALM can produce a (near) $\vareps$-KKT point under Slater's condition by solving $\mathcal{O}(\vareps^{-2})$ strongly-convex subproblems, each to a desired accuracy. Also, we have established the overall iteration complexity of DPALM for two cases where~$f$ is smooth or a convex function composed with a smooth mapping.  For the smooth case, with an APG applied to each subproblem, DPALM achieves an $\widetilde \cO(\vareps^{-2.5})$ complexity result to produce an $\vareps$-KKT point, which improves an existing $\widetilde \cO(\vareps^{-3})$ result for proximal ALM based method and matches with the best-known result by quadratic penalty based methods. For the compositional case, with an APG applied to a Moreau-envelope smoothed subproblem, DPALM achieves a complexity result of $\widetilde \cO(\vareps^{-3})$ to produce a near $\vareps$-KKT point, which is new for solving functional constrained compositional problems. 

		
\section*{Acknowledgements} The authors would like to thank three anonymous reviewers and one technical editor for their valuable comments to improve the paper. This work is partly supported by ONR grant N00014-22-1-2573 and NSF grant DMS-2053493.

\section*{Conflict of interest}
The authors declare that they have no conflict of interest.
		
\appendix 
	
\section{Proof of Lemma~\ref{lem:boundxz}} \label{Appendix: multipbound}
In this section, for simplicity of notations, we let $$\widehat{ \vy}^{k+1} := \vy^k + \beta_k\left(\vA \widehat{\vx}^{k+1}-\vb\right),\,\,\widehat{ \vz}^{k+1} :=\left[ \vz^k+\beta_k \vg\left( \widehat{\vx}^{k+1}\right)\right]_+,$$ $$\vp^k = (\vy^k, \vz^k), \text{ and } \widehat{\vp}^{k+1}= (\widehat{\vy}^{k+1}, \widehat{\vz}^{k+1}).$$ 
We first prove the next lemma.
        \begin{lemma}\label{lem: appendbeforeinduction}
        Under Assumptions of Lemma~\ref{lem:boundxz} and with the defined $\overline{B}$ and $\tau$,        it holds
        \begin{align}\label{eq:appendbeforeinduction}
             \frac{1}{\beta_k}\| \widehat{\vp}^{k+1}\|^2 + \min\left\{ \tau \sqrt{\lambda_{\min}^+(\vA\vA^\top)} , \frac{g_{\min}}{2} \right\} \|\widehat{\vp}^{k+1}\| \leq \overline{B} + \frac{1}{\beta_k} \langle \widehat{\vp}^{k+1}, \vp^k \rangle.
        \end{align}   
        \end{lemma}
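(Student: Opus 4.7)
My plan is to exploit the near-stationarity of $\widehat{\vx}^{k+1}$ (condition \eqref{eq:approx-cond}) for the subproblem $\min_\vx \overline{\cL}_{\beta_k}(\vx;\vy^k,\vz^k)$, and then test the corresponding subgradient against a carefully chosen point of the form $\vu = \vx_{\mathrm{feas}} - \tau \vd$, where $\vd$ is a unit vector aligned with $\vA^\top\widehat{\vy}^{k+1}$. The goal is to simultaneously activate the Slater slack in the linear constraints (producing the $\tau\sqrt{\lambda_{\min}^+(\vA\vA^\top)}$ coefficient) and in the nonlinear constraints (producing the $g_{\min}/2$ coefficient).

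First I would write the near-stationarity inclusion as
$\vxi = \vxi_f + \vxi_h + \vA^\top \widehat{\vy}^{k+1} + J_\vg(\widehat{\vx}^{k+1})^\top \widehat{\vz}^{k+1} + 2\rho(\widehat{\vx}^{k+1}-\vx^k)$
with $\vxi_f \in \partial \overline{f}(\widehat{\vx}^{k+1})$, $\vxi_h \in \partial h(\widehat{\vx}^{k+1})$, $\|\vxi\|\le \vareps_k \le 1$, and decompose $\vxi_h = \vxi_\cN + \vxi_r$ via Assumption~\ref{Assump5}. I would then test against $\vu := \vx_{\mathrm{feas}} - \tau\vd$ with $\vd := \vA^\top\widehat{\vy}^{k+1}/\|\vA^\top\widehat{\vy}^{k+1}\|$ (and take $\vd = \mathbf{0}$ if $\widehat{\vy}^{k+1}=\vzero$). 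Since $\tau \le \dist(\vx_{\mathrm{feas}},\mathrm{bd}(\cX))$, we have $\vu\in\cX$, so $\langle \vxi_\cN,\widehat{\vx}^{k+1}-\vu\rangle \ge 0$.

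Next I would lower-bound the two cross terms. For the linear part, $\vA\vu-\vb = -\tau\vA\vA^\top\widehat{\vy}^{k+1}/\|\vA^\top\widehat{\vy}^{k+1}\|$, which after using $\vA\widehat{\vx}^{k+1}-\vb = (\widehat{\vy}^{k+1}-\vy^k)/\beta_k$ gives
$\langle \vA^\top\widehat{\vy}^{k+1},\widehat{\vx}^{k+1}-\vu\rangle = \tfrac{1}{\beta_k}(\|\widehat{\vy}^{k+1}\|^2 - \langle\widehat{\vy}^{k+1},\vy^k\rangle) + \tau\|\vA^\top\widehat{\vy}^{k+1}\|$.
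The crucial step is that $\vy^0\in\mathrm{Range}(\vA)$ together with the update rule inductively forces $\widehat{\vy}^{k+1}\in\mathrm{Range}(\vA\vA^\top)$, so $\|\vA^\top\widehat{\vy}^{k+1}\|\ge \sqrt{\lambda_{\min}^+(\vA\vA^\top)}\,\|\widehat{\vy}^{k+1}\|$. For the nonlinear part, convexity of each $g_i$ gives $\nabla g_i(\widehat{\vx}^{k+1})^\top(\widehat{\vx}^{k+1}-\vu) \ge g_i(\widehat{\vx}^{k+1}) - g_i(\vu)$, and the identity $\widehat{z}_i^{k+1}g_i(\widehat{\vx}^{k+1}) = \widehat{z}_i^{k+1}(\widehat{z}_i^{k+1}-z_i^k)/\beta_k$ (case analysis on the sign of $z_i^k+\beta_k g_i$) yields $\tfrac{1}{\beta_k}(\|\widehat{\vz}^{k+1}\|^2 - \langle\widehat{\vz}^{k+1},\vz^k\rangle)$ plus the slack $-\sum_i \widehat{z}_i^{k+1} g_i(\vu)$. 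The Lipschitz bound $g_i(\vu)\le g_i(\vx_{\mathrm{feas}}) + \tau B_g \le -g_{\min} + g_{\min}/(2\sqrt{m})$ combined with $\sum_i \widehat{z}_i^{k+1} = \|\widehat{\vz}^{k+1}\|_1 \ge \|\widehat{\vz}^{k+1}\|$ then gives a $(g_{\min}/2)\|\widehat{\vz}^{k+1}\|$ lower bound.

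Finally, I would upper-bound $\langle \vxi - \vxi_f - \vxi_h - 2\rho(\widehat{\vx}^{k+1}-\vx^k),\widehat{\vx}^{k+1}-\vu\rangle$ by splitting $\widehat{\vx}^{k+1}-\vu = (\widehat{\vx}^{k+1}-\vx_{\mathrm{feas}}) + \tau\vd$, applying Cauchy--Schwarz with $\|\widehat{\vx}^{k+1}-\vx_{\mathrm{feas}}\| \le D$ on the first piece and $\|\tau\vd\|=\tau$ on the second, and using $\|\vxi\|\le 1$, $\|\vxi_f\|\le \overline{B}_f$, $\|\vxi_r\|\le r_h$, and $\|\widehat{\vx}^{k+1}-\vx^k\|\le D$. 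This produces exactly the constant $\overline{B} = D(r_h+\overline{B}_f)+2\rho D^2+D+\tau(1+r_h+\overline{B}_f+2\rho D)$. Combining with the two lower bounds, using $\|\widehat{\vy}^{k+1}\|+\|\widehat{\vz}^{k+1}\|\ge\|\widehat{\vp}^{k+1}\|$ to pass to the minimum coefficient, and absorbing the cross terms into $\tfrac{1}{\beta_k}\langle\widehat{\vp}^{k+1},\vp^k\rangle$ yields~\eqref{eq:appendbeforeinduction}. The main subtlety is verifying $\widehat{\vy}^{k+1}\in\mathrm{Range}(\vA)$ and tuning the choice of $\vu$ so that the Slater slacks in the linear and nonlinear constraints show up as \emph{separate} lower bounds whose minimum matches the stated coefficient.
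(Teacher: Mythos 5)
Your proposal is correct and reaches \eqref{eq:appendbeforeinduction} with exactly the stated constants, but it takes a genuinely different route from the paper. The paper tests the stationarity inclusion only at $\vx_{\mathrm{feas}}$ (yielding \eqref{eq: appendmain1}, in which the normal-cone component $\vxi_1^{k+1}$ survives with an a priori unbounded norm), then \emph{separately} bounds $\|\vA^\top\widehat{\vy}^{k+1}\|$ by the triangle inequality applied to the stationarity equation (which reintroduces $\|\vxi_1^{k+1}\|$ on the right of \eqref{eq: appendlambdayy}), and finally adds $\tau$ times the second inequality to the first, cancelling the $+\tau\|\vxi_1^{k+1}\|$ term against the supporting-hyperplane lower bound $(\widehat{\vx}^{k+1}-\vx_{\mathrm{feas}})^\top\vxi_1^{k+1}\ge \dist(\vx_{\mathrm{feas}},\mathrm{bd}(\cX))\|\vxi_1^{k+1}\|$. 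You instead test at the single shifted point $\vu=\vx_{\mathrm{feas}}-\tau\vd$ with $\vd$ the unit vector along $\vA^\top\widehat{\vy}^{k+1}$: since $\tau\le\dist(\vx_{\mathrm{feas}},\mathrm{bd}(\cX))$ keeps $\vu\in\cX$, the normal-cone term can be discarded by sign alone, the coefficient $\tau\|\vA^\top\widehat{\vy}^{k+1}\|\ge\tau\sqrt{\lambda_{\min}^+(\vA\vA^\top)}\|\widehat{\vy}^{k+1}\|$ falls out of the linear cross term directly (using $\widehat{\vy}^{k+1}\in\mathrm{Range}(\vA)$, which holds by the same induction as in the paper since $\vb=\vA\vx_{\mathrm{feas}}\in\mathrm{Range}(\vA)$), and the factor $g_{\min}/2$ comes from $g_i(\vu)\le -g_{\min}+\tau B_g\sqrt{m}/\sqrt{m}\le -g_{\min}/2$ rather than from sacrificing $\tau\sqrt{m}B_g\|\widehat{\vz}^{k+1}\|$ at the combination step. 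Your version buys a cleaner argument — no supporting-hyperplane construction and no need to control $\|\vxi_1^{k+1}\|$ at all — at the cost of a test point that depends on $\widehat{\vy}^{k+1}$ (harmless, since one only evaluates an inner product a posteriori) and a required degenerate-case convention $\vd=\vzero$ when $\vA^\top\widehat{\vy}^{k+1}=\vzero$, which you correctly handle and which is consistent because $\mathrm{Range}(\vA)\cap\mathrm{Null}(\vA^\top)=\{\vzero\}$ forces $\widehat{\vy}^{k+1}=\vzero$ in that case.
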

        
        \begin{proof}
            By~\eqref{eq:approx-cond} and $\overline{\cL}_{\beta_k}$ in~\eqref{eq:barL-func}, there exist $\vxi^{k+1} \in \partial h(\widehat{\vx}^{k+1})$ and $\vxi_f^{k+1} \in \partial \overline{f}(\widehat{\vx}^{k+1})$ such that
            \begin{align}
                \label{eq:vk1}
                \vw^{k+1} = \vxi_f^{k+1} + \vxi^{k+1} + \vA^\top\widehat{\vy}^{k+1} + J_{\vg}(\widehat{\vx}^{k+1})^\top\widehat{\vz}^{k+1} + 2\rho(\widehat{\vx}^{k+1}-\vx^k)
            \end{align}
            satisfies $\|\vw^{k+1} \| \leq \varepsilon_k$.
            For any point $\vx\in \cX$ satisfying $\vA\vx = \vb, \vg(\vx) \leq \mathbf{0}$, it holds by the convexity of $\{g_i\}$ and $\widehat{\vz}^{k+1}\ge\vzero$ that $$\sum_{i=1}^m \widehat{z}_i^{k+1}\langle \vx - \widehat{\vx}^{k+1}, \nabla g_i(\widehat{\vx}^{k+1})\rangle \le \sum_{i=1}^m \widehat{z}_i^{k+1}(g_i(\vx)- g_i(\widehat{\vx}^{k+1})).$$ Hence, we obtain
            \begin{align}\label{eq: appendmain0}
                \langle  \vx - \widehat{\vx}^{k+1}, \vw^{k+1}\rangle \leq& \langle \vx - \widehat{\vx}^{k+1}, \vxi^{k+1} + \vxi_f^{k+1} \rangle + \langle \vb - \vA\widehat{\vx}^{k+1}, \widehat{\vy}^{k+1} \rangle\\\notag
                 &   + \langle \vx - \widehat{\vx}^{k+1}, 2 \rho(\widehat{\vx}^{k+1} - \vx^k)\rangle + \sum_{i=1}^m \widehat{z}_i^{k+1}(g_i(\vx)- g_i(\widehat{\vx}^{k+1})).
             \end{align}
Also, by the definition of $\widehat{\vy}^{k+1}$ and $\widehat{\vz}^{k+1}$, we have 
            \begin{equation}
            \begin{aligned}\label{eq: appendyz2}
                 & \langle \vb - \vA\widehat{\vx}^{k+1}, \widehat{\vy}^{k+1} \rangle = -\frac{1}{\beta_k}\langle \widehat{\vy}^{k+1}, \widehat{\vy}^{k+1} - \vy^k \rangle, \quad \text{and} \\  
                & -\sum_{i=1}^m\widehat{z}_i^{k+1}g_i(\widehat{\vx}^{k+1}) = -\sum_{i=1}^m\widehat{z}_i^{k+1}\frac{\widehat{z}_i^{k+1}-z_i^k}{\beta_k}.
            \end{aligned}
            \end{equation}
            From Assumption~\ref{Assump5}, it follows that $\vxi^{k+1}=\vxi^{k+1}_1 + \vxi^{k+1}_2$, where $\vxi_1^{k+1} \in \cN_{\cX}(\widehat{\vx}^{k+1})$ and $\vxi_2^{k+1} \in B_{r_h}$. By this, we substitute~\eqref{eq: appendyz2} into~\eqref{eq: appendmain0} with $\vx = \vx_{\text{feas}}$ and notice  $\|\vw^{k+1} \| \leq \varepsilon_k \le 1$ 
 to have
            \begin{align}\label{eq: appendmain1}   
            &   \frac{1}{\beta_k}\langle \widehat{\vp}^{k+1}, \widehat{\vp}^{k+1}- \vp^k \rangle + g_{\min}\|\widehat{\vz}^{k+1} \|  
                \le   \frac{1}{\beta_k}\langle \widehat{\vp}^{k+1}, \widehat{\vp}^{k+1}- \vp^k \rangle-\sum_{i=1}^m\widehat{z}_i^{k+1}g_i(\vx_{\text{feas}}) \notag\\ 
                \le &\langle \vx_{\text{feas}} - \widehat{\vx}^{k+1}, \vxi_1^{k+1} \rangle + \langle \vx_{\text{feas}} - \widehat{\vx}^{k+1}, \vxi_2^{k+1} + \vxi_f^{k+1}\rangle +  \langle \vx_{\text{feas}} - \widehat{\vx}^{k+1}, 2\rho(\widehat{\vx}^{k+1} - \vx^k) \rangle \notag \\
                & \quad \quad \quad \quad - \langle \vx_{\text{feas}} - \widehat{\vx}^{k+1}, \vw^{k+1} \rangle\notag \\
                \le &\langle \vx_{\text{feas}} - \widehat{\vx}^{k+1}, \vxi_1^{k+1} \rangle + D(r_h + \overline{B}_f) + 2\rho D^2 + D .
            \end{align}
            We bound the first term in the RHS of~\eqref{eq: appendmain1} as follows. When $\widehat{\vx}^{k+1} \in \text{int}(\cX),$ we have $\cN_{\cX}(\widehat{\vx}^{k+1}) = \{\mathbf{0} \}$ and $\vxi_1^{k+1} = \mathbf0$. When $\widehat{\vx}^{k+1} \in \mathrm{bd}(\cX)$, we see that $$\cM = \{\vx \in \RR^d | (\vx - \widehat{\vx}^{k+1})^\top \vxi_1^{k+1} = 0 \}$$ is a supporting hyperplane of $\cX$ at $\widehat{\vx}^{k+1}$. Hence $$ 0 < \dist(\vx_{\text{feas}}, \mathrm{bd}(\cX)) \le \dist(\vx_{\text{feas}}, \cM) =\frac{|(\widehat{\vx}^{k+1} - \vx_{\text{feas}})^\top \vxi_1^{k+1}|}{\|\vxi_1^{k+1}\|}. $$
            We then have
            \begin{align}\label{ineq: hyperplanexi1}
                (\widehat{\vx}^{k+1} - \vx_{\text{feas}})^\top \vxi_1^{k+1} &= |(\widehat{\vx}^{k+1} - \vx_{\text{feas}})^\top \vxi_1^{k+1}| \notag \\ &= \dist(\vx_{\text{feas}}, \cM) \|\vxi_1^{k+1} \| \geq \dist(\vx_{\text{feas}}, \mathrm{bd}(\cX)) \|\vxi_1^{k+1} \|,
            \end{align}
            where the first equality follows from $\vxi_1^{k+1} \in \cN_{\cX}(\widehat{\vx}^{k+1})$ so that $(\widehat{\vx}^{k+1} - \vx_{\text{feas}})^\top \vxi_1^{k+1} \geq 0$.
            
            Since $\vy^0 \in \text{Range}(\vA)$, it holds $ \widehat{\vy}^{k+1} \in \text{Range}(\vA)$ for all $k \geq 0$. This implies that $\sqrt{\lambda_{\min}^+(\vA\vA^\top)}\|\widehat{\vy}^{k+1}\|\leq \|\vA^\top\widehat{\vy}^{k+1} \|$. 
            Hence, from $\|\vw^{k+1}\| \leq \varepsilon_k \le 1$,~\eqref{eq:vk1}, 
            and the triangle inequality, we obtain 
            \begin{align}\label{eq: appendlambdayy}
               \sqrt{\lambda_{\min}^+(\vA\vA^\top)}\|\widehat{\vy}^{k+1}\| 
               \leq & \varepsilon_k + \| \vxi_1^{k+1}\| + \|\vxi_2^{k+1} + \vxi_f^{k+1} +2\rho(\widehat{\vx}^{k+1}-\vx^k)\| + \|J_{\vg}(\widehat{\vx}^{k+1})^\top \widehat{\vz}^{k+1} \| \notag \\
               \le & \| \vxi_1^{k+1}\| + 1 + {r_h }+ 2\rho D + \overline{B}_f + \sqrt{m}B_g\|\widehat{\vz}^{k+1}\|.
                \end{align}
Now multiplying~\eqref{eq: appendlambdayy} by $\tau$, adding it to~\eqref{eq: appendmain1}, and using~\eqref{ineq: hyperplanexi1}, we have
            \begin{align*}
                &\quad\,\,\dist(\vx_{\text{feas}}, \mathrm{bd}(\cX)) \|\vxi_1^{k+1} \| - \tau \|\vxi_1^{k+1}\|+  \frac{1}{\beta_k}\langle \widehat{\vp}^{k+1}, \widehat{\vp}^{k+1}- \vp^k \rangle \\
                & \quad \quad \quad \quad + \tau \sqrt{\lambda_{\min}^+(\vA\vA^\top)}\|\widehat{\vy}^{k+1} \|
                + g_{\min}\| \widehat{\vz}^{k+1}\|\\  
                &\leq D(r_h + \overline{B}_f) + 2\rho D^2 + D + \tau \left(1 + {r_h }+ 2\rho D + \overline{B}_f + \sqrt{m}B_g\|\widehat{\vz}^{k+1}\|\right).
            \end{align*}
 We obtain the desired result from the inequality above, the definition of $\overline{B}$, and the choice of $\tau$.           
        \end{proof}

        Now, we are ready to prove Lemma~\ref{lem:boundxz}.
        \begin{proof} \textit{(of Lemma~\ref{lem:boundxz})}
            We prove the lemma by induction. The desired result holds trivially for $k = 0$. Suppose that it holds for $k=i$, i.e., 
            $\|\vp^{i}\| \leq B_0 + \sum_{j = 0}^{i-1} \widetilde{\delta}_j$. Then for $k = i+1$, we have 
            \begin{align*}
                &  \left(\min\left\{ \tau \sqrt{\lambda_{\min}^+(\vA\vA^\top)} , \frac{g_{\min}}{2} \right\} + \frac{\|\widehat{\vp}^{i+1}\|}{\beta_{i}}  \right)\|\widehat{\vp}^{i+1} \|
                \overset{\eqref{eq:appendbeforeinduction}}\leq \overline{B} + \frac{\|\widehat{\vp}^{i+1} \| \|\vp^{i}\|}{\beta_{i}}\\
                \leq &   \overline{B} + \frac{\|\widehat{\vp}^{i+1} \|(B_0 + \sum_{j = 0}^{i-1} \widetilde{\delta}_j) }{\beta_{i}}
                =   \min\left\{ \tau \sqrt{\lambda_{\min}^+(\vA\vA^\top)} , \frac{g_{\min}}{2} \right\}\theta + \frac{\|\widehat{\vp}^{i+1} \| (B_0 + \sum_{j = 0}^{i-1} \widetilde{\delta}_j)}{\beta_{i}}\\
                \leq &  \left( \min\left\{ \tau \sqrt{\lambda_{\min}^+(\vA\vA^\top)} , \frac{g_{\min}}{2} \right\} + \frac{\|\widehat{\vp}^{i+1} \|}{\beta_{i}}\right)(B_0 + \sum_{j = 0}^{i-1} \widetilde{\delta}_j),
            \end{align*}
            where 
            the equality follows from the definition of $\theta$, and the third inequality is by 
            $B_0 \ge \theta$. 
            Hence $\|\widehat{\vp}^{i+1}\| \leq B_0 + \sum_{j = 0}^{i-1} \widetilde{\delta}_j$. 
       Moreover, by the update of $\vy^{i+1}$ and $\vz^{i+1}$ and the triangle inequality, we have
            \begin{align*}
                & \quad\,\, \|\vp^{i+1}\| \\ 
                & \leq \left\| \left(\vy^i + \alpha_i(\vA\widehat{\vx}^{i+1} - \vb), \vz^i + \alpha_i \max\left\{\frac{-\vz^i}{\beta_i}, \vg(\widehat{\vx}^{i+1})\right\}\right ) \right\| + \alpha_i \|\vA(\widehat{\vx}^{i+1} - \vx^{i+1}) \| \\
                & \quad \quad \quad
                \quad \quad 
                 + \alpha_i \left\|   \max \left\{ -\frac{\vz^i}{\beta_i}, \vg({{\vx}^{i+1}}) \right\} - \max \left\{ -\frac{\vz^i}{\beta_i}, \vg({\widehat{\vx}^{i+1}}) \right\} \right\|\\
                &  \leq \left\| \left(\frac{\alpha_i}{\beta_i}\widehat{\vy}^{i+1} + \left(1-\frac{\alpha_i}{\beta_i}\right)\vy^i ,  \frac{\alpha_i}{\beta_i}\widehat{\vz}^{i+1} + \left(1-\frac{\alpha_i}{\beta_i} \right)\vz^i \right)\right\| + \alpha_i \| \vA\|\|\widehat{\vx}^{i+1} - \vx^{i+1}\| \\
                & \quad \quad \quad +\alpha_i \| \vg(\vx^{k+1}) - \vg(\widehat{\vx}^{k+1})\|\\
                &  \leq \left\| \left(\frac{\alpha_i}{\beta_i}\widehat{\vy}^{i+1}, \frac{\alpha_i}{\beta_i}\widehat{\vz}^{i+1}\right) + \left( \left(1-\frac{\alpha_i}{\beta_i}\right)\vy^i,  \left(1-\frac{\alpha_i}{\beta_i} \right)\vz^i  \right)  \right\| \\
                & \quad \quad \quad + \alpha_i\| \widehat{\vx}^{i+1} - \vx^{i+1}\| ( \|\vA\| + \sqrt{m} B_g ) \\
                &  \leq \left\| \left(\frac{\alpha_i}{\beta_i}\widehat{\vy}^{i+1}, \frac{\alpha_i}{\beta_i}\widehat{\vz}^{i+1}\right) \right\| + \left\| \left( \left(1-\frac{\alpha_i}{\beta_i}\right)\vy^i,  \left(1-\frac{\alpha_i}{\beta_i} \right)\vz^i  \right) \right\| + \alpha_i \delta_i (\|\vA\| + \sqrt{m} B_g)\\
                &=   \frac{\alpha_i}{\beta_i}\|\widehat{\vp}^{i+1} \| + \left(1-\frac{\alpha_i}{\beta_i}\right) \| \vp^i\| + \alpha_i \delta_i (\|\vA\| + \sqrt{m} B_g) \\
                & \leq B_0 + \sum_{j = 0}^{i-1} \widetilde{\delta}_j + \alpha_i \delta_i (\|\vA \| + \sqrt{m} B_g) = B_0 + \sum_{j = 0}^{i} \widetilde{\delta}_j,
            \end{align*}
            where 
            the third inequality follows from~\eqref{eq: boundg},
            and the last inequality is from the bound of $\|\widehat{\vp}^{i+1}\|$ and $\|\vp^i\|$. Hence, the desired result also holds for $k=i+1$. The proof is then completed.
        \end{proof}

         \section{Proof of Lemma~\ref{lem:tilderhodeltak}}\label{section: primalboundproof}
         The next lemma 
         follows from the second equation in the proof of~\cite[Lemma~7]{xu2021iteration-ialm} by viewing~$\vp^+$ as the dual iterate obtained after one update of the inexact ALM to a strongly convex problem from the initial iterate $\vp$.		
		\begin{lemma}\label{lemma: pbarbound}
			Let $\vx^* = \argmin_\vx \{\widehat{f}(\vx), \st \vA\vx = \vb, \vg(\vx) \leq \vzero\}$, where $\widehat{f}$ is strongly convex, and each component function in $\vg$ is convex. 
			Let $\mathcal{L}_{\beta}(\vx;\vp)$ be the AL function 
			with a multiplier $\vp = (\vy,\vz)$ and a penalty parameter $\beta>0$. 
Suppose $\vx^*$ is a KKT point of the problem with a corresponding multiplier $\vp^*=(\vy^*,\vz^*)$. 			Start from any $\vp$ with $\vz\ge \vzero$; let $\widehat{\vx}$ satisfy $\mathcal{L}_{\beta}\left(\widehat{\vx}; \vp \right) \leq \min_{\vx} \mathcal{L}_{\beta}\left(\vx; \vp \right) + \delta$ for some $\delta \ge 0$; 
			 set $\vy^+ = \vy + \beta\left( \vA\widehat{\vx} - \vb\right), \ \vz^+ = \left[\vz + \beta \vg( \widehat{\vx} )\right]_+ 
    $. 
			Then  
			$\left\|\vp^+-\vp^*\right\|^2 \leq \left\|\vp-\vp^*\right\|^2 + 2 \beta \delta.$ 
		\end{lemma}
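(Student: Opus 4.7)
The plan is to establish the bound by a direct calculation that combines the cosine-law identity on the $\vy$-update, the firmly nonexpansive projection onto $\RR_+^m$ for the $\vz$-update, and the saddle-point structure at $(\vx^*,\vp^*)$, bypassing any appeal to smoothness of a dual function.

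First, I would split $\tfrac12\|\vp^+-\vp^*\|^2-\tfrac12\|\vp-\vp^*\|^2$ into its $\vy$ and $\vz$ parts. Expanding the square for $\vy$ yields
\[
\tfrac12\|\vy^+-\vy^*\|^2-\tfrac12\|\vy-\vy^*\|^2 \,=\, \beta(\vy-\vy^*)\zz(\vA\widehat\vx-\vb)+\tfrac{\beta^2}{2}\|\vA\widehat\vx-\vb\|^2.
\]
Setting $\vu:=\vz+\beta\vg(\widehat\vx)$, the firmly nonexpansive inequality $\|\vz^+-\vz^*\|^2\le\|\vu-\vz^*\|^2-\|\vu-\vz^+\|^2$ (valid since $\vz^*\in\RR_+^m$ and $\vz^+$ is the projection of $\vu$ onto $\RR_+^m$) gives
\[
\tfrac12\|\vz^+-\vz^*\|^2-\tfrac12\|\vz-\vz^*\|^2 \,\le\, \beta(\vz-\vz^*)\zz\vg(\widehat\vx)+\tfrac{\beta^2}{2}\|\vg(\widehat\vx)\|^2-\tfrac12\|\vu-\vz^+\|^2.
\]

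Second, I would recognize the sum of these right-hand sides as $\beta[\mathcal L_\beta(\widehat\vx;\vp)-\ell(\widehat\vx;\vp^*)]$, where $\ell(\vx;\vy,\vz):=\widehat f(\vx)+\vy\zz(\vA\vx-\vb)+\vz\zz\vg(\vx)$ is the ordinary Lagrangian. The key intermediate identity is
\[
\mathcal L_\beta(\widehat\vx;\vp)\,=\,\widehat f(\widehat\vx)+\vy\zz(\vA\widehat\vx-\vb)+\tfrac{\beta}{2}\|\vA\widehat\vx-\vb\|^2+\vz\zz\vg(\widehat\vx)+\tfrac{\beta}{2}\|\vg(\widehat\vx)\|^2-\tfrac{1}{2\beta}\|\vu-\vz^+\|^2,
\]
obtained from $\|\vz^+\|^2=\|\vu\|^2-\|\vu-\vz^+\|^2$, which is a direct consequence of the componentwise orthogonality $\langle\vz^+,\vu-\vz^+\rangle=0$ for the $\RR_+^m$-projection. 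Substituting reduces the goal to $\mathcal L_\beta(\widehat\vx;\vp)-\ell(\widehat\vx;\vp^*)\le\delta$.

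Third, I would bound this difference by chaining three ingredients: (a) the hypothesis $\mathcal L_\beta(\widehat\vx;\vp)\le\mathcal L_\beta(\vx^*;\vp)+\delta$; (b) the Lagrangian inequality $\ell(\widehat\vx;\vp^*)\ge \ell(\vx^*;\vp^*)=\widehat f(\vx^*)$, which holds since $\vx^*\in\argmin_\vx\ell(\cdot;\vp^*)$ with $\vA\vx^*=\vb$ and $(\vz^*)\zz\vg(\vx^*)=0$; and (c) the saddle bound $\mathcal L_\beta(\vx^*;\vp)\le\widehat f(\vx^*)$, which reduces to $[\beta g_i(\vx^*)+z_i]_+\le z_i$ for every $i$, an immediate consequence of $g_i(\vx^*)\le 0$ and $z_i\ge 0$. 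The main technical hurdle is the bookkeeping around the nonnegative-orthant projection: the slack $\tfrac12\|\vu-\vz^+\|^2$ appearing in the distance inequality must be matched precisely with the same term in the closed-form expression for $\mathcal L_\beta(\widehat\vx;\vp)$ so that all quadratic penalty pieces collapse into $\beta[\mathcal L_\beta(\widehat\vx;\vp)-\ell(\widehat\vx;\vp^*)]$. Once this identification is made, the complementary-slackness structure at $(\vx^*,\vp^*)$ handles the remainder in a single line.
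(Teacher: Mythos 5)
Your proof is correct and complete. Each of the three building blocks checks out: the cosine-rule expansion for the $\vy$-update, the projection inequality $\|\vz^+-\vz^*\|^2\le\|\vu-\vz^*\|^2-\|\vu-\vz^+\|^2$ (valid because $\vz^*\ge\vzero$ is a KKT multiplier, hence lies in $\RR_+^m$), and the identity $\frac{\beta}{2}\big\|\big[\vg(\widehat\vx)+\frac{\vz}{\beta}\big]_+\big\|^2-\frac{\|\vz\|^2}{2\beta}=\vz^\top\vg(\widehat\vx)+\frac{\beta}{2}\|\vg(\widehat\vx)\|^2-\frac{1}{2\beta}\|\vu-\vz^+\|^2$, which follows from $\langle\vz^+,\vu-\vz^+\rangle=0$ for the orthant projection; summing does indeed collapse the right-hand sides to $\beta[\cL_\beta(\widehat\vx;\vp)-\ell(\widehat\vx;\vp^*)]$, and the chain $\cL_\beta(\widehat\vx;\vp)\le\cL_\beta(\vx^*;\vp)+\delta\le\widehat f(\vx^*)+\delta=\ell(\vx^*;\vp^*)+\delta\le\ell(\widehat\vx;\vp^*)+\delta$ uses exactly the right facts (feasibility of $\vx^*$, $[\beta g_i(\vx^*)+z_i]_+\le z_i$, complementary slackness, and global minimality of $\vx^*$ for the convex function $\ell(\cdot;\vp^*)$, which needs $\vz^*\ge\vzero$ and the convexity of each $g_i$). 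Note that the paper itself does not prove this lemma; it imports it from the cited reference \cite{xu2021iteration-ialm} by identifying $\vp^+$ with one inexact-ALM dual step. Your argument is the standard self-contained derivation behind that citation, so you are not taking a genuinely different route so much as supplying the proof the paper outsources; the only payoff worth recording is that your version makes explicit where each hypothesis ($\vz\ge\vzero$, $\vz^*\ge\vzero$, convexity of $\vg$, the KKT conditions, and the $\delta$-suboptimality) enters, and in fact yields the slightly stronger bound $\|\vp^+-\vp^*\|^2\le\|\vp-\vp^*\|^2+2\beta\delta-\|\vu-\vz^+\|^2\cdot 0$ with the discarded nonpositive slack available if one ever needs a telescoping refinement.
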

  
  Now, we are ready to give the proof of Lemma~\ref{lem:tilderhodeltak}.
	\begin{proof} \textit{(of Lemma~\ref{lem:tilderhodeltak})}
    Consider the following problem
		\begin{equation}\label{problem: subprobleWithConstraints}
			\min_\vx \,\,\overline{f}(\vx) + h(\vx) + \rho\left\|\vx-\vx^k\right\|^2,\quad \st \vA\vx = \vb,\,\, \vg(\vx) \leq \vzero.
		\end{equation}
	Since $\overline{f}$ is $\rho$-weakly convex, the objective function in~\eqref{problem: subprobleWithConstraints} is $\rho$-strongly convex. In addition, the constraints are convex. 
Thus~\eqref{problem: subprobleWithConstraints} has a unique solution $\widehat{\vx}_*^k$, and under Assumption~\ref{Assump3}, there must exist a multiplier $\widehat{\vp}_*^k = (\widehat{\vy}_*^k,\widehat{\vz}_*^k)$ corresponding to $\widehat{\vx}_*^k$ such that the KKT system holds.
		
		 By the definition of $\widehat{\vp}^{k+1}$ and the triangle inequality, we have 
   \begin{equation}
		\begin{aligned}\label{eq: residual}
		 	\left\| \vA \widehat{\vx}^{k+1}- \vb\right\|^2 + \left\|\left[\vg\left( \widehat{\vx}^{k+1}\right)\right]_+\right\|^2 &\leq \frac{1}{\beta_k^2} \left\| \widehat{\vp}^{k+1}- \vp^k\right\|^2 \\ &\leq \frac{1}{\beta_k^2}\left(\left\| \widehat{\vp}^{k+1}-\widehat{\vp}_*^k\right\| + \left\|{\widehat{ \vp}_*^k} - \vp^k\right\|\right)^2.
		\end{aligned}
  \end{equation}
Also, let $\vx^k_*=\argmin_\vx \overline{\mathcal{L}}_{\beta_k}( {\vx}; \vy^k, \vz^k)$; then by the $\rho$-strong convexity of $\overline{\cL}_{\beta_k}( \vx; \vy^k, \vz^k )$ and~\eqref{eq:approx-cond}, 
it holds 
that
		\begin{equation}\label{eq:obj-opt-error}\overline{\mathcal{L}}_{\beta_k}( \widehat{\vx}^{k+1}; \vy^k, \vz^k) - \min_\vx \overline{\mathcal{L}}_{\beta_k}( {\vx}; \vy^k, \vz^k) \leq \langle \widetilde{\vxi}, \widehat{\vx}^{k+1} - \vx^k_*  \rangle - \frac{\rho}{2}\|\widehat{\vx}^{k+1} - \vx^k_*\|^2 \leq \frac{1}{\rho}\| \widetilde{\vxi}\|^2 
		\leq \frac{ \varepsilon_k^2}{\rho},
		\end{equation}
  where  $\widetilde{\vxi} \in \partial_\vx \widetilde{\mathcal{L}}_{\beta_k}( \widehat{\vx}^{k+1}; \vy^k, \vz^k)$ such that $\|\widetilde{\vxi}\|\le\varepsilon_k$. 
  Hence, from Lemma~\ref{lemma: pbarbound}, 
  it follows that
		\begin{align}
			\label{eq:pk1barp}
		 	\left\|\widehat{\vp}^{k+1} - \widehat{\vp}_*^k \right\|^2 \leq \left\|\vp^{k}-\widehat{\vp}_*^k \right\|^2 + \frac{2\beta_k\varepsilon_k^2}{\rho}.
		\end{align}		
		Moreover, 
		using 
		\cite[Lemma~3]{lin2022complexity} to~\eqref{problem: subprobleWithConstraints} gives 
		$
		\left\|\widehat{\vz}_*^k\right\| \leq \frac{Q}{g_{\min}},\,\,
		\left\|\widehat{\vy}_*^k\right\| \leq Q\left\|( \vA \vA^{\top})^{\dagger} \vA\right\|C_1,
		$
		where $Q$ and $C_1$ are defined in~\eqref{eq:Q-C-1-Case1}.		
			Noticing
			$
			\left\|\vp^k\right\| \leq B_p,
			$
			we obtain that
				$$\left\|\widehat{\vp}^k_* - \vp^k\right\| \leq B_p + \sqrt{\frac{Q^2}{g_{\min}^2} + Q^2\left\|( \vA \vA^{\top})^{\dagger} \vA\right\|^2C_1^2},$$
			which together with~\eqref{eq: residual} and~\eqref{eq:pk1barp} imply
   \begin{equation}
			\begin{aligned}\label{eq:bd-pf-hat-k}
			 	\sqrt{\left\| \vA \widehat{\vx}^{k+1}- \vb\right\|^2 + \left\|\left[\vg\left( \widehat{\vx}^{k+1}\right)\right]_+\right\|^2} 
				&\leq \frac{1}{\beta_k}\left(2\left\|\widehat{\vp}_*^k-\vp^k\right\| + \varepsilon_k \sqrt{\frac{2\beta_k}{\rho}}\right) \\
            &\leq \frac{1}{\beta_k}\left(2\left\|\widehat{ \vp}_*^k-\vp^k\right\| + 1\right) 
                \le \frac{\widehat{C}_p}{\beta_k}.
			\end{aligned}
   \end{equation}
where in the second inequality, we have used $\vareps_k \le \sqrt{\frac{\rho}{2\beta_k}}$.		Now	we have
            \begin{align*}
                &\|\vA\vx^{k+1} - \vb\| + \|[\vg(\vx^{k+1})]_+ \| \\
                =&   \|\vA\widehat{\vx}^{k+1} - \vb + \vA(\vx^{k+1} - \widehat{\vx}^{k+1})\| + \|[\vg(\widehat{\vx}^{k+1}) + (\vg(\vx^{k+1}) - \vg(\widehat{\vx}^{k+1}))]_+ \|\\
                \leq & \|\vA\widehat{\vx}^{k+1} - \vb\|  + \|\vA(\vx^{k+1} - \widehat{\vx}^{k+1})\| + \|[\vg(\widehat{\vx}^{k+1})]_+\| + \|[(\vg(\vx^{k+1}) - \vg(\widehat{\vx}^{k+1}))]_+ \| \\
                \leq& \frac{2\widehat{C}_p}{\beta_k} + \delta_k( \|\vA \| + \sqrt{m}B_g),
            \end{align*}
            where the first inequality is by the triangle inequality and $[a + b]_+ \leq [a]_+ + [b]_+$, and the second one holds from~\eqref{eq: boundg} and~\eqref{eq:bd-pf-hat-k}. The desired result then follows from the inequality above.
		\end{proof}
  
        		\section{Proof of Lemma~\ref{lem:boundggg}} \label{AppendixA}
		 \begin{proof}
		Denote 
		$$
			J_1^k = \{i: -\frac{z_i^k}{\beta_k} \geq g_i(\vx^{k+1})\},\,\,J_2^k = \{i: -\frac{z_i^k}{\beta_k} < g_i(\vx^{k+1})\},
			$$ $$
       J_+^k := \{i : g_i( \vx^{k+1}) \geq 0\}, \text{ and }J_-^k := \{i : g_i( \vx^{k+1}) < 0\}.$$
			From the updating rule of $\vz^{k+1}$, we have 
            \begin{align*}
                & \text{$z_i^{k+1} = \left(1-\frac{\alpha_k}{\beta_k}\right) z_i^k$ and $g_i \left(\vx^{k+1} \right) < 0$ for all $i \in J_1^k$;}\\
                & \text{$z_i^{k+1} = z_i^k + \alpha_k g_i\left( \vx^{k+1}\right)$ and $g_i \left(\vx^{k+1} \right) < 0$  for all $ i \in {J}_2^k \cap J_-^k$;}  \\
                & \text{$z_i^{k+1} = z_i^k + \alpha_k g_i\left( \vx^{k+1}\right) \text{ and } g_i \left(\vx^{k+1} \right) \geq 0$ for all $i \in {J}_2^k \cap J_+^k$.}
            \end{align*}
			Below, we look at $\frac{\beta_{k+1}}{2}\big\|[\vg(\vx^{k+1})+\frac{ \vz^{k+1}}{\beta_{k+1}}]_+\big\|^2 - \frac{\beta_k}{2}\big\|[\vg(\vx^{k+1})+\frac{ \vz^k}{\beta_k}]_+\big\|^2$ for these three cases.
			
			\underline{Case I: $i \in J_1^k $}:
		We have $$g_i\left( \vx^{k+1}\right)+\frac{z_i^{k+1}}{\beta_{k+1}}=g_i\left( \vx^{k+1}\right) + \frac{1}{\beta_{k+1}}\left(1-\frac{\alpha_k}{\beta_k}\right) z_i^k
        \leq g_i\left( \vx^{k+1}\right) + \frac{1}{\beta_{k}} z_i^k \leq 0 ,$$ as $0<\beta_k \leq \beta_{k+1}$, $0\le \alpha_k\le \beta_k$, and $\vz^k\ge \vzero$.
			
			Hence,
			$
			\frac{\beta_{k+1}}{2} [g_i(\vx^{k+1}) + \frac{z_i^{k+1}}{\beta_{k+1}}  ]_+^2 - \frac{\beta_k}{2} [g_i(\vx^{k+1}) + \frac{ z_i^k}{\beta_k}]_+^2
			= 0. 
			$
			
			\underline{Case II: $i \in {J}_2^k\cap J_-^k$}: We have $\beta_k g_i \left(\vx^{k+1} \right) + z_i^k > 0$ and $g_i\left(\vx^{k+1}\right) < 0$. These two inequalities imply $g_i(\vx^{k+1}) + \frac{ z_i^k}{\beta_k}\ge 0$. We next consider two subcases based on the sign of $(\beta_{k+1} + \alpha_k) g_i \left(\vx^{k+1} \right) + z_i^k$.
			
			Subcase (i): When $(\beta_{k+1} + \alpha_k) g_i \left(\vx^{k+1} \right) + z_i^k \geq 0$, we have 
			\begin{align}\label{eq: xdifflemmayyy}
				&\quad \frac{\beta_{k+1}}{2} \left[g_i\left(\vx^{k+1}\right) + \frac{1}{\beta_{k+1}} \left( z_i^k + \alpha_kg_i\left(\vx^{k+1}\right)\right)\right]_+^2 - \frac{\beta_k}{2} \left[g_i\left(\vx^{k+1}\right) + \frac{ z_i^k}{\beta_k}\right]_+^2 \notag\\
				&  = \frac{1}{2\beta_{k+1}} \left[ (\beta_{k+1} +\alpha_k)^2 g_i^2(\vx^{k+1}) + 2(\beta_{k+1} + \alpha_k) z_i^kg_i\left(\vx^{k+1}\right)+ \left(z_i^k\right)^2\right] \notag \\ 
                & \quad \quad \quad \quad -\frac{\beta_k}{2} \left(g_i\left(\vx^{k+1}\right) + \frac{ z_i^k}{\beta_k}\right)^2 \notag\\
				& =  \frac{\alpha_k}{\beta_{k+1}}g_i\left(\vx^{k+1}\right)\left( z_i^k + (\beta_{k+1} + \frac{\alpha_k}{2}) g_i\left(\vx^{k+1}\right) \right) + \frac{\beta_{k+1}-\beta_k}{2}(g_i(\vx^{k+1}))^2 \notag \\
                & \quad \quad \quad \quad + \left(\frac{1}{2\beta_{k+1}} - \frac{1}{2\beta_k}\right)\left(z_i^k\right)^2 \\
				&   \leq \frac{\beta_{k+1}-\beta_k}{2}g_i^2(\vx^{k+1}) + \left(\frac{1}{2\beta_{k+1}} - \frac{1}{2\beta_k}\right)\left(z_i^k\right)^2 \notag \\
				 &\leq \frac{\beta_{k+1}-\beta_k}{2} \frac{\left(z_i^k\right)^2}{\beta^2_{k}} + \left(\frac{1}{2\beta_{k+1}} - \frac{1}{2\beta_k}\right)\left(z_i^k\right)^2 
				= \frac{(\beta_k - \beta_{k+1})^2}{2\beta^2_k\beta_{k+1}}\left(z_i^k\right)^2 \notag,
			\end{align}
			where the first inequality holds because 
			the first term in~\eqref{eq: xdifflemmayyy} is negative, 
			and the last inequality follows from $g_i^2(\vx^{k+1}) \leq \frac{\left(z_i^k\right)^2}{\beta^2_{k}}$. 
            
			Subcase (ii): When $(\beta_{k+1} + \alpha_k) g_i \left(\vx^{k+1} \right) + z_i^k < 0$, it is obvious that
   $$
   \frac{\beta_{k+1}}{2} [g_i(\vx^{k+1}) + \frac{1}{\beta_{k+1}} ( z_i^k + \alpha_kg_i(\vx^{k+1}))]_+^2
   - \frac{\beta_k}{2} [g_i(\vx^{k+1}) + \frac{ z_i^k}{\beta_k}]_+^2 \leq 0,
   $$
   as $[g_i(\vx^{k+1}) + \frac{1}{\beta_{k+1}} ( z_i^k + \alpha_kg_i(\vx^{k+1}))]_+=0$.
			
			\underline{Case III: $i \in J_2^k\cap J_+^k$}: We still have \eqref{eq: xdifflemmayyy} by $g_i(\vx^{k+1}) + \frac{ z_i^k}{\beta_k}\ge 0$. We then know from $\beta_k \le \beta_{k+1}$ that 
			\begin{align*}
				& \frac{\beta_{k+1}}{2} \left[g_i\left(\vx^{k+1}\right) + \frac{1}{\beta_{k+1}} \left( z_i^k + \alpha_kg_i\left(\vx^{k+1}\right)\right)\right]_+^2 - \frac{\beta_k}{2} \left[g_i\left(\vx^{k+1}\right) + \frac{ z_i^k}{\beta_k}\right]_+^2\\
				 \leq &\left( \frac{\beta_{k+1}-\beta_k}{2} + \alpha_k + \frac{\alpha_k^2}{2\beta_{k+1}}\right)\left[g_i(\vx^{k+1})\right]_+^2 + \frac{\alpha_k} {\beta_{k+1}} z_i^kg_i\left(\vx^{k+1}\right).
			\end{align*}
   Combining the above three cases, we get 
			\begin{equation}
				\label{eq:eachiter}
				\begin{aligned}
					& \frac{\beta_{k+1}}{2}\left\|\left[\vg\left(\vx^{k+1}\right) + \frac{ \vz^{k+1}}{\beta_{k+1}}\right]_+\right\|^2 - \frac{\beta_k}{2}\left\|\left[\vg\left(\vx^{k+1}\right) + \frac{ \vz^k}{\beta_k}\right]_+\right\|^2\\
					  \leq &\left(\frac{\beta_{k+1}-\beta_k}{2} + \alpha_k + \frac{\alpha_k^2}{2\beta_{k+1}}\right)\left\|\left[\vg\left(\vx^{k+1}\right)\right]_+\right\|^2 + \frac{\alpha_k}{\beta_{k+1}}\left\| \vz^k\right\|\left\|\left[\vg\left( \vx^{k+1}\right)\right]_+\right\| \\
                    & \quad \quad \quad \quad + \frac{(\beta_k - \beta_{k+1})^2}{2\beta^2_k\beta_{k+1}}\left\|\vz^k \right\|^2.
				\end{aligned}
			\end{equation}
			Summing up the inequality in~\eqref{eq:eachiter} from $k = 0$ to $K-1$ yields 
			\begin{align*}
				&\quad\sum_{k=0}^{K-1}\left(\frac{\beta_{k+1}}{2}\left\|\left[\vg\left(\vx^{k+1}\right)+\frac{ \vz^{k+1}}{\beta_{k+1}}\right]_+\right\|^2 - \frac{\beta_k}{2}\left\|\left[\vg\left(\vx^{k+1}\right)+\frac{ \vz^k}{\beta_k}\right]_+\right\|^2\right)\\
				&\leq \sum_{k=0}^{K-1} \left(\left (
				\frac{\beta_{k+1}-\beta_k}{2} + \alpha_k + \frac{\alpha_k^2}{2\beta_{k+1}}\right)\left\|\left[\vg\left(\vx^{k+1}\right)\right]_+ \right\|^2 
				+ \frac{\alpha_k}{\beta_{k+1}}\| \vz^k\|\left\|\left[\vg\left(\vx^{k+1}\right)\right]_+\right\| \right. \\
                &\left. \quad \quad \quad \quad + \frac{(\beta_k - \beta_{k+1})^2}{2\beta^2_k\beta_{k+1}}\left\|\vz^k \right\|^2\right) \\
				&= \sum_{k=0}^{K-1} \left( \frac{\beta_{k+1}-\beta_k}{2}\left\|\left[\vg\left(\vx^{k+1}\right)\right]_+\right\|^2 
				+ \alpha_k \left\|\left[\vg\left(\vx^{k+1}\right)\right]_+\right\|^2 + \frac{\alpha_k}{2\beta_{k+1}} \alpha_k \left\|[\vg(\vx^{k+1})]_+\right\|^2  \right) \\
				&\quad \quad \quad \quad    + \sum_{k=0}^{K-1} \left( \frac{\| \vz^k\|}{\beta_{k+1}}\alpha_k\left\|\left[\vg\left(\vx^{k+1}\right)\right]_+\right\|\right) + \sum_{k=0}^{K-1} \frac{(\beta_k - \beta_{k+1})^2}{2\beta^2_k\beta_{k+1}}\left\|\vz^k \right\|^2 \\
				&  \leq \sum_{k=0}^{K-1}\frac{\beta_{k+1}-\beta_k}{2}\left\|\left[\vg\left(\vx^{k+1}\right)\right]_+\right\|^2 + \sum_{k=0}^{K-1} \left(v_k \left\|\left[\vg\left(\vx^{k+1}\right)\right]_+\right\| + \frac{\beta_kv_k\|[\vg\left(\vx^{k+1}\right)]_+\|}{2\beta_{k+1}} \right)  \\
				& \quad \quad \quad \quad   + \sum_{k=0}^{K-1} \frac{\| \vz^k\|}{\beta_{k+1}}v_k + \sum_{k=0}^{K-1} \frac{(\beta_k - \beta_{k+1})^2}{2\beta^2_k\beta_{k+1}}\left\|\vz^k \right\|^2\\
				&  \leq \sum_{k=0}^{K-1}\frac{\beta_{k+1}-\beta_k}{2}\left\|\left[\vg\left(\vx^{k+1}\right)\right]_+\right\|^2 + \sum_{k=0}^{K-1}\frac{3}{2}v_k \left\|\left[\vg\left(\vx^{k+1}\right)\right]_+\right\| + \sum_{k=0}^{K-1}\frac{v_k}{\beta_{k+1}}\| \vz^k\| \\
                & \quad \quad \quad \quad + \sum_{k=0}^{K-1} \frac{(\beta_k - \beta_{k+1})^2}{2\beta^2_k\beta_{k+1}}\left\|\vz^k \right\|^2\\
				&  \leq\sum_{k=0}^{K-1}\frac{{C}_p^2(\beta_{k+1}-\beta_k)}{2\beta_k^2} + \frac{3}{2}\sum_{k=0}^{K-1} v_k \frac{{C}_p}{\beta_k}  + \sum_{k=0}^{K-1}\frac{v_k}{\beta_{k}}B_p + \sum_{k=0}^{K-1} \frac{\beta_{k+1} - \beta_{k}}{2\beta^2_k}B_p^2,
			\end{align*}
			where the second inequality holds from $\alpha_k \|[\vg(\vx^{k+1})]_+\| \leq v_k$ and $\alpha_k \leq \beta_k$, the third one is due to $\beta_k \leq \beta_{k+1}$, 
			the fourth one uses Lemma~\ref{lem:boundxz} and inequality~\eqref{eq: diffL}.
   
			The proof of~\eqref{eq:lem:boundggg} is then completed.
		{To prove~\eqref{eq:lem: yyybound}, we start by noticing
			$$\left\langle \vy^{k+1}- \vy^k, \vA \vx^{k+1}- \vb\right\rangle = \alpha_k\| \vA \vx^{k+1}- \vb\|^2 \leq v_k\|\vA \vx^{k+1}- \vb\|$$ from the update of $\vy^k$ and definition of $\alpha_k$}. 
			Hence, by Lemma~\ref{lem:boundxz}, the definitions of $v_k,\beta_k$, and~\eqref{eq: diffL}, it follows that
			$$
			\sum_{k=0}^{K-1} \langle \vy^{k+1}- \vy^k, \vA \vx^{k+1}- \vb\rangle \leq {C}_p\sum_{k=0}^{K-1} \frac{v_k}{\beta_k}.
			$$
			In addition, from inequality~\eqref{eq: diffL}, we have
			$$
			 \sum_{k=0}^{K-1} \frac{\beta_{k+1}-\beta_k}{2}\| \vA \vx^{k+1}- \vb\|^2 
			 \leq {C}_p^2\sum_{k=0}^{K-1} \frac{\beta_{k+1}-\beta_k}{2\beta_k^2}.
			$$
			Therefore, we complete the proof.
            \end{proof}
   
  		\section{Nesterov's Accelerated Proximal Gradient (APG) Method} \label{AppendixB}
		In this section, we review Nesterov's APG method in \cite{nesterov2013gradient} for solving composite convex problems in the form of 
		\begin{align}\label{Subproblem: CompositeNesterov}
			\min_{\vx \in \RR^d} \phi(\vx) := \widetilde{f}(\vx) + \widetilde{h}(\vx),
		\end{align}
		where $\widetilde{f}$ is convex and $L_{\widetilde{f}}$-smooth, and $\widetilde{h}$ is closed and $\mu$-strongly convex with  $\mu>0$. 
The algorithm is shown in Alg.~\ref{alg:acceleratedNesterov}, where $$\phi'(\mathcal{M}_{L}(\vy)) :=  L(\vy-\mathcal{M}_{L}(\vy)) + \nabla \widetilde{f}(\mathcal{M}_{L}(\vy)) - \nabla \widetilde{f}(\vy), \text{ }\mathcal{M}_{L}(\vy) := \prox_{\frac{\widetilde{h}}{L}}(\vy - \frac{1}{L}\nabla \widetilde{f}(\vy))$$ for some $L>0$, and $$\psi_{t+1}(\vx) = \psi_t({\vx}) + a_{t+1}[\widetilde{f}(\vx^{t+1}) + \langle \nabla \widetilde{f}(\vx^{t+1}), \vx - \vx^{t+1} \rangle + \widetilde{h}(\vx)]$$ with a positive sequence $\{a_t\}$, and $\psi_0(\vx) = \frac{1}{2}\|\vx-\vx^0\|^2$. 
		\begin{algorithm}[htbp!]
			\caption{{Nesterov's Accelerated Proximal Gradient (APG) Method} for~\eqref{Subproblem: CompositeNesterov}} \label{alg:acceleratedNesterov}
			\DontPrintSemicolon
			{\small
				\textbf{Initialization:} choose $ \vx^0, L:=L_0, \gamma_d \geq \gamma_u > 1, \Delta > 0$ and set $A_0 = 0, \vv^0 = \vy^0 = \vy =  \vx^0.$ 
				
				\For{$t=0,1,\ldots $}{
					\While{$\left\langle \phi'(\mathcal{M}_{L}(\vy)), \vy - \mathcal{M}_{L}(\vy) \right\rangle < \frac{1}{{L}} \|\phi'(\mathcal{M}_{L}(\vy))\|^2$}
					{     $L \gets {L}\gamma_u$;
						let $a>0$ and satisfy $\frac{a^2}{A_t + a} = 2 \frac{1 + \mu A_t}{L}$; 
					        $\vy = \frac{A_t\vx^t + a\vv^t}{A_t + a}$. 
						}
					
					Set $L_{t+1} \gets L$,
					$\vy^t \gets \vy, a_{t+1} = a$, $L \gets \frac{L_{t+1}}{\gamma_d}$, 
					$ \vx^{t+1} \gets \mathcal{M}_{L_{t+1}}(\vy^t)$,
					$A_{t+1} = A_t + a_{t+1}$. 
                    Let $\vv^{t+1} := \argmin_\vx \psi_{t+1}(\vx) = \text{prox}_{A_{t+1}\widetilde{h}(\cdot)} (\vx^0 - \sum_{i=1}^{t+1}a_i\nabla \widetilde{f}(\vx^i))$. 
					
					\textbf{if} {$\dist(\mathbf{0},\partial\phi(\vx^{t+1})) \leq \Delta$} \textbf{then}{
						output $\vx^{t+1}$ and stop.
					}
			}}
		\end{algorithm}
		
		The next theorem gives the number of iterations for Alg.~\ref{alg:acceleratedNesterov} to output the desired solution. 
		\begin{theorem}\label{Th: TboundNesterov}
			Suppose that $\{\vx^t\}$ is the sequence generated by Alg.~\ref{alg:acceleratedNesterov} and $\mathrm{dom}(\widetilde{h})$ is bounded with a diameter {$D = \max_{\vx,\vx'\in \mathrm{dom}(\widetilde{h})} \|\vx-\vx'\| < \infty$}. Then $\dist(\mathbf{0},\partial \phi(\vx^{t+1})) \leq \Delta, $ for all  
                $$
                t\ge T  := \left\lceil\max\left\{\frac{1}{\log 2}, 2{\sqrt{\frac{\gamma_uL_{\widetilde{f}}}{2\mu}}}\right\} \log \frac{3(1+\gamma_u)DL_{\widetilde{f}}\sqrt{\frac{2\gamma_uL_{\widetilde{f}}}{\mu}}}{2\Delta}\right\rceil + 1.
			$$
		\end{theorem}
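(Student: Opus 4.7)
The plan is to combine Nesterov's standard estimate-sequence analysis for accelerated composite methods on $\mu$-strongly convex problems with an upper bound on a subgradient residue at $\vx^{t+1}$ that can be read off directly from the accepted line-search inequality, and then invert the resulting linear-rate decay to solve for $T$.

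First I would verify that the backtracking loop always terminates with $L_{t+1}\le \gamma_u L_{\widetilde{f}}$. Writing $\phi'(\mathcal{M}_L(\vy)) = L(\vy-\mathcal{M}_L(\vy)) + \nabla\widetilde{f}(\mathcal{M}_L(\vy))-\nabla\widetilde{f}(\vy)$, a short calculation reduces the loop condition to the co-coercivity inequality for $\nabla\widetilde{f}$, which holds as soon as $L\ge L_{\widetilde{f}}$; hence backtracking stops before $L$ exceeds $\gamma_u L_{\widetilde{f}}$. With this uniform bound on $L_{t+1}$, the standard estimate-sequence induction on $\psi_t$, driven by the step-length rule $a_{t+1}^2/(A_t+a_{t+1}) = 2(1+\mu A_t)/L_{t+1}$ and the convexity of $\widetilde{f}$, yields
\begin{equation*}
A_t(\phi(\vx^t)-\phi^\ast) + \tfrac{1+\mu A_t}{2}\|\vv^t-\vx^\ast\|^2 \;\le\; \psi_0(\vx^\ast) \;\le\; \tfrac{D^2}{2},
\end{equation*}
together with the accumulator growth $A_t \ge (\gamma_u L_{\widetilde{f}})^{-1}\bigl(1+\sqrt{\mu/(2\gamma_u L_{\widetilde{f}})}\bigr)^{2(t-1)}$. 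Since $\phi$ is $\mu$-strongly convex, the function-value bound is upgraded to $\|\vx^t-\vx^\ast\|^2 \le 2(\phi(\vx^t)-\phi^\ast)/\mu$, so both $\|\vx^t-\vx^\ast\|$ and $\|\vv^t-\vx^\ast\|$ decay geometrically with rate $\bigl(1+\sqrt{\mu/(2\gamma_u L_{\widetilde{f}})}\bigr)^{-(t-1)}$ and a pre-factor of order $D\sqrt{\gamma_u L_{\widetilde{f}}/\mu}$.

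Next I would extract the subgradient at $\vx^{t+1}$. The first-order optimality of the proximal step gives $\phi'(\vx^{t+1}) = L_{t+1}(\vy^t-\vx^{t+1})+\nabla\widetilde{f}(\vx^{t+1})-\nabla\widetilde{f}(\vy^t) \in \partial\phi(\vx^{t+1})$, so $\dist(\vzero,\partial\phi(\vx^{t+1})) \le \|\phi'(\vx^{t+1})\|$; applying Cauchy--Schwarz to the accepted inequality $\langle \phi'(\vx^{t+1}),\vy^t-\vx^{t+1}\rangle \ge \|\phi'(\vx^{t+1})\|^2/L_{t+1}$ yields $\|\phi'(\vx^{t+1})\| \le L_{t+1}\|\vy^t-\vx^{t+1}\|$. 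Writing $\vy^t$ as a convex combination of $\vx^t$ and $\vv^t$, the triangle inequality bounds $\|\vy^t-\vx^{t+1}\|$ by $\|\vy^t-\vx^\ast\|+\|\vx^{t+1}-\vx^\ast\|$, both of which decay at the rate established in the previous step. This produces an overall bound of the form $\|\phi'(\vx^{t+1})\| \le C\,\bigl(1+\sqrt{\mu/(2\gamma_u L_{\widetilde{f}})}\bigr)^{-(t-1)}$ with $C$ of order $(1+\gamma_u)DL_{\widetilde{f}}\sqrt{\gamma_u L_{\widetilde{f}}/\mu}$.

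Finally, setting this upper bound $\le \Delta$ and inverting with the elementary inequalities $\log(1+q)\ge q/(1+q)$ for $q\le 1$ and $\log(1+q)\ge \log 2$ for $q\ge 1$, applied to $q = \sqrt{\mu/(2\gamma_u L_{\widetilde{f}})}$, produces exactly the factor $\max\{1/\log 2,\,2\sqrt{\gamma_u L_{\widetilde{f}}/(2\mu)}\}$ in front of the logarithm, while the trailing ``$+1$'' absorbs the shift from $t-1$ to $t+1$. The main obstacle will be careful bookkeeping of the constant $C$ so that it matches precisely the factor $3(1+\gamma_u)DL_{\widetilde{f}}\sqrt{2\gamma_u L_{\widetilde{f}}/\mu}/2$ inside the logarithm; everything else is a direct combination of the estimate-sequence identity with the line-search descent inequality.
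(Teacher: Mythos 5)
Your proposal is correct and follows essentially the same route as the paper's proof: linear decay of $\phi(\vx^t)-\phi(\vx^*)$, $\|\vx^t-\vx^*\|$, and the $\vv^t$-related distance from the estimate-sequence machinery and the growth of $A_t$, a subgradient residual at $\vx^{t+1}$ of size $O\big((1+\gamma_u)L_{\widetilde f}\big)\|\vy^t-\vx^{t+1}\|$ from the prox-step optimality, and inversion of the geometric rate to obtain $T$. The only cosmetic differences are that you bound $\|\phi'(\vx^{t+1})\|$ by Cauchy--Schwarz on the accepted line-search test (giving $\gamma_u L_{\widetilde f}\|\vy^t-\vx^{t+1}\|$) rather than via Lipschitz continuity of $\nabla\widetilde f$ as in the paper, and you split $\|\vy^t-\vx^{t+1}\|$ through $\vx^*$ instead of through $\vx^t$ and the bound $\|\vx^t-\vv^t\|\le D/\sqrt{\mu A_t+1}$; both variants yield the same order of constant and the same $T$ up to the bookkeeping you already flag.
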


	\begin{proof}
	Let $\vx^*$ be the minimizer of problem~\eqref{Subproblem: CompositeNesterov}. Then from~\cite[Theorem 6]{nesterov2013gradient}, it holds 
			\begin{equation}\label{eq:mainNesterovResult}
			 	\phi(\vx^{t+1}) - \phi(\vx^*) \leq \frac{\gamma_uL_{\widetilde{f}}}{4}\|\vx^{*}-\vx^{0}\|^2\left[ 1 + \sqrt{\frac{\mu}{2\gamma_uL_{\widetilde{f}}}}\right]^{-2t}.
			\end{equation}
   {By the optimality condition in the definition of $\mathcal{M}_{L_{t+1}}(\vy^t)$, 
   we have $$\nabla \widetilde{f}(\vx^{t+1})-\nabla \widetilde{f}(\vy^t) - L_{t+1}\left(\vx^{t+1}-\vy^t\right) \in \partial \phi(\vx^{t+1}).$$ Also,  from \cite[Eqn.~(4.11)]{nesterov2013gradient}, it holds $L_{t+1} \leq \gamma_u L_{\widetilde{f}}$.  Using these, we obtain} 
			
			\begin{align}\label{ineq:subgradphiBound}
				\dist\left(\vzero,\partial \phi\left(\vx^{t+1}\right)\right) &\leq \left\| \nabla \widetilde{f}(\vx^{t+1})-\nabla \widetilde{f}(\vy^t) - L_{t+1}\left(\vx^{t+1}-\vy^t\right)\right\| \notag \\
				& \leq L_{\widetilde{f}}\left\|\vx^{t+1}-\vy^t \right\|+L_{t+1}\left\|\vx^{t+1}-\vy^t\right\| \notag\\
				&\leq L_{\widetilde{f}}(1+\gamma_u)\left\|\vx^{t+1}-\vy^t\right\|
				\leq L_{\widetilde{f}}(1+\gamma_u)\left(\left\|\vx^{t+1}-\vx^t\right\| + \left\|\vx^{t}-\vy^t\right\|\right).
			\end{align}
			Notice that $\psi_t$ is a $(\mu A_t+1)$-strongly convex function. Hence, 
			\begin{align}\label{eq: psi_x_int}
				\left\|\vx^t-\vv^t\right\|^2 &   \leq \frac{2}{\mu A_t +1}\left(\psi_t(\vx^t) - \psi_t^*\right)
				\leq \frac{2}{\mu A_t+1} \left(A_t\phi(\vx^t) - \psi_t^* + \frac{1}{2}\left\|\vx^t-\vx^{0}\right\|^2\right)\notag\\
				&   \leq \frac{1}{\mu A_t + 1}\left\|\vx^t-\vx^{0}\right\|^2
				\leq \frac{D^2}{\mu A_t+1},
			\end{align}
			where $\psi_t^* := \min_\vx \psi_t(\vx)$, and the second/third inequality is from~\cite[Eqn.~(4.4)]{nesterov2013gradient}. 
			
			Combining the $\mu$-strong convexity of $\phi(\vx)$ and~\eqref{eq:mainNesterovResult} gives 
   \begin{align}
        \frac{\mu}{2} \|\vx^{t+1} - \vx^*\|^2 \leq \phi(\vx^{t+1}) - \phi(\vx^*) \leq \frac{\gamma_uL_{\widetilde{f}}}{4}\|\vx^{*}-\vx^{0}\|^2\left[ 1 + \sqrt{\frac{\mu}{2\gamma_uL_{\widetilde{f}}}}\right]^{-2t},
   \end{align}
   which implies
			\begin{equation}\label{eq: xTbound}
		 		\left\|\vx^{t+1}-\vx^*\right\| \leq \sqrt{\frac{\gamma_uL_{\widetilde{f}}}{2\mu}}\left\|\vx^*-\vx^{0}\right\|\left[ 1 + \sqrt{\frac{\mu}{2\gamma_uL_{\widetilde{f}}}}\right]^{-t}
				\leq D\sqrt{\frac{\gamma_uL_{\widetilde{f}}}{2\mu}}\left[ 1 + \sqrt{\frac{\mu}{2\gamma_uL_{\widetilde{f}}}}\right]^{-t}.
			\end{equation}
			Since 
			$
			\vy^t = \frac{A_t\vx^t+a_{t+1}\vv^t}{A_t + a_{t+1}}
			$
			and $a_{t+1}>0$, it must hold $\|\vy^t-\vx^t\| \leq \|\vv^t-\vx^t\|$. Using this in~\eqref{ineq:subgradphiBound} and combining it with~\eqref{eq: psi_x_int}, we get
			\begin{equation}\label{eq: subgradLtildeIntermediate}
			 	\dist\left(\vzero,\partial \phi\left(\vx^{t+1}\right)\right) \leq (1+\gamma_u)L_{\widetilde{f}}\left(\left\|\vx^{t+1}-\vx^t\right\| + \frac{D}{\sqrt{\mu A_t + 1}}\right).
			\end{equation}
			By the triangle inequality, we have $\left\|\vx^{t+1}-\vx^t\right\| \leq \left\|\vx^{t+1}-\vx^*\right\| + \left\|\vx^t-\vx^*\right\|$, which together with~\eqref{eq: xTbound} gives 
			\begin{align}\label{eq:xTxT+1bound}
			 	\left\|\vx^{t+1}-\vx^t\right\| 
				\leq D\sqrt{\frac{2\gamma_uL_{\widetilde{f}}}{\mu}} \left[ 1 + \sqrt{\frac{\mu}{2\gamma_uL_{\widetilde{f}}}}\right]^{-t+1}.
			\end{align}
			Now by~\eqref{eq: subgradLtildeIntermediate},~\eqref{eq:xTxT+1bound}, and  $ A_t \geq \frac{2}{(\gamma_uL_{\widetilde{f}})} \left[ 1 + \sqrt{\frac{\mu}{(2\gamma_uL_{\widetilde{f}}})}\right]^{2(t-1)}$ from~\cite[Lemma 8]{nesterov2013gradient}, we have 
			\begin{align}\label{eq: finalsubproblemresult}
			 	\dist\left(\vzero,\partial \phi\left(\vx^{t+1}\right)\right) \leq \frac{3}{2}(1+\gamma_u)DL_{\widetilde{f}}\left( \sqrt{\frac{2\gamma_uL_{\widetilde{f}}}{\mu}} \right)\left[ 1 + \sqrt{\frac{\mu}{2\gamma_uL_{\widetilde{f}}}}\right]^{-t+1}.
			\end{align}
		This implies	$\dist(\mathbf{0},\partial \phi(\vx^{t+1})) \leq \Delta, $ for all $ t\ge T$ from the definition of $T$ and $\frac{1}{\log(1+x)} \le \frac{2}{x}$ for all $ x \in (0,1)$.
		\end{proof}

\bibliographystyle{abbrv}
\bibliography{citations.bib}
		 
\end{document}